\documentclass{article}

\usepackage[title]{appendix}
\usepackage{amsthm,amsmath,amssymb,amsfonts}
\usepackage{bbold}
\usepackage{xcolor}
\usepackage{algorithm}
\usepackage{algorithmicx}
\usepackage{algpseudocode}
\usepackage{ulem}

\usepackage{geometry}
\geometry{hscale=0.7,vscale=0.7,centering}
\usepackage{comment}
\usepackage{caption}
\usepackage{array}
\usepackage{makecell}
\usepackage{longtable}
\usepackage{tabularx}
\usepackage{capt-of}
\usepackage{multirow}
\usepackage[justification=justified]{caption}
\usepackage{lmodern}
\pagenumbering{arabic}
\usepackage{listings}
\usepackage{calrsfs}
\widowpenalty=10000
\clubpenalty=10000
\newcommand{\newpar}{\par\medskip}
\usepackage{dsfont}
\allowdisplaybreaks
\usepackage{here}
\usepackage{graphicx}
\usepackage{subcaption}
\RequirePackage[numbers]{natbib}
\usepackage[bookmarks,colorlinks=true,linkcolor=blue,urlcolor=red]{hyperref}
\RequirePackage{hypernat}

\usepackage{easyeqn}

\usepackage{authblk}



\def\bA{\boldsymbol{A}}

\def\bD{\boldsymbol{D}}

\def\bI{\boldsymbol{I}}

\def\bL{\boldsymbol{L}}
\def\bM{\boldsymbol{M}}

\def\bP{\boldsymbol{\mathrm{P}}}

\def\bS{\boldsymbol{S}}
\def\bT{\boldsymbol{T}}
\def\probLeftEigenvectors{\boldsymbol{U}}
\def\probRightEigenvectors{\boldsymbol{V}}
\def\bW{\boldsymbol{W}}
\def\bX{\boldsymbol{X}}
\def\bY{\boldsymbol{Y}}
\def\bZ{\boldsymbol{Z}}
\def\bPi{\boldsymbol{\Pi}}

\def\ba{\mathbf{a}}

\def\be{\mathbf{e}}

\def\bs{\mathbf{s}}

\def\probLeftEigenvectors{\mathbf{u}}
\def\probRightEigenvectors{\mathbf{v}}

\newcommand{\E}{\operatorname{\mathbb{E}}}
\renewcommand{\P}{\operatorname{\mathbb{P}}}

\def\T{\mathrm{T}}
\def\t{\times}

\def\nsize{n}
\def\RR{\mathbb{R}}

\def\PP{\mathbb{P}}

\def\ev{\mathbf{e}}

\def\gv{\mathbf{g}}
\def\hv{\mathbf{h}}


\def\ntopics{K}

\def\documentTopicMatrix{\bW}

\def\probLeftEigenvectors{\mathbf{U}}
\def\probRightEigenvectors{\mathbf{V}}
\def\probEigenvalues{\mathbf{L}}
\def\empiricalLeftEigenvectors{\hat{\probLeftEigenvectors}}
\def\empiricalRightEigenvectors{\hat{\probRightEigenvectors}}
\def\adjacencyEigenvalues{\hat{\probEigenvalues}}
\def\noiseMatrix{\mathbf{N}}

\def\documentTopicMatrixEstimate{\hat{\documentTopicMatrix}}

\def\orthMatrix{\mathbf{O}}
\def\errorAdjacency{\beta(\bX, \bPi)}
\def\condNumber{\kappa}

\def\targetMatrix{\mathbf{G}}
\def\basisMatrix{\mathbf{H}}

\def\noisyTargetMatrix{\tilde{\targetMatrix}}
\def\basisMatrixEstimate{\hat{\mathbf{H}}}

\newtheorem{theorem}{Theorem}
\newtheorem{lemma}{Lemma}
\newtheorem{corollary}{Corollary}
\newtheorem{proposition}{Proposition}
\newtheorem{assumption}{Assumption}

\newtheorem{remark}{Remark}

\def\argmax{\mathop{\rm arg\,max}}

\def\det{\mathop{\rm det}}
\def\diag{\mathop{\rm diag}\nolimits}



\algrenewcommand\algorithmicrequire{\textbf{Input:}}
\algrenewcommand\algorithmicensure{\textbf{Output:}}

\newcommand\numberthis{\addtocounter{equation}{1}\tag{\theequation}}

\title{Assigning Topics to Documents by Successive Projections}

\providecommand{\keywords}[1]
{
  \small	
  \textbf{\textit{Keywords---}} #1
}
\author[1,3]{Olga Klopp}
\author[2]{Maxim Panov}
\author[3]{Suzanne Sigalla}
\author[3]{ Alexandre B. Tsybakov}
\affil[1]{ESSEC Business School, France}
\affil[2]{Skoltech, Russia}
\affil[3]{CREST, ENSAE, France}

\begin{document}

\maketitle

\begin{abstract}
    Topic models provide a useful tool to organize and understand the
    structure of large corpora of text documents, in particular, to discover hidden
    thematic structure.  Clustering documents from big unstructured corpora into topics 
    is an important task in various areas, such as image analysis, e-commerce, social networks, population
    genetics.  A common approach to topic modeling is to associate each topic with a probability distribution
    on the dictionary of words and to consider each document as a mixture of topics.
    Since the number of topics is typically substantially smaller than the size of the corpus
    and of the dictionary, the methods of topic modeling can lead to a dramatic dimension reduction.
    In this paper, we study the problem of estimating topics distribution for each
    document in the given corpus, that is, we focus on the clustering aspect of the problem.
    We introduce an algorithm that we call Successive  Projection  Overlapping  Clustering (SPOC) inspired by the Successive Projection Algorithm
    for separable matrix factorization.  
    This algorithm is simple to implement and  computationally fast. We establish theoretical guarantees 
    on the performance of the SPOC algorithm, in particular, near matching minimax upper and lower bounds on its estimation risk.
    We also propose a new method that estimates the number of topics. We complement our theoretical results with a numerical study on synthetic and semi-synthetic data to analyze the performance of this new algorithm in practice.
    One of the conclusions is that the error of the algorithm grows at most logarithmically
    with the size of the dictionary, in contrast to what one observes for Latent Dirichlet Allocation. 
\end{abstract}
\keywords{Topic model, latent variable model, nonnegative matrix factorization, adaptive estimation}

\section{Introduction}
\label{sec:intro}
  Assigning  topics to documents is an important task in several applications.
  For example, press agencies need to identify articles of interest to readers based on the topics of articles that they have read in the past.
  Analogous goals are pursued by many other text-mining applications such as, for example, recommending blogs from among
  the millions of blogs available.
  A popular approach to the problem of estimating hidden thematic structures in a corpus of documents is based on topic modeling. 
  Topic models have
  attracted a great deal of attention in the past two decades. Beyond text mining, they were used in areas, such as population genetics~\cite{bicego2012investigating,pritchard2000inference}, social networks~\cite{mccallum2005author,curiskis2020evaluation}, image analysis~\cite{li2010building,zhu2017scene}, e-commerce~\cite{palese2018relative,yuan2018topic}.


  In this paper, we adopt the \textit{probabilistic Latent Semantic Indexing} (pLSI) model introduced in~\cite{hofmann1999probabilistic}. The pLSI model deals with three types of variables, namely, documents, topics and words. Topics are latent variables, while the observed variables  are words and documents.  Assume that we have a dictionary of $p$ words and a collection of $n$ documents.  Documents are sequences of words from the dictionary. The number of topics is denoted by~$K$. Usually, $K\ll \min(p,n)$. Throughout this paper, we assume that $2\le K\le \min(p,n)$. The pLSI model assumes that the probability of occurrence of word $j$ in a document discussing topic $k$ is independent of the document. Therefore, by the total probability formula, 
  \begin{align*}
    \mathbb{P}(\text {word } j | \text {document } i) = \sum_{k = 1}^{K} \mathbb{P}(\text {topic } k | \text {document } i) \mathbb{P}(\text {word } j | \text {topic } k).
  \end{align*}
  Introducing the notation $\Pi_{ij}:=\mathbb{P}(\text {word } j | \text {document } i)$, $W_{ik} := \PP(\text{topic }k\rvert\text{document }i)$ and $A_{kj} := \PP(\text{word }j\rvert\text{topic } k)$ we may write $\Pi_{ij} = W_{i}^\T A_{j}$, where $W_{i}=(W_{i1}, \dots, W_{iK})^\T \in \left[0,1\right]^K$ is the topic probability vector for  document $i$ and $A_{j}=(A_{1j}, \dots, A_{Kj})^\T\in \left[0,1\right]^K$ is the vector of word $j$ probabilities under topics $k=1,\dots, K$. In matrix form, 
  \begin{align}\label{factoriz}
    \bPi = \bW \bA,
  \end{align}
  where $\bPi$ is the document-word matrix of size $n\times p$ with entries $\Pi_{ij}$, $\bW:= \left(W_{1}, \dots, W_{n} \right)^\T$ is the document-topic matrix of size $n\times K$ and $\bA := \left(A_{1}, \dots, A_{p} \right)$ 
  is the topic-word matrix of size $K\times p$. The rows of these matrices are probability vectors, 
  \begin{align}\label{factoriz1}
  \sum_{m = 1}^K W_{im}=1,~\sum_{j = 1}^p A_{kj}=1,~\sum_{j=1}^p \Pi_{ij}=1~\text{for any }i= 1,\dots,n,~k= 1,\dots K.
  \end{align} 
  Unless otherwise stated, we will assume throughout the paper that $\bPi, \bW, \bA$ are matrices with non-negative entries satisfying~\eqref{factoriz1}. 

  The value $\Pi_{ij}$ is the probability of occurrence of word $j$ in document $i$. It is not available but we have access to the corresponding empirical frequency $X_{ij}$. 
  Thus, we have a document-word matrix $\bX=(X_{ij})$ of size $n \times p$ such that for each document $i$ in $1,\dots, n,$ and each word $j$ in $1,\dots, p$, the entry $X_{ij}$ is the observed frequency of word $j$ in document $i$.  Let $N_i$   denote the (non-random) number of sampled words in document $i$.
  In what follows, we assume that, for each document-word vector $X_i = (X_{i1}, \dots,X_{ip})^\T$,  the corresponding vector of cumulative counts $N_i X_i$ follows a $\text{Multinomial}_p(N_i,\Pi_i)$ distribution,
  where $\Pi_i := \E(X_i) = (\Pi_{i1}, \dots,\Pi_{ip})^\T$. We also assume that $X_1,\dots,X_n$ are independent random vectors. We will denote by $\mathbb{P}_{\bPi}$
  the probability measure corresponding to the distribution of $\bX$.
  
  We can write the observation model in a ``signal + noise" form:
  \begin{align}
  \label{eq:anchor_docs_model}
    \bX = \bPi + \bZ = \bW \bA + \bZ,
  \end{align}
  where $\bZ := \bX - \bPi$ is a zero mean noise. 
  The objective in topic modeling is to estimate  matrices $\bA$ and $\bW$ based on the observed frequency matrix $\bX$ and on the known $N_1, \dots, N_n$. The recovery of $\bA$ and the recovery of $\bW$ address different purposes. An estimator of matrix $\bA$ identifies the topic distribution on the dictionary. An estimator of $\bW$ indicates the topics associated to each document.
  
  Estimation of $\bW$ has multiple applications and has been extensively discussed in the literature, mainly in the Bayesian perspective. The  focus was on Latent Dirichlet Allocation (LDA) and related techniques (see Section~\ref{sec:comparison} for more details and references).
  These methods are computationally slow and, to the best of our knowledge, no theoretical guarantees on their performance are available.

  On the other hand, estimation of matrix $\bA$ is well-studied in the theory. Several papers provide bounds on the performance of different estimators of $\bA$. We give a more detailed account on this work in Section~\ref{sec:comparison}.  Most of the results use the \textit{anchor word assumption} postulating that for every topic there is at least one word, which occurs only in this topic, see~\cite{arora2013practical,bing2018fast,Ke2017}. At first sight, it seems that results on estimation of matrix $\bA$ can be applied to estimation of $\bW$ by simply taking the transpose of~\eqref{factoriz1} and inverting the roles of these two matrices. However, such an argument is not valid since the resulting models are different.  Indeed, the rows of matrix $\bX^\T$ are not independent and the rows of matrices $\bPi^\T, \bA^\T, \bW^\T$ do not sum up to 1, which leads to a different statistical analysis. 
  
In the present paper, we change the framework compared to ~\cite{arora2013practical,bing2018fast,Ke2017} by focusing on estimation of matrix $\bW$ rather than $\bA$. We introduce the following assumption. 
   \begin{assumption}[Anchor document assumption] 
  \label{ass:anchorDoc}
    For each topic \(k = 1, \dots, \ntopics,\) there exists at least one document \(i\) (called an anchor document) such that \(W_{ik} = 1\) and \(W_{il} = 0\) for all \(l \not= k\).
  \end{assumption}
  {Both anchor word and anchor document assumptions are very relevant in real word applications.}
Since each document is identified with a mixture of $K$ topics, anchor document assumption means that, for each topic, there is a document devoted solely to this topic. To illustrate the anchor document assumption, consider
the Associated Press data set~\cite{Harman1993}, which is a collection of 2246 articles published by this press agency mostly around 1988. An application of the pLSI model fitted via the LDA method with $K = 2$ leads to two well-shaped topics ``finance'' and ``politics''. We refer to~\cite[Section~6]{Silge2020} 
for the details of the analysis. The first 10 rows of the estimator of matrix \(\bW\) are presented in Table~\ref{tab:associated_matrix}.  Notice that documents 6 and 8 in Table~\ref{tab:associated_matrix} can be considered as anchor documents. For example, document 6 has the weight of the second topic estimated as \(0.999\). A closer look at the most frequent words in this document (Noriega, Panama, Jackson, Powell, administration, economic, general) tells us that, indeed, this article corresponds solely to the topic ``politics'' -- it is about the relationship between the American government and the Panamanian leader Manuel Noriega. 

\begin{table}[t]
  \centering
  \captionof{table}{The first ten rows of the estimated matrix \(\bW\) for the Associated Press data set.}  
  \begin{tabular}{|c|c|c|}
    \hline
    Document & Finance & Politics \\
    \hline
    \hline
    1  & 0.248 & 0.752 \\   
    2  & 0.362 & 0.638 \\  
    3  & 0.527 & 0.473 \\  
    4  & 0.357 & 0.643 \\ 
    5  & 0.181 & 0.819 \\
    6  & 0.001 & 0.999 \\
    7  & 0.773 & 0.227 \\ 
    8  & 0.004 & 0.996 \\
    9  & 0.967 & 0.033 \\
    10 & 0.147 & 0.953 \\
    \hline
  \end{tabular}
  \label{tab:associated_matrix}
\end{table}

  
  Our approach to estimation of matrix $\bW$ that we call Successive  Projection  Overlapping  Clustering (SPOC) is inspired by the Successive Projection Algorithm (SPA) initially proposed for non-negative matrix factorization~\cite{Araujo2001} and further used by~\cite{Gillis2014,panov2017consistent,Mao2020} in the context of mixed membership stochastic block models. 
  The idea of such methods is to start with the singular value decomposition (SVD) of matrix $\bX$, and launch an iterative procedure that, at each step,  chooses the maximum norm row of the matrix composed of singular vectors  
  and then projects on the linear subspace orthogonal to the selected row. From a geometric perspective, the rows of the matrix composed of singular vectors of $\bPi$ belong to a simplex in $\RR^K$. The documents can be identified with some points in this simplex and the anchor documents with its vertices. Our algorithm iteratively finds estimators of the vertices, based on which we finally estimate $\bW$.
  
  Note that the idea of exploiting simplex structures for estimation of matrix $\bA$ rather than $\bW$ was previously developed in, for example, ~\cite{arora2013practical,Ding2013,Ke2017}, among others. For example, the method to estimate $\bA$ suggested in~\cite{Ke2017} is based on an exhaustive search over all size $K$ subsets of $\{1,\dots, p\}$.   Its goal is  to select $K$ vertices of a $p$-dimensional simplex and its computational cost is at least of the order $p^K$. Our algorithm for estimating $\bW$ recovers the vertices of much less complex object, which is a $K$-dimensional simplex (recall that $K\ll p$), and its computational cost is of the order $\max(p, n) K + nK^2$. Here, $\max(p, n) K$ and $n K^2$ are the costs of performing a truncated singular value decomposition and the SPA algorithm, respectively. Another important point is that existing simplex-based methods for estimation of matrix $\bA$ require  the number $K$ of topics to be known. In the present paper, we propose a procedure that is adaptive to 
  unknown $K$. 
  
  {Our theoretical results deal only with the problem of estimating the topic-document matrix $\bW$, for which the theory was not developed in prior work. But in practice, our method can be used for estimation of matrix $\bA$ as well. Based on the SPOC estimator of $\bW$, we immediately obtain an estimator of matrix $\bA$ by a computationally fast procedure (see Section \ref{sec:anchor}). Our simulation studies (see Section~\ref{sec:topic_word_experiments} of the Appendix) indicate that this estimator exhibits a behavior similar to LDA on average while being more stable.}
  
  
  
  {In this paper, we prove that the SPOC estimator of $\bW$ converges in the Frobenius norm and in the $\ell_1$-norm with the rates $\sqrt{n/N}$ and $n/\sqrt{N}$ (up to a weak factor\footnote{In what follows, we mean by {\it weak factor} a small power of $K$ multiplied by a term logarithmic in the parameters of the problem. We will ignore weak factors when discussing the convergence rates.}), respectively, assuming that  $N_i=N$ for $i=1,\dots,n$. We also prove lower bounds of the order $\sqrt{n/N}$ and $n/\sqrt{N}$, respectively, implying near optimality of the proposed method. 
  } One of the conclusions, both from the theory and the numerical experiments, is that the error of the SPOC algorithm does not grow significantly with the size of the dictionary $p$, in contrast to what one observes for Latent Dirichlet Allocation.  We also introduce an estimator for the number $K$ of topics, which is usually unknown in practice. We show that SPOC algorithm using the estimator of $K$ preserves its optimal properties in this more challenging setting.
  
  
  

  The rest of the paper is organized as follows. 
  In Section~\ref{sec:anchor},  we introduce the SPOC algorithm.  Section~\ref{sec:consistency} contains the main results on the convergence rate of the algorithm and the minimax lower bound for estimation of \(\bW\).  Section~\ref{sec:comparison} is devoted to discussion of the prior work.  In Section~\ref{sec:simulations}, we present numerical experiments for synthetic and real-world data in order to illustrate our theoretical findings. Finally, in Section~\ref{sec:discussion} we summarize the outcomes of the study. The proofs are given in the Appendix.

\section{Notation}
\label{sec:notation}
  For any matrix $\bM=(M_{ij})\in \mathbb{R}^{n \times k}$, we denote by $\|\bM\|$ its spectral norm, i.e., its maximal singular value, by $\|\bM\|_F$ its Frobenius norm, and by $\|\bM\|_1=\sum_{i = n}^k\sum_{j = 1}^k\vert M_{ij}\vert$ its $\ell_1$-norm. We also consider the maximum $\ell_1$-norm of its rows $\|\bM\|_{1,\infty} =\underset{1 \leq i \leq n}{\max} \sum_{j = 1}^k\vert M_{ij}\vert$. We denote by $\lambda_j(\bM)$ the $j$th singular value and by $\lambda_{\min}(\bM)$ the smallest  singular value of $\bM$. Assuming that matrix $\bM$ has rank $K$ we consider its singular values $\lambda_1(\bM) \geq \lambda_2(\bM) \geq \dots \geq \lambda_K(\bM)>0$ and its condition number $\kappa(\bM) = {\lambda_1(\bM)}/{\lambda_K(\bM)}$.  If  $J$ is a non-empty subset of rows of matrix $\bM$ the notation $\bM_J$ is used for a matrix in $\mathbb{R}^{|J| \times k}$ obtained from $\bM$ by keeping only the rows in $J$. We denote by $\bI_K$ the $K\times K$ identity matrix, and by $(\be_1,\dots,\be_n)$ the canonical basis of $\RR^n$.  For any vector $u\in \RR^d$, we denote by $\|u\|_2$ its Euclidean norm. Throughout the paper, we use the notation $\orthMatrix$
  for orthogonal matrices and $\bP$ for permutation matrices. We denote by $\mathcal{P}$ the set of all permutation matrices in $\RR^{\ntopics \t \ntopics}$. We denote by $c, C$ positive constants than may vary from line to line 

\section{Successive Projection Overlapping Clustering}
\label{sec:anchor}
  In this section, we introduce the \textit{Successive Projection Overlapping Clustering (SPOC)} algorithm for estimation of matrix \(\bW\). It is an analog, in the context of topic models, of the algorithm  proposed in~\cite{panov2017consistent} for the problem of parameter estimation in Mixed Membership Stochastic Block Model.

  In order to explain the main idea of the algorithm, we start with considering the singular value decomposition (SVD) of matrix \(\bPi\):
  \begin{EQA}[c]
  \label{svc:Pi}
    \bPi = \probLeftEigenvectors \probEigenvalues \probRightEigenvectors^{\T},
  \end{EQA}
  where \(\probLeftEigenvectors = \left[U_{1}, \dots, U_{\ntopics}\right] \in \RR^{n \t \ntopics}\) and \(\probRightEigenvectors = \left[V_{1}, \dots, V_{\ntopics}\right] \in \RR^{p \t \ntopics}\) are the matrices of left and right singular vectors and \(\probEigenvalues \in \RR^{\ntopics \t \ntopics}\) is a diagonal matrix of the corresponding singular values. We have used here the fact that  the rank of matrix \(\bPi\) is at most~$K$. Recall that we assume $K\le \min(p,n)$. A key observation is that, if $\lambda_K(\bPi)>0$
  and Assumption~\ref{ass:anchorDoc} is satisfied, then matrix \(\probLeftEigenvectors\) can be represented as 
  \begin{equation}
  \label{UWH}
    \probLeftEigenvectors = \bW \bf H,
  \end{equation}
  where \({\bf H}\in  \RR^{\ntopics \t \ntopics}\) is a full rank matrix (cf. Lemma~\ref{lemma:matrix_eigen0} in the Appendix).
  Thus, the rows of matrix \(\probLeftEigenvectors\) belong to a simplex in $\RR^K$ with vertices given by the rows of matrix \(\bf H\).

  The empirical counterparts of $\bf{U,L,V}$ are obtained from the SVD of the observed matrix \(\bX\):
  \begin{EQA}[c]\label{svc:X}
    \bf \bX = \empiricalLeftEigenvectors \adjacencyEigenvalues \widehat{\probRightEigenvectors}^{\T} + \empiricalLeftEigenvectors_1 \adjacencyEigenvalues_1 \widehat{\probRightEigenvectors}_1^{\T},
  \end{EQA}
  where $\empiricalLeftEigenvectors = [\widehat{U}_{1}, \dots, \widehat{U}_{\ntopics}]$ and $\widehat{\probRightEigenvectors} = [\widehat{V}_{1}, \dots, \widehat{V}_{\ntopics}]$ are, respectively, the matrices of left and right singular vectors of $\bX$ corresponding to its $K$ leading singular values $\widehat{\lambda}_{1} \geq \dots \geq \widehat{\lambda}_{\ntopics}$; $\widehat{\probEigenvalues} = \diag\{\widehat{\lambda}_{1}, \dots, \widehat{\lambda}_{\ntopics}\}$, and $\empiricalLeftEigenvectors_1 \adjacencyEigenvalues_1 \widehat{\probRightEigenvectors}_1^{\T}$ is the singular value decomposition of $\bX - \empiricalLeftEigenvectors \adjacencyEigenvalues \widehat{\probRightEigenvectors}^{\T}$.
  Due to matrix perturbation results (see Appendix~\ref{subsec:perturbation}), there exists an orthogonal matrix ${\bf O}$ such that $\empiricalLeftEigenvectors$ is a good approximation for $\probLeftEigenvectors {\bf O}$.
  We can write
  \begin{EQA}[c]\label{eq:umodel}
    \empiricalLeftEigenvectors = \probLeftEigenvectors {\bf O} + {\bf N} = \bW \bf H {\bf O} + {\bf N},
  \end{EQA}
  where ${\bf N}$ is a "small enough" noise matrix. 
Having obtained $\empiricalLeftEigenvectors$ from the SVD of $\bX$, we then apply the \textit{Successive Projection Algorithm} (SPA)~\cite{Araujo2001,Gillis2014} to estimate the matrix \(\bf H {\bf O}\) in model~\eqref{eq:umodel}. 

  \begin{algorithm}
    \caption{SPA}
    \begin{algorithmic}[1]
      \Require{Matrix \(\bM\in \mathbb{R}^{n\times K}\) and integer \(r\le n\).}
      \Ensure{Set of indices $J\subseteq \{1,\dots,n\}$.}
      \State Initialize: $\bS_0=\bM^{\T}$, $J_0=\emptyset$.
      \State For $t=1,\dots,r$ do:\newline
      -- Find $i(t) = \argmax_{i=1,\dots,n}\|\bs_i\|_2$, where $\bs_i$'s are the column vectors  of $\bS_{t-1}$.\newline
      -- Set 
      $
      \bS_{t}= \left(\bI_K - \frac{\bs_{i(t)}\bs_{i(t)}^\T}{\|\bs_{i(t)}\|_2^2}\right)\bS_{t-1} 
      $,
      $\quad J_t=J_{t-1}\cup \{i(t)\}$.
      \State Set $J=J_r$.
    \end{algorithmic}
  \label{algo:SPA}
  \end{algorithm}
  Applied to matrix $\bM =\empiricalLeftEigenvectors$ and $r=K$ 
  this algorithm  finds the rows of matrix \(\empiricalLeftEigenvectors\) with the maximum Euclidean norm and then projects on the subspace orthogonal to these rows and repeats the procedure until $K$ rows are selected. The main idea underlying the SPA is that the maximum of the Euclidean norm of a vector on a simplex is attained at one of its vertices.

  In the noiseless case when ${\bf N}=0$, it can be shown  that  if Assumption~\ref{ass:anchorDoc} holds then 
   $\hat{\bf U}_J={\bf H} {\bf O}$, where $J$ is the set of $K$ rows of \(\hat{\bf U}\) selected after $K$ steps of SPA. 
  In the noisy case, we need additional assumptions on  the noise level to ensure that SPA extracts documents close to anchor ones, which leads to an accurate enough estimator $\hat{\bf H}$ of $\bf H {\bf O}$ (see Appendix~\ref{sec:consistencySPA} for the precise statement). Once we have such an  \(\hat{\bf H}\), the final step is to define the following estimator of matrix \(\bW\):
  \begin{EQA}[c]
    \documentTopicMatrixEstimate = \empiricalLeftEigenvectors \hat{\bf H}^{-1}.
  \end{EQA}
  This definition is valid only if matrix $\hat{\bf H}$ is non-degenerate, which is true with high probability under suitable assumptions (cf. Section~\ref{sec:consistency}).
  
  An additional potentially useful step is to apply preconditioning to matrix \(\empiricalLeftEigenvectors\), which leads to improved bounds on the performance of the algorithm in the presence of noise, see~\cite{Gillis2015,Mizutani2016}. Preconditioned SPA  is defined as follows. Let $r=K$ and let $\ba_1,\dots,\ba_n$ be the column vectors of matrix $\bM^\T$. Let $\bL^*\in \mathbb{R}^{K\times K}$ be the solution of the minimization problem 
  \begin{equation}
    \min_{\bL\succ 0:\ \ \max_i \ba_i^\T\bL\ba_i\le 1} - \log \det \bL .
  \end{equation}
  Matrix $\bL^*$ defines the minimum volume ellipsoid centered at the origin that contains $\ba_1,\dots,\ba_n$.
  The preconditioned SPA is defined by Algorithm~\ref{algo:SPA} initialized with $\bS_0=(\bL^*)^{1/2}\bM^\T$
  rather than with $\bS_0=\bM^\T$. 
  
  The SPOC algorithm for topic modeling is summarized in Algorithm~\ref{algo:SPOC}.

  \begin{algorithm}
    \caption{SPOC (respectively, preconditioned SPOC)}
    \begin{algorithmic}[1]
      \Require{Observed matrix \(\bX\) and number of topics \(\ntopics\).}
      \Ensure{Estimated document-topic matrix \(\hat{\bW}\).}
      \State Get the rank \(\ntopics\) SVD of  \(\bX\colon \empiricalLeftEigenvectors \adjacencyEigenvalues \empiricalRightEigenvectors^{\T}\).
      \State Run SPA (respectively, preconditioned SPA) with input \((\empiricalLeftEigenvectors, \ntopics)\), which outputs a set of indices \(J\) with cardinality \(\ntopics\).
      \State Set \(\hat{\bf H}: = \empiricalLeftEigenvectors_J\).
      \State Set \(\documentTopicMatrixEstimate: = \empiricalLeftEigenvectors \hat{\bf H}^{-1}\).
    \end{algorithmic}
  \label{algo:SPOC}
  \end{algorithm}
{Based on the SPOC estimator $\hat{\bW}$ of matrix $\bW$, it is possible to construct an estimator for matrix $\bA$ in a straightforward way. Indeed, given the decompositions \eqref{svc:Pi} and \eqref{UWH}, we can use the definition $\bPi = \bW \bA$ and deduce that $\bA = \bf{H} \probEigenvalues \probRightEigenvectors^{\T}$. A  direct sample-based estimator of~$\bA$ is then given by 
  \begin{equation}
  \label{A_estimate}
    \hat {\bA}= \hat{\bf{H}} \adjacencyEigenvalues \widehat{\probRightEigenvectors}^{\T}.
  \end{equation}
  In order to illustrate the usefulness of this estimator, we have performed its experimental comparison with LDA, see  Section~\ref{sec:topic_word_experiments} of the Appendix.}

\section{Main results}
\label{sec:consistency}
In this section, we provide bounds on the performance of SPOC algorithm. We first prove deterministic bounds assuming that $\bX$ is some fixed matrix close enough to $\bPi$ in the spectral norm. Next, we combine these results with a concentration inequality for $\|\bX - \bPi\|$ when $\bX$ is distributed according to $\mathbb{P}_{\bPi}$ in order to obtain a bound on the estimation error with high probability under our statistical model.

\subsection{Deterministic bounds}
  A key step in analyzing the performance of SPOC algorithm is to show that $\hat{\bf U}$ is close to an orthogonal transformation $\probLeftEigenvectors \orthMatrix$ of the population matrix $\probLeftEigenvectors$. 
  The next lemma gives a bound on the maximal \(\ell_2\)-distance between the rows of \(\empiricalLeftEigenvectors\) and \(\probLeftEigenvectors \orthMatrix\) for some orthogonal matrix \(\orthMatrix\),
  which will allow us to deduce an upper bound on the error  of SPA from the results of~\cite{Gillis2015,Mizutani2016} (see Appendix~\ref{sec:consistencySPA} for the details). 
  We state this lemma as a deterministic result where $\bX$ is some fixed matrix close enough to $\bPi$ in the spectral norm.
  Recall that  \(\lambda_{1}(\documentTopicMatrix)\) is the maximum singular value of matrix \(\documentTopicMatrix\), $\lambda_{\ntopics}( \bPi)$ is the $K$th singular value of matrix $\bPi$, and \(\condNumber(\documentTopicMatrix)\), \(\condNumber(\bPi)\) are the condition numbers of matrices \(\documentTopicMatrix\) and $\bPi$, respectively. 
  Assuming that $\lambda_{\ntopics}(\bPi)>0$ (that is, $\bPi$ is a rank $K$ matrix) we define the values 
  \begin{EQA}[c]
    \beta_{i}(\bX, \bPi)
    =
    \ntopics^{1/2} \condNumber^2(\bPi) ~ \frac{\|\ev_{i}^{\T} \bX\|_{2} \|\bX - \bPi\|}{\lambda_{\ntopics}^{2}(\bPi)}
    +
    \frac{\|\ev_{i}^{\T}(\bX - \bPi) \|_{2}}{\lambda_{\ntopics}(\bPi)},\quad i=1,\dots,n,
  \end{EQA}
  where 
  $(\ev_1, \dots, \ev_n)$ is the canonical basis of $\mathbb{R}^n$.
  \begin{lemma}
  \label{lemma:rowFactorBound}
    Assume that $\bPi \in \RR^{\nsize \t p}$ is a rank \(\ntopics\)
    matrix, 
    and
    $\bX \in \RR^{\nsize \t p}$ is any  matrix such that
    \(\|\bX - \bPi\| \le \frac{1}{2} \lambda_{\ntopics}(\bPi)\). Let \(\empiricalLeftEigenvectors, \probLeftEigenvectors\) be the \(\nsize \t \ntopics\) matrices of left singular vectors
    corresponding to the top \(\ntopics\) singular values of \(\bX\) and \(\bPi\), respectively. Then, there exist an orthogonal matrix \(\orthMatrix\) and a constant $C>0$ such that, for any $i=1,\dots,n$,
    \begin{EQA}
    \label{eq:rowConcentr}
      \|\ev_{i}^{\T}(\empiricalLeftEigenvectors - \probLeftEigenvectors \orthMatrix)\|_{2}
      &\le&
      C \beta_{i}(\bX, \bPi).
    \end{EQA}
  \end{lemma}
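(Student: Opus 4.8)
The plan is to reduce this row-wise statement to a single spectral-norm estimate on a $K\times K$ auxiliary matrix, exploiting the one-sided identities $\empiricalLeftEigenvectors = \bX\empiricalRightEigenvectors\adjacencyEigenvalues^{-1}$ and $\probLeftEigenvectors = \bPi\probRightEigenvectors\probEigenvalues^{-1}$, which hold because $\bX\empiricalRightEigenvectors = \empiricalLeftEigenvectors\adjacencyEigenvalues$ and $\bPi\probRightEigenvectors = \probLeftEigenvectors\probEigenvalues$. I would fix the orthogonal matrix $\orthMatrix$ supplied by the perturbation results of Appendix~\ref{subsec:perturbation}, chosen so that \emph{simultaneously} $\empiricalLeftEigenvectors\approx\probLeftEigenvectors\orthMatrix$ and $\empiricalRightEigenvectors\approx\probRightEigenvectors\orthMatrix$. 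Then, using $\probLeftEigenvectors\orthMatrix = \bPi\probRightEigenvectors\probEigenvalues^{-1}\orthMatrix$ and adding and subtracting $\bX\probRightEigenvectors\probEigenvalues^{-1}\orthMatrix$, I obtain the exact decomposition
\[
  \empiricalLeftEigenvectors - \probLeftEigenvectors\orthMatrix = \bX\bigl(\empiricalRightEigenvectors\adjacencyEigenvalues^{-1} - \probRightEigenvectors\probEigenvalues^{-1}\orthMatrix\bigr) + (\bX-\bPi)\probRightEigenvectors\probEigenvalues^{-1}\orthMatrix .
\]
Reading off the $i$-th row and applying the triangle inequality isolates the two summands of $\beta_{i}(\bX,\bPi)$.

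The second summand is immediate: since $\probRightEigenvectors$ has orthonormal columns, $\orthMatrix$ is orthogonal, and $\|\probEigenvalues^{-1}\| = 1/\lambda_{\ntopics}(\bPi)$, I get $\|\ev_i^\T(\bX-\bPi)\probRightEigenvectors\probEigenvalues^{-1}\orthMatrix\|_2 \le \|\ev_i^\T(\bX-\bPi)\|_2/\lambda_{\ntopics}(\bPi)$, which is exactly the second term of $\beta_i$. For the first summand I factor $\|\ev_i^\T\bX\bigl(\empiricalRightEigenvectors\adjacencyEigenvalues^{-1}-\probRightEigenvectors\probEigenvalues^{-1}\orthMatrix\bigr)\|_2 \le \|\ev_i^\T\bX\|_2\,\|\empiricalRightEigenvectors\adjacencyEigenvalues^{-1}-\probRightEigenvectors\probEigenvalues^{-1}\orthMatrix\|$, so the whole proof reduces to the key spectral estimate
\[
  \bigl\|\empiricalRightEigenvectors\adjacencyEigenvalues^{-1}-\probRightEigenvectors\probEigenvalues^{-1}\orthMatrix\bigr\| \;\le\; C\,\ntopics^{1/2}\condNumber^2(\bPi)\,\frac{\|\bX-\bPi\|}{\lambda_{\ntopics}^2(\bPi)} .
\]

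To prove this estimate I would split $\empiricalRightEigenvectors\adjacencyEigenvalues^{-1}-\probRightEigenvectors\probEigenvalues^{-1}\orthMatrix = (\empiricalRightEigenvectors-\probRightEigenvectors\orthMatrix)\adjacencyEigenvalues^{-1} + \probRightEigenvectors\probEigenvalues^{-1}(\probEigenvalues\orthMatrix-\orthMatrix\adjacencyEigenvalues)\adjacencyEigenvalues^{-1}$. Throughout, Weyl's inequality with the hypothesis $\|\bX-\bPi\|\le\tfrac12\lambda_{\ntopics}(\bPi)$ gives $\lambda_{\ntopics}(\bX)\ge\tfrac12\lambda_{\ntopics}(\bPi)$, hence $\|\adjacencyEigenvalues^{-1}\|\le 2/\lambda_{\ntopics}(\bPi)$, and $\|\bX\|\le\tfrac32\lambda_1(\bPi)$. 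The first piece is controlled by a $\sin\Theta$/Procrustes bound on the right singular subspace, namely $\|\empiricalRightEigenvectors-\probRightEigenvectors\orthMatrix\|$, from the perturbation lemma. For the second piece I use the exact identity $\adjacencyEigenvalues = \empiricalLeftEigenvectors^\T\bX\empiricalRightEigenvectors$ and substitute $\empiricalLeftEigenvectors = \probLeftEigenvectors\orthMatrix + (\empiricalLeftEigenvectors-\probLeftEigenvectors\orthMatrix)$ and $\empiricalRightEigenvectors = \probRightEigenvectors\orthMatrix + (\empiricalRightEigenvectors-\probRightEigenvectors\orthMatrix)$; the leading term reproduces $\orthMatrix^\T\probLeftEigenvectors^\T\bX\probRightEigenvectors\orthMatrix = \orthMatrix^\T\bigl(\probEigenvalues+\probLeftEigenvectors^\T(\bX-\bPi)\probRightEigenvectors\bigr)\orthMatrix$, so that $\|\probEigenvalues\orthMatrix-\orthMatrix\adjacencyEigenvalues\|$ is bounded by $\|\bX-\bPi\|$ plus cross terms of size $\|\bX\|\cdot\|\empiricalLeftEigenvectors-\probLeftEigenvectors\orthMatrix\|$ and $\|\bX\|\cdot\|\empiricalRightEigenvectors-\probRightEigenvectors\orthMatrix\|$, i.e. of order $\condNumber(\bPi)\|\bX-\bPi\|$. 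Combining both pieces with the bounds on $\|\adjacencyEigenvalues^{-1}\|$ and $\|\probEigenvalues^{-1}\|$ yields the claimed estimate.

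The delicate point, which I expect to be the main obstacle, is that the \emph{same} orthogonal matrix $\orthMatrix$ must serve at once for the left subspace (in the top-level decomposition) and for the right subspace (in the first piece and in the singular-value mismatch). Producing a single rotation that aligns both families of singular vectors with quantitative control of $\|\empiricalLeftEigenvectors-\probLeftEigenvectors\orthMatrix\|$ and $\|\empiricalRightEigenvectors-\probRightEigenvectors\orthMatrix\|$, rather than two separate Procrustes rotations, is exactly what costs the extra power of $\condNumber(\bPi)$; the factor $\ntopics^{1/2}$ then enters when the available $\sin\Theta$ bound is stated in Frobenius norm and converted to the spectral norm via $\|\cdot\|\le\|\cdot\|_F\le \ntopics^{1/2}\|\cdot\|$ on the $K\times K$ blocks. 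Once the perturbation lemma of Appendix~\ref{subsec:perturbation} furnishes this common $\orthMatrix$, the remainder is bookkeeping with Weyl's inequality and the triangle inequality.
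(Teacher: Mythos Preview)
Your overall strategy---write $\empiricalLeftEigenvectors=\bX\empiricalRightEigenvectors\adjacencyEigenvalues^{-1}$, $\probLeftEigenvectors=\bPi\probRightEigenvectors\probEigenvalues^{-1}$, peel off the term $(\bX-\bPi)\probRightEigenvectors\probEigenvalues^{-1}\orthMatrix$, and reduce the rest to a spectral estimate on a $K\times K$ block---is exactly what the paper does. The difference lies in how the ``main obstacle'' you flag is handled, and here the paper takes a simpler route than the one you outline.

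You insist on a \emph{single} orthogonal matrix $\orthMatrix$ that simultaneously aligns $\empiricalLeftEigenvectors$ with $\probLeftEigenvectors\orthMatrix$ and $\empiricalRightEigenvectors$ with $\probRightEigenvectors\orthMatrix$. The perturbation results of Appendix~\ref{subsec:perturbation} do \emph{not} furnish such a common rotation: Corollary~\ref{lemma:eigenvectorConsistency} produces two \emph{distinct} matrices $\orthMatrix$ (for the left subspace) and $\widetilde{\orthMatrix}$ (for the right subspace), and nothing in that appendix shows $\orthMatrix=\widetilde{\orthMatrix}$ or even that they are close. Your expansion of $\adjacencyEigenvalues=\empiricalLeftEigenvectors^{\T}\bX\empiricalRightEigenvectors$ then needs a bound on $\|\empiricalRightEigenvectors-\probRightEigenvectors\orthMatrix\|$ with the \emph{left} rotation $\orthMatrix$, which is not available. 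So as written there is a genuine gap at precisely the point you identify as delicate.

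The paper sidesteps this entirely by keeping both rotations in play and using a \emph{three}-term telescoping split:
\[
\bX\empiricalRightEigenvectors\adjacencyEigenvalues^{-1}-\bPi\probRightEigenvectors\probEigenvalues^{-1}\orthMatrix
= \bX\empiricalRightEigenvectors\bigl(\adjacencyEigenvalues^{-1}-\widetilde{\orthMatrix}^{\T}\probEigenvalues^{-1}\orthMatrix\bigr)
+ \bX\bigl(\empiricalRightEigenvectors-\probRightEigenvectors\widetilde{\orthMatrix}\bigr)\widetilde{\orthMatrix}^{\T}\probEigenvalues^{-1}\orthMatrix
+ (\bX-\bPi)\probRightEigenvectors\probEigenvalues^{-1}\orthMatrix.
\]
The middle term is controlled by the right-subspace Davis--Kahan bound with its own $\widetilde{\orthMatrix}$; the first term isolates the singular-value mismatch as the \emph{mixed} quantity $\|\adjacencyEigenvalues^{-1}-\widetilde{\orthMatrix}^{\T}\probEigenvalues^{-1}\orthMatrix\|$, which is bounded in Lemma~\ref{corollary:eigenvalues} by essentially the argument you sketch (Weyl plus the two Davis--Kahan bounds), but now each subspace appears with its own rotation. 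The $\ntopics^{1/2}\condNumber^{2}(\bPi)$ factor arises there, not from forcing a common rotation. In short: replace your two-term split by this three-term one and your ``main obstacle'' disappears.
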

  %
  Define now
  \begin{EQA}[c]
    \label{eq:errorDef}
    \beta(\bX, \bPi) = \max_{i =1, \dots, \nsize} \beta_{i}(\bX, \bPi).
  \end{EQA}
  We will need the following condition.
  \begin{assumption} 
    \label{cond:community memberships}For a constant $\bar C>0$ we have
    \begin{EQA}[c]
      \beta(\bX, \bPi) \leq \frac{\bar C}{\lambda_{1}(\bW) \kappa(\bW) {K}\sqrt{K}}.
    \end{EQA} 
  \end{assumption}
  Assumption~\ref{cond:community memberships} is satisfied with high probability for $\bX\sim \mathbb{P}_{\bPi}$ when the sample size $N$ is large enough (see Appendix~\ref{Appendix:proof-of-thm1}). 
  Under Assumption~\ref{cond:community memberships}, we can derive from Lemma~\ref{lemma:rowFactorBound} the following deterministic bound on the error of estimating the topic-document matrix by the SPOC algorithm.  
  \begin{lemma}
    \label{lemma:documenTopicMatrixBound}
    Let Assumptions~\ref{ass:anchorDoc} and~\ref{cond:community memberships} be satisfied with constant $\bar C$ small enough. Assume that $\bPi \in \RR^{\nsize \t p}$ is a rank \(\ntopics\)
    matrix, 
    and
    $\bX \in \RR^{\nsize \t p}$ is any  matrix such that
    \(\|\bX - \bPi\| \le \frac{1}{2} \lambda_{\ntopics}(\bPi)\). Then, {matrix $\hat{\bf{H}}$ is non-degenerate and} the preconditioned SPOC algorithm outputs matrix \(\documentTopicMatrixEstimate\) such that 
    \begin{EQA}[c]
    \min_{\bP\in \mathcal{P}} \|\documentTopicMatrixEstimate - \documentTopicMatrix \bP\|_F \leq C \ntopics^{1/2} \left \{ \lambda^2_{\max}(\documentTopicMatrix) \kappa(\bW) \errorAdjacency + \frac{\kappa(\bPi)\lambda_{1}(\documentTopicMatrix)\|\bX - \bPi\|}{\lambda_{\ntopics}(\bPi)}\right\},
    \end{EQA}
    where $\mathcal{P}$ denotes the set of all permutation matrices.
  \end{lemma}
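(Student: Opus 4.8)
The plan is to split the error $\documentTopicMatrixEstimate-\bW\bP$ into exactly two contributions — the error of the SPA step in recovering the vertex matrix $\bf H\orthMatrix$, and the subspace perturbation $\bf N$ — and to control each through a single algebraic identity, using Lemma~\ref{lemma:rowFactorBound} and the SPA consistency bound of Appendix~\ref{sec:consistencySPA} as the two main inputs.

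\emph{Step 1 (algebraic preliminaries).} By \eqref{UWH} and \eqref{eq:umodel} I write $\empiricalLeftEigenvectors=\probLeftEigenvectors\orthMatrix+\bf N=\bW\bf H\orthMatrix+\bf N$, where $\bf N:=\empiricalLeftEigenvectors-\probLeftEigenvectors\orthMatrix$. Lemma~\ref{lemma:rowFactorBound} gives $\max_i\|\ev_i^\T\bf N\|_2\le C\beta(\bX,\bPi)$, and the Wedin-type bound of Appendix~\ref{subsec:perturbation} (the relevant spectral gap being $\lambda_{\ntopics}(\bPi)-\lambda_{\ntopics+1}(\bPi)=\lambda_{\ntopics}(\bPi)$, since $\bPi$ has rank $\ntopics$) gives $\|\bf N\|\le C\|\bX-\bPi\|/\lambda_{\ntopics}(\bPi)$. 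The key conditioning identity comes from the orthonormality of $\probLeftEigenvectors$: since $\probLeftEigenvectors^\T\probLeftEigenvectors=\bf H^\T\bW^\T\bW\bf H=\bI_K$, we get $\bW^\T\bW=(\bf H\bf H^\T)^{-1}$, so the singular values of $\bf H$ are the reciprocals of those of $\bW$. Hence $\kappa(\bf H\orthMatrix)=\kappa(\bf H)=\kappa(\bW)$ and $\|(\bf H\orthMatrix)^{-1}\|=\|\bf H^{-1}\|=\lambda_{\max}(\bW)=\lambda_1(\bW)$. These identities are what turn all conditioning of $\bf H$ into quantities involving $\bW$.

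\emph{Step 2 (SPA step; the main obstacle).} Assumption~\ref{ass:anchorDoc} ensures separability: for each topic there is a document $i$ with $\ev_i^\T\bW=\ev_k^\T$, so the rows of $\bf H\orthMatrix$ (the simplex vertices) appear among the rows of $\bW\bf H\orthMatrix$. Feeding $\empiricalLeftEigenvectors=\bW(\bf H\orthMatrix)+\bf N$ into the robustness analysis of preconditioned SPA (Gillis--Vavasis, Mizutani; see Appendix~\ref{sec:consistencySPA}), with per-row noise level $\le C\beta(\bX,\bPi)$ and vertex matrix of condition number $\kappa(\bW)$ and smallest singular value $1/\lambda_1(\bW)$, yields a permutation $\bP\in\mathcal P$ with
\[
\|\bf H\orthMatrix-\bP\hat{\bf H}\|_F\le C\,\ntopics^{1/2}\,\kappa(\bW)\,\beta(\bX,\bPi).
\]
This is the delicate part: the noise must lie below SPA's tolerance threshold, which is exactly what Assumption~\ref{cond:community memberships} guarantees for $\bar C$ small — note that $\beta(\bX,\bPi)\le \bar C/(\lambda_1(\bW)\kappa(\bW)K^{3/2})$ makes the right-hand side above at most $C\bar C/(\lambda_1(\bW)K)$, and preconditioning is what keeps the dependence at $\kappa(\bW)$ rather than $\kappa^2(\bW)$. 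Since $\sigma_{\min}(\bf H\orthMatrix)=1/\lambda_1(\bW)$, Weyl's inequality then gives $\sigma_{\min}(\hat{\bf H})\ge 1/(2\lambda_1(\bW))$, so $\hat{\bf H}$ is non-degenerate and $\|\hat{\bf H}^{-1}\|\le 2\lambda_1(\bW)$.

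\emph{Step 3 (assembling the bound).} Using $\bW=\probLeftEigenvectors\orthMatrix(\bf H\orthMatrix)^{-1}=(\empiricalLeftEigenvectors-\bf N)(\bf H\orthMatrix)^{-1}$ I obtain the identity
\[
\documentTopicMatrixEstimate-\bW\bP=\empiricalLeftEigenvectors\bigl(\hat{\bf H}^{-1}-(\bf H\orthMatrix)^{-1}\bP\bigr)+\bf N\,(\bf H\orthMatrix)^{-1}\bP ,
\]
and the resolvent identity $\hat{\bf H}^{-1}-(\bf H\orthMatrix)^{-1}\bP=\hat{\bf H}^{-1}\bP^\T(\bf H\orthMatrix-\bP\hat{\bf H})(\bf H\orthMatrix)^{-1}\bP$. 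For the first term, using $\|\empiricalLeftEigenvectors\|=1$, $\|\hat{\bf H}^{-1}\|\le 2\lambda_1(\bW)$, $\|(\bf H\orthMatrix)^{-1}\|=\lambda_1(\bW)$ and Step~2 gives a contribution $\le C\,\ntopics^{1/2}\lambda_{\max}^2(\bW)\kappa(\bW)\beta(\bX,\bPi)$. For the second term, since $\bf N(\bf H\orthMatrix)^{-1}$ is $n\times\ntopics$ I use $\|\bf N(\bf H\orthMatrix)^{-1}\|_F\le\ntopics^{1/2}\|\bf N\|\,\|(\bf H\orthMatrix)^{-1}\|\le C\ntopics^{1/2}\lambda_1(\bW)\|\bX-\bPi\|/\lambda_{\ntopics}(\bPi)$; bounding this further by the stated expression costs only the harmless factor $\kappa(\bPi)\ge1$. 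Summing the two contributions and minimizing over $\bP\in\mathcal P$ yields the claimed bound. The only genuinely hard step is Step~2; Steps~1 and~3 are routine perturbation algebra.
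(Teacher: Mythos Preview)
Your proposal is correct and follows essentially the same route as the paper: the same two-term decomposition $\documentTopicMatrixEstimate-\bW\bP=\empiricalLeftEigenvectors(\hat{\bf H}^{-1}-({\bf H}\orthMatrix)^{-1}\bP)+{\bf N}({\bf H}\orthMatrix)^{-1}\bP$, the same SPA robustness bound (Corollary~\ref{corollary:consistencyBasis}) to control $\|\hat{\bf H}-\tilde\bP{\bf H}\orthMatrix\|_F$, the same Weyl argument for invertibility of $\hat{\bf H}$, and the same identification $\lambda_j({\bf H})=1/\lambda_{K-j+1}(\bW)$ (Lemma~\ref{lemma:matrix_eigen0}). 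One small caveat: what Appendix~\ref{subsec:perturbation} actually provides is the Frobenius bound $\|{\bf N}\|_F\le 5\sqrt{2K}\,\kappa(\bPi)\|\bX-\bPi\|/\lambda_K(\bPi)$ (Corollary~\ref{lemma:eigenvectorConsistency}), not the spectral bound $\|{\bf N}\|\le C\|\bX-\bPi\|/\lambda_K(\bPi)$ you cite; the paper accordingly bounds the second term directly via $\|{\bf N}({\bf H}\orthMatrix)^{-1}\bP\|_F\le\|{\bf N}\|_F\|{\bf H}^{-1}\|$, which already carries both the $\sqrt{K}$ and the $\kappa(\bPi)$ factors and removes the need for your padding step.
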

  Inspection of the proof shows that, for this lemma to hold, it is enough to choose the constant $\bar C \le \min(C_*,C_0^{-1})$ where 
  $C_*,C_0$ are the constants from Theorem~\ref{theorem:spaBasic} and Corollary~\ref{corollary:consistencyBasis}.

\subsection{Bounds with high probability}\label{subsec:bounds with high}
  Lemma~\ref{lemma:documenTopicMatrixBound} combined with a concentration inequality for $\|\bX - \bPi\|$ (cf. Lemma~\ref{lemma:spectralNormNoise} in the Appendix) allows us to derive a  bound for the estimation error that holds with high probability when $\bX$ is sampled from distribution $\mathbb{P}_{\bPi}$. Introduce the value
  \begin{EQA}[c]
    \Delta(\bW, \bPi) = \left(\dfrac{\lambda_{1}(\documentTopicMatrix)}{\lambda_{\ntopics}(\bPi)}\right)^2 \kappa(\bW) \kappa^2(\bPi).
  \end{EQA}
  The main result is summarized in the next theorem.
  \begin{theorem}
  \label{theorem:mainBound}
    Let Assumption~\ref{ass:anchorDoc} hold, 
    and $N_i=N$ for $i=1,\dots,n$. Assume that $N \ge \log(n + p)$ and 
    \begin{equation}\label{cond:lambda}
       \lambda_K(\bPi) \ge \sqrt{\frac{10}{\bar C}}\, K\left(\dfrac{n \log(n + p)}{N}\right)^{1/4} {\kappa(\bPi)\sqrt{\lambda_{1}(\bW)
       \kappa(\bW)}}. 
    \end{equation}
    Then,  with probability at least $1 - 2(n + p)^{-1}$, matrix $\hat{\bf{H}}$ is non-degenerate and the output \(\documentTopicMatrixEstimate\) of preconditioned SPOC algorithm satisfies, for some  constant ${\rm C}_1 >0$,  
    \begin{EQA}
      \min_{\bP\in \mathcal{P}}\bigl\|\documentTopicMatrixEstimate - \documentTopicMatrix \bP\bigr\|_{F}
      &\le&
      {\rm C}_1  \ntopics\sqrt{\dfrac{n \log(n+ p)}{N}} \Delta(\bW, \bPi), 
    \end{EQA} 
    where $\mathcal{P}$ denotes the set of all permutation matrices. 
  \end{theorem}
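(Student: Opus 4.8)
The plan is to derive Theorem~\ref{theorem:mainBound} from the deterministic bound of Lemma~\ref{lemma:documenTopicMatrixBound} by showing that, with probability at least $1-2(n+p)^{-1}$, a matrix $\bX\sim\mathbb{P}_{\bPi}$ satisfies both hypotheses of that lemma --- namely $\|\bX-\bPi\|\le\tfrac12\lambda_{\ntopics}(\bPi)$ and Assumption~\ref{cond:community memberships} --- and then substituting high-probability bounds for $\|\bX-\bPi\|$ and $\beta(\bX,\bPi)$ into its right-hand side. The whole argument is thus a matter of producing the right stochastic control and keeping careful track of the factors in the deterministic estimate.

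The first ingredient is a concentration bound for the spectral norm of the noise: invoking Lemma~\ref{lemma:spectralNormNoise}, under $N\ge\log(n+p)$ one obtains
\begin{EQA}[c]
  \|\bX-\bPi\|\le C\sqrt{\dfrac{n\log(n+p)}{N}}
\end{EQA}
with probability at least $1-(n+p)^{-1}$. The second ingredient concerns the row quantities entering $\beta_i(\bX,\bPi)$. Since each row $X_i$ is an empirical frequency vector, $\|\ev_i^\T\bX\|_2=\|X_i\|_2\le\|X_i\|_1=1$ deterministically, so the factor $\|\ev_i^\T\bX\|_2$ is harmlessly bounded by $1$. For the centered rows, $NX_i$ being multinomial gives $\E\|\ev_i^\T(\bX-\bPi)\|_2^2\le 1/N$, and a concentration inequality for the Euclidean norm of a centered multinomial vector together with a union bound over $i=1,\dots,n$ yields $\max_i\|\ev_i^\T(\bX-\bPi)\|_2\le C\sqrt{\log(n+p)/N}$ with probability at least $1-(n+p)^{-1}$. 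Inserting these estimates into the definition of $\beta_i$ and maximizing over $i$ gives, on the intersection of the two events,
\begin{EQA}[c]
  \beta(\bX,\bPi)\le C\ntopics^{1/2}\kappa^2(\bPi)\dfrac{1}{\lambda_{\ntopics}^2(\bPi)}\sqrt{\dfrac{n\log(n+p)}{N}}+\dfrac{C}{\lambda_{\ntopics}(\bPi)}\sqrt{\dfrac{\log(n+p)}{N}},
\end{EQA}
in which the first summand dominates.

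Next I would use the lower bound \eqref{cond:lambda} on $\lambda_{\ntopics}(\bPi)$ to verify the two hypotheses. Squaring \eqref{cond:lambda} gives $\lambda_{\ntopics}^2(\bPi)\ge(10/\bar C)\,\ntopics^2\kappa^2(\bPi)\lambda_1(\bW)\kappa(\bW)\sqrt{n\log(n+p)/N}$, which is exactly the inequality needed to make the dominant summand of $\beta(\bX,\bPi)$ no larger than $\bar C/(\lambda_1(\bW)\kappa(\bW)\ntopics\sqrt{\ntopics})$; a parallel and weaker computation handles the subdominant summand, so Assumption~\ref{cond:community memberships} holds (the factor $\sqrt{10/\bar C}$ is chosen precisely to leave this slack). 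The same lower bound also forces $\|\bX-\bPi\|\le\tfrac12\lambda_{\ntopics}(\bPi)$, since the right-hand side of \eqref{cond:lambda} exceeds $C\sqrt{n\log(n+p)/N}$ by the extra factors $\ntopics$, $\kappa(\bPi)$ and $\sqrt{\lambda_1(\bW)\kappa(\bW)}$, all at least of constant order.

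Finally, with both hypotheses checked I would apply Lemma~\ref{lemma:documenTopicMatrixBound} and substitute the bounds above. In the resulting expression the term $C\ntopics^{1/2}\lambda_{\max}^2(\bW)\kappa(\bW)\,\beta(\bX,\bPi)$, evaluated with the dominant part of $\beta$, produces
\begin{EQA}[c]
  C\ntopics\left(\dfrac{\lambda_1(\bW)}{\lambda_{\ntopics}(\bPi)}\right)^2\kappa(\bW)\kappa^2(\bPi)\sqrt{\dfrac{n\log(n+p)}{N}}=C\ntopics\,\Delta(\bW,\bPi)\sqrt{\dfrac{n\log(n+p)}{N}},
\end{EQA}
which is the claimed rate, while the second deterministic term $C\ntopics^{1/2}\kappa(\bPi)\lambda_1(\bW)\|\bX-\bPi\|/\lambda_{\ntopics}(\bPi)$ carries strictly fewer powers of the spectral ratios and condition numbers and is therefore absorbed into $\ntopics\,\Delta(\bW,\bPi)\sqrt{n\log(n+p)/N}$. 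A union bound over the two concentration events yields the probability $1-2(n+p)^{-1}$. I expect the main obstacle to be the uniform-in-$i$ control of the centered row norms $\max_i\|\ev_i^\T(\bX-\bPi)\|_2$: unlike the deterministic bound on $\|X_i\|_2$ and the already-available spectral bound, this requires a genuine concentration inequality for the Euclidean norm of a centered multinomial vector, sharp enough to survive the union bound over the $n$ rows while preserving the $\sqrt{\log(n+p)/N}$ scaling.
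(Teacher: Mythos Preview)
Your proposal is correct and follows essentially the same route as the paper: verify the hypotheses of Lemma~\ref{lemma:documenTopicMatrixBound} on a high-probability event via concentration for $\|\bX-\bPi\|$ and $\max_i\|\ev_i^\T(\bX-\bPi)\|_2$, bound $\beta(\bX,\bPi)$, use \eqref{cond:lambda} to check Assumption~\ref{cond:community memberships}, and substitute. Two small remarks: (i) what you flag as the ``main obstacle'' --- the uniform row control $\max_i\|\ev_i^\T(\bX-\bPi)\|_2\le C\sqrt{\log(n+p)/N}$ --- is precisely the second inequality \eqref{eq2:lemma:spectralNormNoise} of Lemma~\ref{lemma:spectralNormNoise}, obtained there by applying the matrix Bernstein bound with $n=1$ and a union bound, so no new tool is needed; (ii) the dominance and absorption claims you make tacitly rely on the elementary inequalities $\lambda_{\ntopics}(\bPi)\le\sqrt{n/\ntopics}$ and $\lambda_1(\bPi)\le\sqrt{\ntopics}\,\lambda_1(\bW)$ from Lemma~\ref{lemma:matrix_eigen}, which the paper invokes explicitly at those steps.
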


{Condition \eqref{cond:lambda} in Theorem~\ref{theorem:mainBound} guarantees that Assumption~\ref{cond:community memberships}  is satisfied. This condition holds if $N$ is large enough and it quantifies the separation of the spectrum of matrix $\bPi$ from zero.}  

The bound of Theorem~\ref{theorem:mainBound} depends on the singular values of matrices \(\documentTopicMatrix\) and \(\bPi\). 
  We now further detail this bound for the balanced case where matrices $\bW$ and $\bPi$ are well conditioned and the smallest non-zero singular value of $\bPi$ is of the same order as the largest singular value of $\bW$. It follows from Lemma~\ref{lemma:matrix_eigen} in the Appendix that in this case both $\lambda_{\ntopics}(\bPi)$ and  $\lambda_{1}(\bW)$ are of the order of $\sqrt{n/K}$. This is coherent with the behavior of the singular values of $\bPi$  and $\bW$ that we observed in the simulation study (see Section~\ref{simulations_sv}). The balanced case is formally described by the following assumption.
  \begin{assumption}\label{assumption_eigenvalues}
    There exist two constants \(C > 1\) and $c > 0$  
    such that
    \begin{equation*}
      \lambda_{\ntopics}(\bPi) \ge C \lambda_{1}(\bW) \quad
      \text{and} 
      \quad \max\left\{\kappa(\bPi), \kappa(\bW)\right\} \le c.
    \end{equation*}
  \end{assumption}
The second condition in Assumption \ref{assumption_eigenvalues} is quite standard and just states that matrices $\bPi$ and $\bW$ are well-conditioned. The first condition is more restrictive.  It  holds, in particular, if matrix $\bA$ is well-conditioned with large enough singular value $\lambda_{\ntopics}(\bA) $. For example, it will be the case  if $\bA$ satisfies the {\it anchor word assumption} (see Section \ref{sec:intro}) with the probabilities of anchor words uniformly above the probabilities of other words. This is detailed in Lemma \ref{lem:anchor_word} of the Appendix. {Noteworthy, the lower bound of Theorem \ref{theorem:lower_bound} below is attained with such choice of matrix $\bA$, see the proof of Theorem \ref{theorem:lower_bound} in Appendix \ref{sec:proof-of-lower-bound}. We can interpret it as the fact that, in a minimax sense, such matrices $\bA$ are associated
with the least favorable models.} 

The following corollary quantifies the behavior of SPOC estimator in the balanced case. 
  \begin{corollary}[Upper bound in the balanced case]
  \label{corollary:mainBound}
    Let  Assumptions~\ref{ass:anchorDoc} and~\ref{assumption_eigenvalues} hold, and $N_i=N$ for $i=1,\dots,n$. 
    Let also 
    \begin{align}\label{eq:condition-on-N}
       N & \ge C K^5
       \log(n + p)
    \end{align}
    for some ${C}>0$ large enough.
    Then,  with probability at least $1 - 2(n + p)^{-1}$, matrix $\hat{\bf{H}}$ is non-degenerate and the output \(\documentTopicMatrixEstimate\) of preconditioned SPOC algorithm satisfies, for some  constant ${\rm C}_2 >0$,  
    \begin{EQA}[c]
      \min_{\bP\in \mathcal{P}}\bigl\|\documentTopicMatrixEstimate - \documentTopicMatrix \bP\bigr\|_{F}
      \le
      {\rm C}_2  \ntopics \sqrt{\dfrac{n \log(n+ p)}{N}},
    \end{EQA} 
    where $\mathcal{P}$ denotes the set of all permutation matrices.
  \end{corollary}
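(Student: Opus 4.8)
The plan is to obtain the corollary as a specialization of Theorem~\ref{theorem:mainBound}: in the balanced regime of Assumption~\ref{assumption_eigenvalues} the factor $\Delta(\bW, \bPi)$ degenerates into an absolute constant, and the sample-size condition~\eqref{eq:condition-on-N} is chosen precisely so as to imply the spectral-separation condition~\eqref{cond:lambda} required by the theorem. Granting these two facts, the bound of Theorem~\ref{theorem:mainBound} becomes ${\rm C}_1 K \sqrt{n \log(n+p)/N}\,\Delta(\bW,\bPi) \le {\rm C}_2 K \sqrt{n\log(n+p)/N}$, which is exactly the asserted inequality, with ${\rm C}_2$ a constant multiple of ${\rm C}_1$ determined by the bound on $\Delta(\bW,\bPi)$ established below; the non-degeneracy of $\hat{\bf H}$ and the probability $1 - 2(n+p)^{-1}$ are inherited verbatim from the theorem.

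First I would dispose of $\Delta(\bW,\bPi)$. Its definition is $\left(\lambda_1(\bW)/\lambda_K(\bPi)\right)^2 \kappa(\bW)\kappa^2(\bPi)$. The first inequality in Assumption~\ref{assumption_eigenvalues} gives $\lambda_1(\bW)/\lambda_K(\bPi) \le 1/C$, so the squared ratio is at most $C^{-2}$, while the second inequality bounds $\kappa(\bW)\kappa^2(\bPi) \le c^3$. Hence $\Delta(\bW,\bPi) \le c^3/C^2$, an absolute constant that can be folded into ${\rm C}_2$.

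The substantive work is to check that~\eqref{eq:condition-on-N} forces~\eqref{cond:lambda}. Here I would invoke Lemma~\ref{lemma:matrix_eigen}, which ensures that in the balanced case both $\lambda_K(\bPi)$ and $\lambda_1(\bW)$ are of order $\sqrt{n/K}$. Substituting $\lambda_1(\bW) \asymp \sqrt{n/K}$ and the constant bounds on the condition numbers into the right-hand side of~\eqref{cond:lambda}, that side is of order $K\,\bigl(n\log(n+p)/N\bigr)^{1/4}(n/K)^{1/4} \asymp K^{3/4}\sqrt{n}\,\bigl(\log(n+p)/N\bigr)^{1/4}$, whereas the left-hand side $\lambda_K(\bPi)$ is of order $K^{-1/2}\sqrt{n}$. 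Cancelling the common factor $\sqrt{n}$ and rearranging, condition~\eqref{cond:lambda} is equivalent to $N \gtrsim K^{5}\log(n+p)$, which is supplied by~\eqref{eq:condition-on-N} once its constant is taken large enough to absorb $\sqrt{10/\bar C}$ together with the implied constants of Lemma~\ref{lemma:matrix_eigen}.

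The hard part will be the exponent bookkeeping in this last step: one has to combine the explicit $K$ in~\eqref{cond:lambda} with the $K^{-1/4}$ coming from $\lambda_1(\bW)^{1/2} \asymp (n/K)^{1/4}$ to get $K^{3/4}$ on the right, weigh it against the $K^{-1/2}$ carried by $\lambda_K(\bPi)$ on the left, and confirm that the combined exponent $K^{5/4}$, sitting inside a fourth power, is what produces the $K^5$ in~\eqref{eq:condition-on-N}, while checking that the $n$- and $p$-dependence cancels so that no spurious growth survives. Once~\eqref{cond:lambda} is in place, the remaining hypotheses of Theorem~\ref{theorem:mainBound} are trivial --- in particular $N \ge \log(n+p)$ follows from~\eqref{eq:condition-on-N} because $K \ge 2$ --- and a direct application of the theorem, together with the constant bound on $\Delta(\bW,\bPi)$, finishes the argument.
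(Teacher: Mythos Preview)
Your proposal is correct and follows essentially the same route as the paper's proof: bound $\Delta(\bW,\bPi)$ by a constant via Assumption~\ref{assumption_eigenvalues}, then verify that~\eqref{eq:condition-on-N} implies~\eqref{cond:lambda} using the $\sqrt{n/K}$ scaling from Lemma~\ref{lemma:matrix_eigen} together with Assumption~\ref{assumption_eigenvalues}. One small remark on the algebra: the paper organizes the verification of~\eqref{cond:lambda} slightly more economically, using only the single inequality $\lambda_1(\bW)\ge \sqrt{n/K}$ --- after invoking $\lambda_K(\bPi)\ge C\lambda_1(\bW)$ one may divide both sides of~\eqref{cond:lambda} by $\sqrt{\lambda_1(\bW)}$ and reduce directly to $\sqrt{n/K}\gtrsim K^2\sqrt{n\log(n+p)/N}$, without separately deriving the upper bound $\lambda_1(\bW)\lesssim\sqrt{n/K}$ (which, as you implicitly use, follows from $\lambda_K(\bPi)\le\sqrt{n/K}$ and Assumption~\ref{assumption_eigenvalues}). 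Either way the exponent count lands on $K^5$, and your bookkeeping is correct.
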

  To prove Corollary~\ref{corollary:mainBound}, it is enough to notice that under Assumption~\ref{assumption_eigenvalues} we have $\Delta(\bW, \bPi)\le C'$ for some constant $C'>0$, and condition~\eqref{cond:lambda} follows from~\eqref{eq:condition-on-N}, Assumption~\ref{assumption_eigenvalues} and the inequality $\lambda_{1}(\bW)\ge \sqrt{n/K}$ (see Lemma~\ref{lemma:matrix_eigen} in the Appendix).
  
  Note that from Theorem~\ref{theorem:mainBound} and Corollary~\ref{corollary:mainBound} we can derive bounds in other norms. Thus, using the inequalities { $\bigl\|\documentTopicMatrixEstimate - \documentTopicMatrix \bP\bigr\|_{1}\leq \sqrt{Kn} \bigl\|\documentTopicMatrixEstimate - \documentTopicMatrix \bP\bigr\|_{F}$} and $\bigl\|\documentTopicMatrixEstimate - \documentTopicMatrix \bP\bigr\|_{1, \infty}\leq \sqrt{K} \bigl\|\documentTopicMatrixEstimate - \documentTopicMatrix \bP\bigr\|_{F}$, we obtain the following corollary:
  \begin{corollary}
  \label{corollary:one-infty-norm}
    If the assumptions of Theorem~\ref{theorem:mainBound} are satisfied then, 
    with probability at least $1 - 2(n + p)^{-1}$, matrix $\hat{\bf{H}}$ is non-degenerate and the output \(\documentTopicMatrixEstimate\) of preconditioned SPOC algorithm satisfies
    \begin{EQA}
      \min_{\bP\in \mathcal{P}}\bigl\|\documentTopicMatrixEstimate - \documentTopicMatrix \bP\bigr\|_{1,\infty}
      &\le&
      {\rm C}_1 \ntopics^{3/2}\sqrt{\dfrac{n \log(n+ p)}{N}} \Delta(\bW, \bPi)\quad \text{and}\\ \min_{\bP\in \mathcal{P}}\bigl\|\documentTopicMatrixEstimate - \documentTopicMatrix \bP\bigr\|_{1}
      &\le&
      {\rm C}_1 \ntopics^{3/2}n\sqrt{\dfrac{ \log(n+ p)}{N}} \Delta(\bW, \bPi). 
    \end{EQA} 
    If the assumptions of Corollary~\ref{corollary:mainBound} are satisfied then,  with probability at least $1 - 2(n + p)^{-1}$, matrix $\hat{\bf{H}}$ is non-degenerate and the output \(\documentTopicMatrixEstimate\) of preconditioned SPOC algorithm satisfies
    \begin{EQA}[c]
      \label{eq:vtor}
      \min_{\mathbf{P}\in \mathcal{P}}
    \bigl\|\documentTopicMatrixEstimate - \documentTopicMatrix \bP\bigr\|_{1,\infty}
    \le
    {\rm C}_2 \ntopics^{3/2} \sqrt{\dfrac{n \log(n+p)}{N}}\quad \text{and}\quad \min_{\mathbf{P}\in \mathcal{P}}
    \bigl\|\documentTopicMatrixEstimate - \documentTopicMatrix \bP\bigr\|_{1}
    \le
    {\rm C}_2 \ntopics^{3/2}n \sqrt{\dfrac{ \log(n+p)}{N}}
    \end{EQA} 
  \end{corollary}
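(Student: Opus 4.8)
The plan is to deduce both displayed bounds directly from the Frobenius-norm estimates already established in Theorem~\ref{theorem:mainBound} and Corollary~\ref{corollary:mainBound}, invoking only two elementary inequalities that relate the $\ell_1$- and $\ell_{1,\infty}$-norms of an $\nsize\t\ntopics$ matrix to its Frobenius norm. Since the error matrix $\documentTopicMatrixEstimate - \documentTopicMatrix\bP$ has $\nsize$ rows and $\ntopics$ columns, no new probabilistic argument is required: the high-probability event $1-2(n+p)^{-1}$ and the non-degeneracy of $\hat{\bf H}$ are inherited verbatim from the two cited results, so the corollary holds on exactly the same event.

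First I would record the two deterministic norm comparisons. For any matrix $\bM\in\RR^{\nsize\t\ntopics}$, the Cauchy--Schwarz inequality applied row-by-row gives $\sum_{j=1}^{\ntopics}|M_{ij}|\le \sqrt{\ntopics}\,\bigl(\sum_{j=1}^{\ntopics}M_{ij}^2\bigr)^{1/2}$, and taking the maximum over $i$ yields $\|\bM\|_{1,\infty}\le\sqrt{\ntopics}\,\|\bM\|_F$. Applying Cauchy--Schwarz instead to all $\nsize\ntopics$ entries viewed as a single vector gives $\|\bM\|_1\le\sqrt{\nsize\ntopics}\,\|\bM\|_F$.

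It then remains to apply these to $\bM = \documentTopicMatrixEstimate - \documentTopicMatrix\bP$ and to track the numerical factors. The one point that deserves a line of care is the order of the minimum over permutations: for the permutation $\bP^\ast$ attaining $\min_{\bP}\|\documentTopicMatrixEstimate-\documentTopicMatrix\bP\|_F$, the inequalities above give $\min_{\bP}\|\documentTopicMatrixEstimate-\documentTopicMatrix\bP\|_{1,\infty}\le\sqrt{\ntopics}\,\|\documentTopicMatrixEstimate-\documentTopicMatrix\bP^\ast\|_F = \sqrt{\ntopics}\,\min_{\bP}\|\documentTopicMatrixEstimate-\documentTopicMatrix\bP\|_F$, and analogously for the $\ell_1$-norm with factor $\sqrt{\nsize\ntopics}$; that is, one uses the Frobenius-optimal permutation to upper bound the minima in the other two norms. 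Substituting the bound $\min_{\bP}\|\documentTopicMatrixEstimate-\documentTopicMatrix\bP\|_F\le {\rm C}_1\ntopics\sqrt{n\log(n+p)/N}\,\Delta(\bW,\bPi)$ of Theorem~\ref{theorem:mainBound} then multiplies the factor $\ntopics$ by $\sqrt{\ntopics}$ and by $\sqrt{\nsize\ntopics}$ to produce the claimed $\ntopics^{3/2}\sqrt{n\log(n+p)/N}\,\Delta(\bW,\bPi)$ and $\ntopics^{3/2}n\sqrt{\log(n+p)/N}\,\Delta(\bW,\bPi)$ estimates, respectively, while the same substitution using Corollary~\ref{corollary:mainBound} yields the second pair, with $\Delta(\bW,\bPi)\le C'$ absorbed into the constant ${\rm C}_2$. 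There is no genuine obstacle here: the statement is a purely mechanical consequence of the Frobenius bounds, and the only care needed is in bookkeeping the power of $\ntopics$ and the extra factor $\sqrt{\nsize}$ that appears when passing from the $\ell_{1,\infty}$-norm to the $\ell_1$-norm.
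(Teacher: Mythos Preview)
Your proposal is correct and follows exactly the paper's own argument: the paper states just before the corollary that it follows from the Frobenius bounds of Theorem~\ref{theorem:mainBound} and Corollary~\ref{corollary:mainBound} via the inequalities $\|\documentTopicMatrixEstimate-\documentTopicMatrix\bP\|_1\le\sqrt{Kn}\,\|\documentTopicMatrixEstimate-\documentTopicMatrix\bP\|_F$ and $\|\documentTopicMatrixEstimate-\documentTopicMatrix\bP\|_{1,\infty}\le\sqrt{K}\,\|\documentTopicMatrixEstimate-\documentTopicMatrix\bP\|_F$. Your write-up is in fact more detailed than the paper's, since you explicitly justify these norm comparisons via Cauchy--Schwarz and address the order of the minimum over permutations.
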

   It follows from Corollaries \ref{corollary:mainBound} and 
   \ref{corollary:one-infty-norm} that, for all the considered norms, the rate of estimating $\bW$ (to within a weak factor) is determined by two parameters, which are the number of documents $n$ and the sample size $N$. The dependency on the size of the dictionary $p$ is weak. This is confirmed by the numerical experiments, see Section \ref{sec:simulations}.

 \subsection{Adaptive procedure when $K$ is unknown.} 
 
 We now propose an adaptive variant of the SPOC algorithm when the number of topics $K$ is unknown. It is obtained by replacing $K$ in Algorithm \ref{algo:SPOC} by the estimator
 $$
 \hat K = \max\Big\{j: \ \lambda_j(\bX)>4 \sqrt{\dfrac{n \log(n+p)}{N}} \Big\}.
 $$
 In the sequel, the resulting procedure will be called the adaptive (preconditioned) SPOC algorithm. The following analogs of Theorem~\ref{theorem:mainBound} and Corollary~\ref{corollary:mainBound} hold.
 \begin{theorem}
  \label{theorem:mainBound-adapt}
    Let the assumptions of Theorem~\ref{theorem:mainBound} be satisfied and 
    \begin{equation}\label{cond:lambda-adapt}
       \lambda_{1}(\bW) > \frac{32 \bar C}{5 K^2}\, \sqrt{\dfrac{n \log(n+p)}{N}} . 
    \end{equation}
    Then, with probability at least $1 - 2(n + p)^{-1}$, matrix $\hat{\bf{H}}$ is non-degenerate, {$\hat K=K$,} and the output \(\documentTopicMatrixEstimate\) of the adaptive preconditioned SPOC algorithm satisfies
    \begin{EQA}
      \min_{\bP\in \mathcal{P}}\bigl\|\documentTopicMatrixEstimate - \documentTopicMatrix \bP\bigr\|_{F}
      &\le&
      {\rm C}_1 \ntopics \sqrt{\dfrac{n \log(n+ p)}{N}} \Delta(\bW, \bPi). 
    \end{EQA}
    \end{theorem}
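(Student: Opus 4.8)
The plan is to reduce the adaptive statement to Theorem~\ref{theorem:mainBound} by proving that the data-driven choice $\hat K$ coincides with the true number of topics $K$ on a high-probability event. Once we know that $\hat K = K$, the adaptive preconditioned SPOC algorithm is \emph{literally identical} to the non-adaptive one analyzed in Theorem~\ref{theorem:mainBound}: its only departure from Algorithm~\ref{algo:SPOC} is the substitution of $\hat K$ for $K$, which is vacuous when the two agree. Hence the stated Frobenius bound follows immediately, and the whole argument hinges on establishing the event $\{\hat K = K\}$ with the required probability.

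To control $\hat K$, I would first invoke the concentration inequality for $\|\bX - \bPi\|$ (Lemma~\ref{lemma:spectralNormNoise}), which under $N_i = N$ and $N \ge \log(n+p)$ yields, on an event of probability at least $1 - 2(n+p)^{-1}$, a bound of the form $\|\bX - \bPi\| \le 2\sqrt{n\log(n+p)/N}$. This is the \emph{same} event on which Theorem~\ref{theorem:mainBound} holds, so no additional union bound is needed and the claimed probability $1 - 2(n+p)^{-1}$ is preserved. On this event, Weyl's inequality for singular values gives $|\lambda_j(\bX) - \lambda_j(\bPi)| \le \|\bX - \bPi\|$ for every $j$.

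The two comparisons with the threshold $4\sqrt{n\log(n+p)/N}$ then follow. Since $\bPi$ has rank $K$, we have $\lambda_{K+1}(\bPi) = 0$, whence $\lambda_{K+1}(\bX) \le \|\bX - \bPi\| \le 2\sqrt{n\log(n+p)/N} < 4\sqrt{n\log(n+p)/N}$; because the singular values are non-increasing, this forces $\hat K \le K$. For the reverse inequality $\hat K \ge K$, I would combine the spectral-separation condition \eqref{cond:lambda} of Theorem~\ref{theorem:mainBound} with the new condition \eqref{cond:lambda-adapt}. Substituting the lower bound on $\sqrt{\lambda_1(\bW)}$ coming from \eqref{cond:lambda-adapt} into \eqref{cond:lambda} and using $\kappa(\bPi), \kappa(\bW) \ge 1$, the factor of $K$ cancels and the two quarter-powers of $n\log(n+p)/N$ combine into a square root, yielding $\lambda_K(\bPi) > 8\sqrt{n\log(n+p)/N}$. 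Hence $\lambda_K(\bX) \ge \lambda_K(\bPi) - \|\bX - \bPi\| > 8\sqrt{n\log(n+p)/N} - 2\sqrt{n\log(n+p)/N} = 6\sqrt{n\log(n+p)/N} > 4\sqrt{n\log(n+p)/N}$, so $K$ belongs to the defining set of $\hat K$ and thus $\hat K \ge K$. Together these give $\hat K = K$, and the theorem follows from Theorem~\ref{theorem:mainBound}.

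The argument is essentially routine perturbation theory; the only delicate point is the constant bookkeeping in the step above, where one must check that the constants $\sqrt{10/\bar C}$ and $\sqrt{32\bar C/5}$ appearing in \eqref{cond:lambda} and \eqref{cond:lambda-adapt} are engineered precisely so that their product with the $K$-factors equals $8$, comfortably clearing the threshold $4$ after subtracting the noise level $2$. I expect this constant-matching to be the main (though modest) obstacle, and it is exactly what dictates the particular numerical constant $32\bar C/(5K^2)$ in condition \eqref{cond:lambda-adapt}.
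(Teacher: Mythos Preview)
Your approach is essentially identical to the paper's: the authors package the Weyl-inequality step as a standalone Lemma~\ref{lem:thresholding} (with threshold $\tau$ and condition $\lambda_K(\bPi)>2\tau$), then apply it with $\tau=4\sqrt{n\log(n+p)/N}$ on the same event $\mathcal{A}$ from the proof of Theorem~\ref{theorem:mainBound}, and verify exactly your constant computation that \eqref{cond:lambda} and \eqref{cond:lambda-adapt} combine to give $\lambda_K(\bPi)>8\sqrt{n\log(n+p)/N}$. One small slip: Lemma~\ref{lemma:spectralNormNoise} gives the noise bound with constant $4$, not $2$; with the correct constant the margin is $8-4=4$ rather than your $8-2=6$, but since the inequality $\lambda_K(\bPi)>8\sqrt{\cdots}$ is strict this still clears the threshold and nothing else changes.
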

    \begin{corollary}
  \label{corollary:mainBound-adapt}
    Let the assumptions of Corollary~\ref{corollary:mainBound} and \eqref{cond:lambda-adapt} be satisfied. Then,  with probability at least $1 - 2(n + p)^{-1}$, matrix $\hat{\bf{H}}$ is non-degenerate, {$\hat K=K$,} and the output \(\documentTopicMatrixEstimate\) of the adaptive preconditioned SPOC algorithm satisfies
    \begin{EQA}[c]
      \label{eq:vtor}
      \min_{\mathbf{P}\in \mathcal{P}}
    \bigl\|\documentTopicMatrixEstimate - \documentTopicMatrix \bP\bigr\|_{F}
    \le
    {\rm C}_2 \ntopics \sqrt{\dfrac{n \log(n+p)}{N}}.
    \end{EQA} 
  \end{corollary}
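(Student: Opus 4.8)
The plan is to obtain Corollary~\ref{corollary:mainBound-adapt} from Theorem~\ref{theorem:mainBound-adapt} in exactly the same way that Corollary~\ref{corollary:mainBound} was obtained from Theorem~\ref{theorem:mainBound}: first verify that the balanced-case hypotheses imply every assumption of the theorem, and then replace the data-dependent factor $\Delta(\bW,\bPi)$ by an absolute constant. The genuinely new ingredient over the non-adaptive setting, namely the event $\{\hat K = \ntopics\}$, is already delivered by Theorem~\ref{theorem:mainBound-adapt}, so at the level of the corollary no further probabilistic work is required and the argument is essentially bookkeeping.

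First I would check the hypotheses. By the reasoning used to deduce Corollary~\ref{corollary:mainBound} from Theorem~\ref{theorem:mainBound}, Assumption~\ref{assumption_eigenvalues} together with condition~\eqref{eq:condition-on-N} and the lower bound $\lambda_{1}(\bW)\ge\sqrt{n/\ntopics}$ of Lemma~\ref{lemma:matrix_eigen} imply condition~\eqref{cond:lambda}; hence all assumptions of Theorem~\ref{theorem:mainBound} are in force. Since condition~\eqref{cond:lambda-adapt} is assumed directly in the corollary, every hypothesis of Theorem~\ref{theorem:mainBound-adapt} is satisfied. Applying that theorem yields, with probability at least $1-2(n+p)^{-1}$, that $\hat{\bf H}$ is non-degenerate, that $\hat K = \ntopics$, and that
\[
\min_{\bP\in\mathcal P}\bigl\|\documentTopicMatrixEstimate-\documentTopicMatrix\bP\bigr\|_F \le {\rm C}_1\,\ntopics\sqrt{\dfrac{n\log(n+p)}{N}}\,\Delta(\bW,\bPi).
\]

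It then remains only to bound $\Delta(\bW,\bPi)$ by a constant. Under Assumption~\ref{assumption_eigenvalues} one has $\lambda_{1}(\bW)/\lambda_{\ntopics}(\bPi)\le 1/C$ and $\max\{\kappa(\bW),\kappa(\bPi)\}\le c$, so that
\[
\Delta(\bW,\bPi)=\Bigl(\dfrac{\lambda_{1}(\documentTopicMatrix)}{\lambda_{\ntopics}(\bPi)}\Bigr)^2\kappa(\bW)\kappa^2(\bPi)\le \dfrac{c^3}{C^2}=:C'.
\]
Substituting this bound into the previous display and absorbing $C'$ into the constant produces the asserted inequality with a new constant ${\rm C}_2$, completing the reduction.

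The substantive content, and the only place where real care is needed, sits one level up inside Theorem~\ref{theorem:mainBound-adapt}: the identification $\hat K = \ntopics$. The natural route there is Weyl's inequality combined with the concentration bound for $\|\bX-\bPi\|$ from Lemma~\ref{lemma:spectralNormNoise}. Since $\bPi$ has rank $\ntopics$, for $j>\ntopics$ one has $\lambda_j(\bX)\le\|\bX-\bPi\|$, which the concentration bound keeps below the threshold $4\sqrt{n\log(n+p)/N}$; for $j=\ntopics$ one uses $\lambda_{\ntopics}(\bX)\ge\lambda_{\ntopics}(\bPi)-\|\bX-\bPi\|$ and then invokes conditions~\eqref{cond:lambda}--\eqref{cond:lambda-adapt} to keep the right-hand side above the threshold. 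Once this spectral gap argument is granted, the adaptive procedure runs with the correct number of topics and coincides with non-adaptive preconditioned SPOC, so the only obstacle at the corollary level is the elementary estimate on $\Delta(\bW,\bPi)$ carried out above.
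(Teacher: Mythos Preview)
Your proposal is correct and follows essentially the same route as the paper: deduce the corollary from Theorem~\ref{theorem:mainBound-adapt} exactly as Corollary~\ref{corollary:mainBound} was deduced from Theorem~\ref{theorem:mainBound}, by checking that~\eqref{eq:condition-on-N} and Assumption~\ref{assumption_eigenvalues} imply~\eqref{cond:lambda} and then bounding $\Delta(\bW,\bPi)$ by a constant. Your final paragraph on the $\hat K=\ntopics$ argument accurately sketches the content of the paper's Lemma~\ref{lem:thresholding}, though strictly speaking that belongs to the proof of Theorem~\ref{theorem:mainBound-adapt} rather than to the corollary itself.
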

 Note that condition \eqref{cond:lambda-adapt} introduced in Theorem~\ref{theorem:mainBound-adapt} and Corollary~\ref{corollary:mainBound-adapt} additionally to the conditions of Theorem~\ref{theorem:mainBound} and Corollary~\ref{corollary:mainBound} is rather mild. Indeed, due to inequality  \eqref{eq:matrix_property2} proved in the Appendix we have $\lambda_{1}(\bW)\ge \sqrt{n/K}$.
 Therefore, it is sufficient that $N> \frac{C\log(n+p)}{K^3}$ to grant \eqref{cond:lambda-adapt}.

\subsection{Minimax lower bound} 
  The following lower bound shows that the rate obtained in Corollary~\ref{corollary:mainBound} is near minimax optimal.  Denote by $\mathcal{M}$ the class of all matrices $\bPi$ satisfying the assumptions stated in the Section \ref{sec:intro} and Assumption~\ref{assumption_eigenvalues}. 
  \begin{theorem}[Lower bound]
  \label{theorem:lower_bound}
    Assume that  $N_i=N$ for $i=1,\dots,n$ and $2\leq K\leq \min( p/4,N/2,{n/2})$. 
    Then, there exist two constants $C >0$ and $c \in ( 0,1)$ such that, for any estimator and ${\overline{\bW}}$  of $\bW$ we have
    \begin{align}\label{inf_bnd_l2}
       \sup_{\bPi\in \mathcal{M}} \mathbb{P}_{\bPi} 
      \bigg\{\min_{\mathbf{P}\in \mathcal{P}}\|\overline{\bW} - \bW\mathbf{P}\|_F \geq C \sqrt{\frac{n}{N}} \bigg\} 
      \geq c,
    \end{align}
    and
 \begin{align}\label {inf_bnd_l1}
       \sup_{\bPi\in \mathcal{M}} \mathbb{P}_{\bPi} 
      \bigg\{\min_{\mathbf{P}\in \mathcal{P}}\|\overline{\bW} - \bW\mathbf{P}\|_1 \geq Cn \sqrt{\frac{K}{N}} \bigg\} 
      \geq c,
    \end{align}
    where $\mathcal{P}$ denotes the set of all permutation matrices. 
  \end{theorem}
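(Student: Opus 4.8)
The plan is to prove both lower bounds by a single reduction-plus-Assouad argument, choosing a least favorable subfamily of $\mathcal{M}$ on which the problem decouples into $n$ independent multinomial estimation problems. First I would fix the topic--word matrix $\bA$ to have a block (disjoint-support) anchor-word structure: partition the $p\ge 4K$ words into $K$ blocks and let topic $k$ be supported, say uniformly, on block $k$. Then every word of block $k$ is an anchor word for topic $k$, the matrix $\bA$ is well conditioned with $\lambda_K(\bA)=\Theta(1)$, and the key identity $\sum_{j\in\,\text{block }k}\Pi_{ij}=W_{ik}$ holds. Consequently, grouping the counts of document $i$ by blocks turns the observation into an exact $\mathrm{Multinomial}(N,(W_{i1},\dots,W_{iK}))$ on $K$ cells with a known within-block law, so that estimating $\bW$ is as hard as estimating $n$ independent $K$-cell probability vectors from $N$ draws each.

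Next I would design the base configuration of $\bW$ and superimpose the perturbations. I reserve $K$ documents as fixed anchor documents (rows $\be_k$, enforcing Assumption~\ref{ass:anchorDoc}) and split the remaining $n-K\ge n/2$ free documents into $K$ groups; group $k$ is placed near $(1-\rho)\be_k+\rho\,\mathbf 1/K$ with $\rho=\Theta(1)$, so that every column of $\bW$ is balanced, $\kappa(\bW)=\Theta(1)$ and $\lambda_K(\bW)\asymp\lambda_1(\bW)\asymp\sqrt{n/K}$. Together with $\lambda_K(\bA)=\Theta(1)$ this yields $\lambda_K(\bPi)\ge\lambda_K(\bW)\lambda_K(\bA)\ge C\lambda_1(\bW)$, so that $\bPi\in\mathcal M$ (Assumption~\ref{assumption_eigenvalues}) for every member. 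Indexing by a hypercube $\omega\in\{0,1\}^m$ with $m\asymp nK$, each free document carries $\sim K/2$ independent bits, where bit $t$ moves a mass $\pm\delta$ between a disjoint pair of topics whose weights are $\gtrsim 1/K$ at the base point. Because these coordinates are $\gtrsim 1/K$, the divergence contributed by flipping one bit is of order $N\delta^2(1/W_{i,l}+1/W_{i,l'})\asymp N\delta^2 K$; choosing $\delta\asymp 1/\sqrt{NK}$ keeps each per-bit divergence bounded by a constant, while $\delta\ll 1/K$ guarantees that every vertex stays in the simplex and preserves the conditioning.

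Both bounds then fall out of Assouad's lemma, since each loss decomposes additively over the independent bits. Each bit contributes $2\delta$ to the row $\ell_1$-distance and $2\delta^2$ to the squared Frobenius distance, so summing over the $m\asymp nK$ bits yields, between adjacent vertices, an $\ell_1$-separation $\asymp nK\delta\asymp n\sqrt{K/N}$ and a squared Frobenius separation $\asymp nK\delta^2\asymp n/N$. Assouad's lemma with per-bit divergence $O(1)$ lower-bounds the minimax expected $\ell_1$ loss by $\asymp n\sqrt{K/N}$ and the expected squared Frobenius loss by $\asymp n/N$; the stated ``with probability at least $c$'' form follows by a reverse Markov inequality (the losses being bounded on $\mathcal M$), after taking a square root in the Frobenius case. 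The permutation infimum $\min_{\bP\in\mathcal P}$ does not help the estimator: within the family only the tiny perturbations vary while the group structure near the $\be_k$'s is fixed, so any non-identity permutation forces an error of order $\sqrt n$, far larger than the target rates; hence $\min_{\bP}$ reduces to the identity and the Assouad separations apply verbatim.

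The main obstacle is the joint design in the second step, where two requirements compete. Spectral well-conditioning (Assumption~\ref{assumption_eigenvalues}, in particular $\lambda_K(\bPi)\ge C\lambda_1(\bW)$) pushes each document's mass onto a few topics, which tends to make most coordinates $W_{ik}$ of order $1/K$ or smaller; but indistinguishability at the $\sqrt{K/N}$ scale requires the perturbed coordinates to stay at least of order $1/K$, since otherwise a single sparse bit would cost divergence $\gg N\delta^2 K$ and force $\delta$ too small to reach the rate. The group-near-$\be_k$ base with $\rho=\Theta(1)$ is what threads this needle, and verifying that all $2^m$ vertices simultaneously lie in $\mathcal M$ (balanced columns, controlled $\kappa(\bW),\kappa(\bPi)$, and $\lambda_K(\bPi)\ge C\lambda_1(\bW)$) is the technical heart. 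A secondary point worth flagging is that the extra $\sqrt K$ in the $\ell_1$ rate relative to the Frobenius rate comes precisely from using $\sim K/2$ independent sparse Assouad bits per document rather than one spread direction: a single balanced unit perturbation would cost divergence of order $K$ and yield only the weaker $n/\sqrt N$ in $\ell_1$.
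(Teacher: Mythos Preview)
Your hypercube construction, the perturbation scale $\delta\asymp 1/\sqrt{NK}$, the group--near--$\be_k$ base for $\bW$, and the handling of the permutation infimum are all essentially what the paper does (the paper's base matrix $\bW^{(0)}$ is exactly your configuration with $\rho=1/4$). There are, however, two genuine gaps.

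\textbf{The uniform block $\bA$ does not belong to $\mathcal{M}$.} If topic $k$ is uniform on a block of size $p/K$, then $\bA\bA^{\T}=(K/p)\bI_K$, so every nonzero singular value of $\bA$ equals $\sqrt{K/p}$, not $\Theta(1)$ as you claim. Consequently $\lambda_K(\bPi)=\sqrt{K/p}\,\lambda_K(\bW)$, and the first requirement of Assumption~\ref{assumption_eigenvalues}, $\lambda_K(\bPi)\ge C\lambda_1(\bW)$, fails for any fixed $C$ once $p$ is large; the constructed family is then outside $\mathcal M$ and the lower bound does not apply to it. The paper avoids this by putting almost all the mass of each topic on a \emph{single} anchor word: with $\bA=\frac{N-K}{N}\bA^0+\frac{K}{pN}\mathbf 1_{K,p}$ and $\bA^0$ having one canonical vector per topic, $\bA^0(\bA^0)^{\T}=\bI_K$ and $\lambda_K(\bA)\ge 1/4$. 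Your sufficiency/blocking idea survives this change (block sums are then affine in $W_{ik}$ with the same KL behaviour), but the specific $\bA$ you proposed has to be replaced.

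\textbf{Assouad plus reverse Markov does not deliver the in-probability statement.} Assouad's lemma bounds the minimax \emph{expected} loss; your reverse Markov step would need $\mu/L_{\max}$ bounded below by a constant, but here $\mu\asymp\sqrt{n/N}$ (respectively $n\sqrt{K/N}$) while the loss can be as large as $\sqrt{nK}$ (respectively $nK$), so the resulting probability is $o(1)$, not a constant $c$. The paper sidesteps this by passing from the hypercube to a Varshamov--Gilbert packing $\{w^{(0)},\dots,w^{(T)}\}$ with pairwise Hamming distance $\ge m/16$ and $\log T\gtrsim m$, then applying Tsybakov's Theorem~2.5: the total KL between $\mathbb P_{\bPi^{(j)}}$ and $\mathbb P_{\bPi^{(0)}}$ is $O(m)\le (\log T)/16$, and that theorem directly yields the ``with probability at least $c$'' conclusion. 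Switching from Assouad to this VG+Fano step is the natural fix and uses exactly the same hypercube you built.
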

 Combining Corollary~\ref{corollary:mainBound} and \eqref{inf_bnd_l2}  we find that, to within a weak factor, the minimax optimal rate of estimation of $\bW$ on the class $\mathcal{M}$ in the Frobenius norm scales as $\sqrt{n/N}$.  On the other hand,  \eqref{inf_bnd_l1} and Corollary~\ref{corollary:one-infty-norm} imply that, again to within a weak factor, the minimax optimal rate of estimation of $\bW$ in the $l_1$-norm on $\mathcal{M}$ scales as $n/\sqrt{N}$. 
 
 {
 \begin{remark}
 \label{rem1}
 Inspection of the proof of Theorem \ref{theorem:lower_bound} shows that the lower bound is in fact established for a subset of $\mathcal{M}$ composed of matrices satisfying both anchor word and anchor document assumptions. 
 \end{remark}
 \begin{remark}\label{rem2}
 It is proved in~\cite{Ke2017,bing2018fast} that, under the same model of observations and anchor word assumption, the minimax optimal rate for estimation of matrix $\bA$ in the $\ell_1$-norm 
  scales as $\sqrt{\frac{p}{nN}}$ (to within weak factors). 
  Note that this rate is determined by all the three main parameters of the problem - the size of the dictionary $p$, the number of documents $n$ and the sample size $N$.  
  This is quite different from the minimax $\ell_1$-rate $n/\sqrt{N}$ of estimation $\bW$, which remains valid under anchor word assumption, cf. Remark \ref{rem1} and the remark after Assumption \ref{assumption_eigenvalues}. 
  It shows that there is a significant difference between the problems of estimating matrices $\bA$ and $\bW$ in topic models. 
 \end{remark}
 
 }

\section{Related Work}
\label{sec:comparison}
  There exists an extensive literature on topic modeling and several algorithms have been proposed for estimation of matrices $\bA$ and $\bW$. As the problem of recovering these two matrices  when there is no noise is an instance of non-negative matrix factorisation several papers propose algorithms based on minimization of  a regularized cost function, see, e.g., \cite{lee99,Donoho2004,Cichocki2009,Recht2012}. Such methods result in non-convex optimization and often fail when many words do not appear in a single document, that is when $N\ll p$.

  Another approach is to use Bayesian methods such as the popular \textit{Latent Dirichlet Allocation} (LDA) introduced in~\cite{blei2003latent}.
  LDA proceeds by imposing a Dirichlet prior on $\bA$ and then computing an estimator of $\bW$ by a variational EM-algorithm. The original paper~\cite{blei2003latent} and the subsequent line of work do not provide statistical guarantee on the recovery of $\bW$. In~\cite{blei2003latent}, the authors argue that {LDA} avoids two issues of the {pLSI} that are the risk of overfitting and the difficulty of classifying a new document outside the corpus. Yet, {LDA} is computationally slow and makes the assumption that topics are uncorrelated, which may be not realistic~\cite{blei2007correlated,li2006pachinko}. This last issue has been addressed in~\cite{lafferty2006correlated} by introducing \textit{Correlated topic models}. LDA has been extended to relax some  assumptions such as the \textit{bag-of-words} hypothesis (``order of words does not matter'')~\cite{wallach2006topic}, the exchangeability of documents (``topics do not vary in time'')~\cite{blei2006dynamic}, the assumption that the number of topics is known~\cite{teh2005sharing}. Also, to recover $\bW$ in the LDA setting, some papers used Gibbs-sampling~\cite{ramage2009labeled, porteous2008fast} or variational Bayes techniques~\cite{zhai2012mr,chien2010dirichlet} rather than the EM-algorithm. However, these works do not provide statistical guarantees on the estimation of $\bW$ and the associated algorithms are computationally slow.
 
 {
  For the problem of estimation of matrix $\bA$, papers~\cite{arora2012learning,anandkumar2012spectral,arora2013practical,Ding2013,anandkumar2014tensor,bansal2014provable,Ke2017,bing2018fast}, to mention but a few, provided algorithms with provable statistical guarantees under {\it anchor word assumption}. They proposed various techniques based, for example, on analyzing co-occurrence matrices, tensors, or on recovering vertices of a simplex using SVD. Most of these papers, except for~\cite{Ke2017, bing2018fast}, do not work under the same statistical model as ours (cf. Section \ref{sec:intro}). Thus,  \cite{arora2012learning,arora2013practical,Ding2013} assume that topic-document matrix $\bW$ is randomly generated from some prior distribution. For a setting with no randomness, \cite{Mizutani14} proposes ellipsoidal rounding algorithm with application to topic models.
  Paper~\cite{mao2018overlapping} develops a generalized method to bind overlapping clustering models, including topic models. Moreover, for some classes of matrices, papers~\cite{Ke2017, bing2018fast} proposed minimax optimal algorithms of estimating $\bA$. Both~\cite{Ke2017, bing2018fast} impose {\it anchor word assumption} but their estimators are different. Thus, ~\cite{Ke2017} performs SVD on properly  normalized matrix $\bX$ followed by an exhaustive search over a $p$-dimensional simplex, while ~\cite{bing2018fast} proceeds by first recovering the anchor words and then deriving estimators of $\bA$ from a scaled version of matrix $\bX \bX^\T$. 
  }

\section{Simulations}
\label{sec:simulations}
\subsection{Synthetic Data}
  We first present the results of experiments on synthetic data. 
  We have performed simulations with different values of the  parameters $n,p,N$ and the number of topics $K$. Our objective was to observe the effect of each parameter on the Frobenius error between $\bW$ and its estimator $\hat{\bW}$ obtained by the SPOC  algorithm. We report the results for the SPOC algorithm without preconditioning step as it had a negligible impact on the performance of the method while being computationally demanding. As a benchmark, we use the LDA algorithm~\cite{blei2003latent}. For the experiments we use the Python implementation of SPOC\footnote{The code of SPOC algorithm is available at https://github.com/stat-ml/SPOC} and an implementation of the LDA algorithm available in Sklearn~\cite{scikit-learn}.

  Figures~\ref{fig:fig_n_1}-\ref{fig:fig_K_1} present an example of results that we have typically obtained in simulations. In Figures~\ref{fig:fig_n_1}-\ref{fig:fig_p_1} we take $K=3$ and the matrix $\bW$ has the following structure: $K$ rows of $\bW$ are canonical basis vectors, each of the remaining $n-K$ rows is generated independently using the Dirichlet distribution with parameter $\alpha = (0.1,0.15,0.2)$. In Figure~\ref{fig:fig_K_1}, where $K$ must vary, we define $\bW$ in a different way. Namely, for the $n-K$ rows that are not canonical basis vectors, each element \(W_{kj}\) is generated from the uniform distribution on $[0, 1]$ and then each row  of the matrix is normalized so as to have \(\sum_{k = 1}^K W_{ik} = 1\). For the matrix $\bA$, we take $K$ columns proportional to canonical basis vectors. The elements \(A_{kj}\) of matrix \(\bA\) in the remaining $p-K$ columns are obtained by generating numbers from the uniform distribution on $[0, 1]$ and then normalizing each row of the matrix to have \(\sum_{j = 1}^p A_{kj} = 1\). For given $\bW$ and $\bA$, the data matrix $\bX$ is generated according to the pLSI model  defined in Section~\ref{sec:intro}.
  For each value on the $x$-axes of the figures, we present the averaged result over 10 simulations. 
  
  We clearly retrieve the patterns indicated in Theorem~\ref{theorem:mainBound}, Corollary~\ref{corollary:mainBound} and \eqref{inf_bnd_l2}. Thus, the plots have a near \(\sqrt{n}\) behaviour in Figure~\ref{fig:fig_n_1} and a near \(1 / \sqrt{N}\) behaviour in Figure~\ref{fig:fig_N_1}. Figure~\ref{fig:fig_p_1} shows  weak dependence of the error of the SPOC algorithm on the size of the dictionary \(p\), which agrees with the bound obtained in Corollary~\ref{corollary:mainBound}. 
  {This can be interpreted as one of the advantages of our method over LDA. Indeed, for LDA we observe that the error increases significantly as $p$ grows.} Finally, we notice the linear dependence of the error on \(K\) as predicted by Corollary~\ref{corollary:mainBound}. 
  In all the experiments we observe that SPOC algorithm is very competitive with LDA while being much more stable.

  \begin{figure}[t]
    \centering
    \includegraphics[scale=0.3]{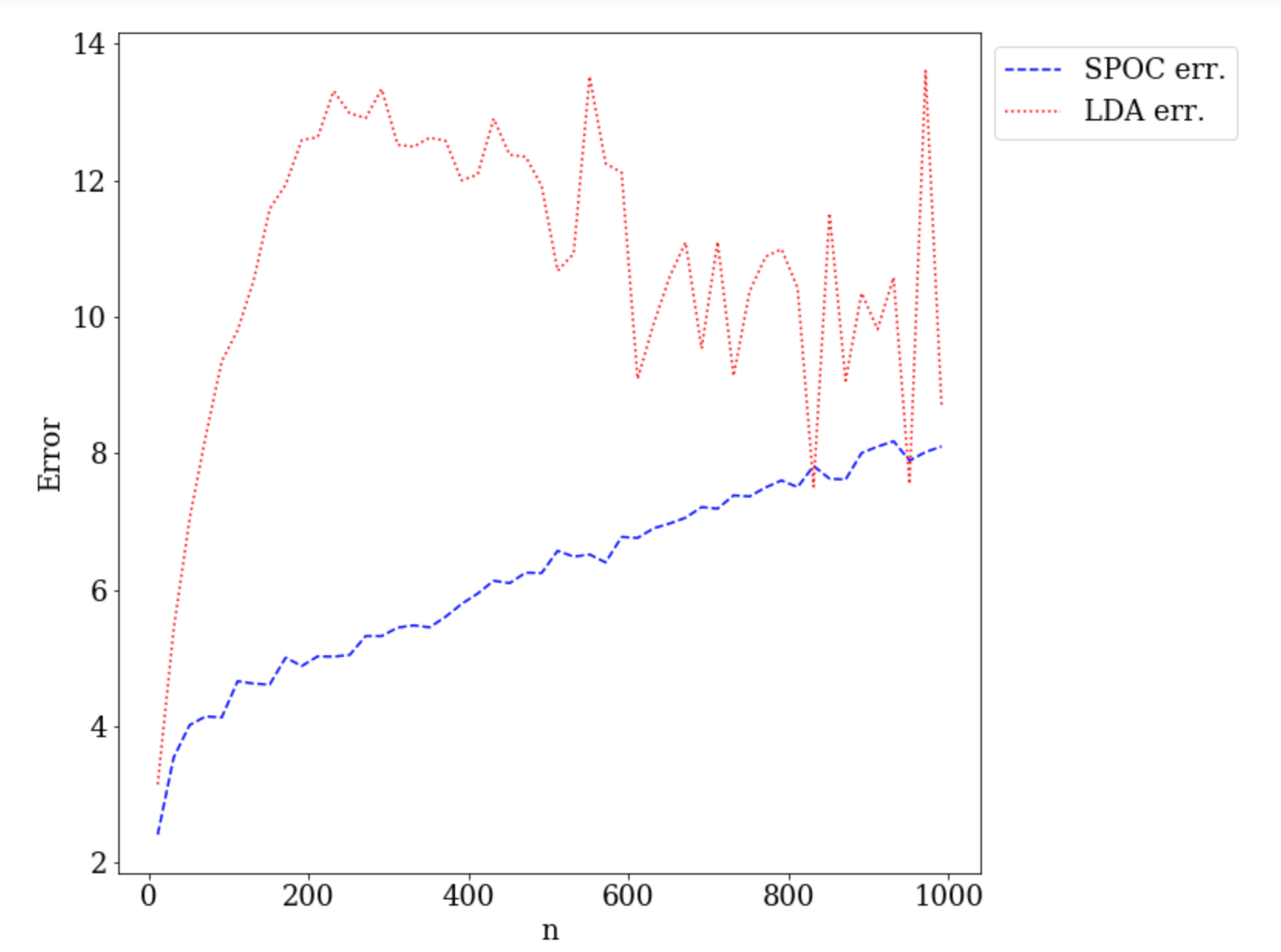}
    \caption{The $n$-dependency of  {$\min_{\mathbf{P}\in \mathcal{P}}\|\bW - \hat{\bW}\mathbf{P}\|_F$} using SPOC and LDA algorithms.
    Total number of words $p=5000$, number of sampled words in each document $N=200$. 
    }
  \label{fig:fig_n_1}
  \end{figure}
   
  \begin{figure}[H]
    \centering
    \includegraphics[scale=0.3]{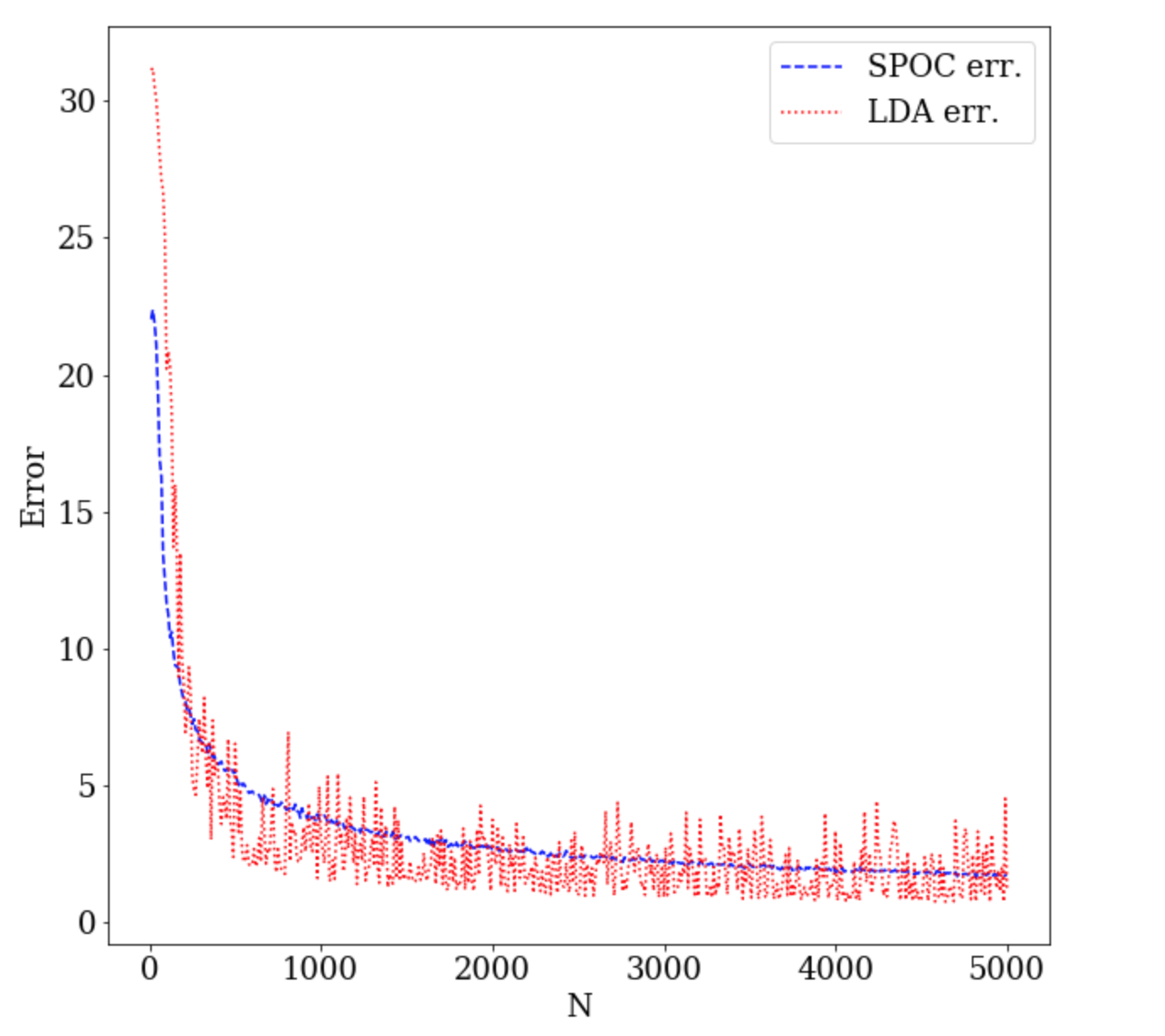}
    \caption{The $N$-dependency of {$\min_{\mathbf{P}\in \mathcal{P}}\|\bW - \hat{\bW}\mathbf{P}\|_F$} using SPOC and LDA algorithms.
    Total number of words $p=5000$, number of documents $n=1000$.
    }
  \label{fig:fig_N_1}
  \end{figure}

  \begin{figure}[H]
    \centering
    \includegraphics[scale=0.3]{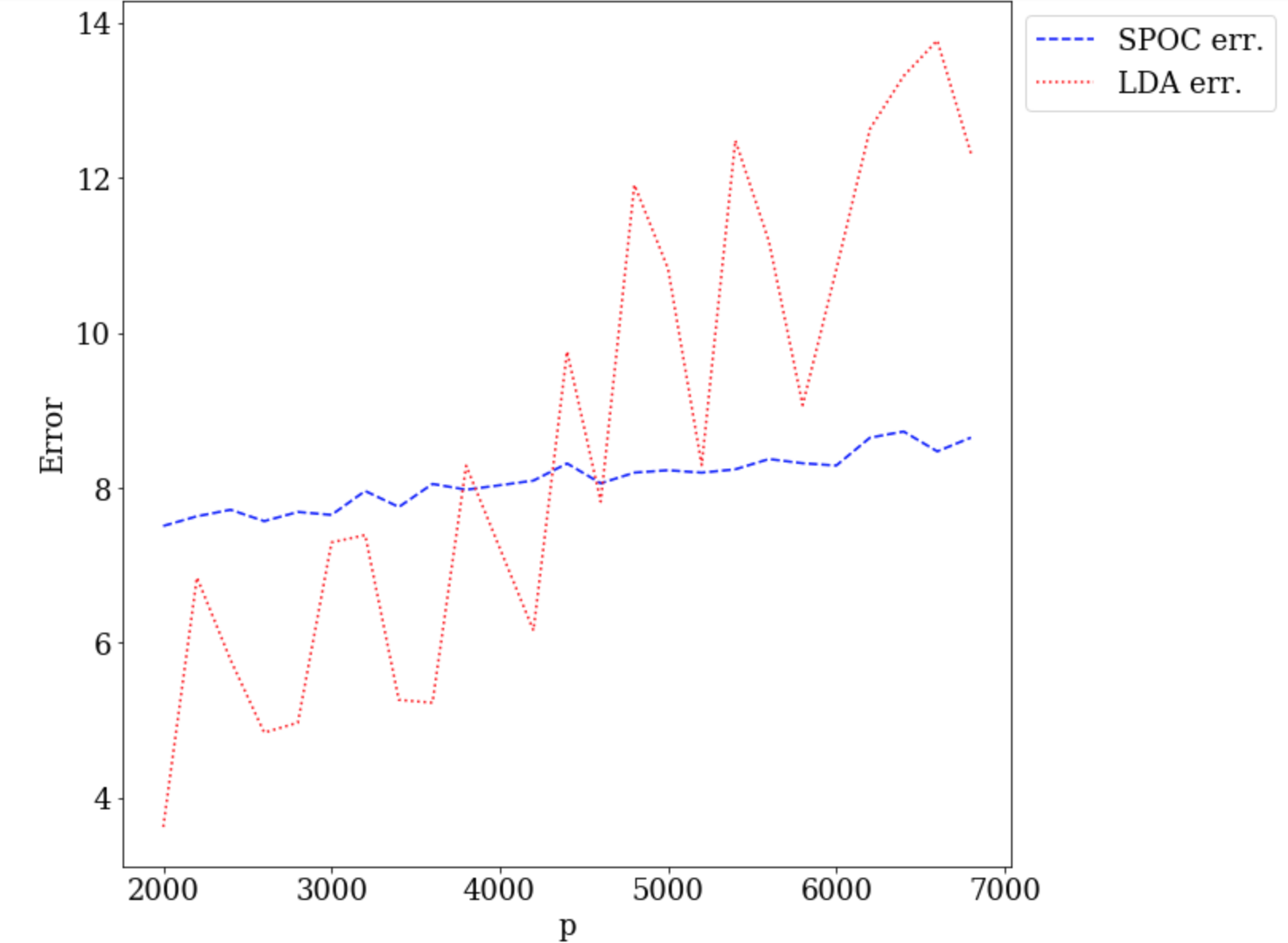}
    \caption{The $p$-dependency of  {$\min_{\mathbf{P}\in \mathcal{P}}\|\bW - \hat{\bW}\mathbf{P}\|_F$} using SPOC and LDA algorithms.
    {Number of documents} $n=1000$, number of sampled words in each document $N=200$.
    }
  \label{fig:fig_p_1}
  \end{figure}

  \begin{figure}[H]
    \centering
    \includegraphics[scale=0.3]{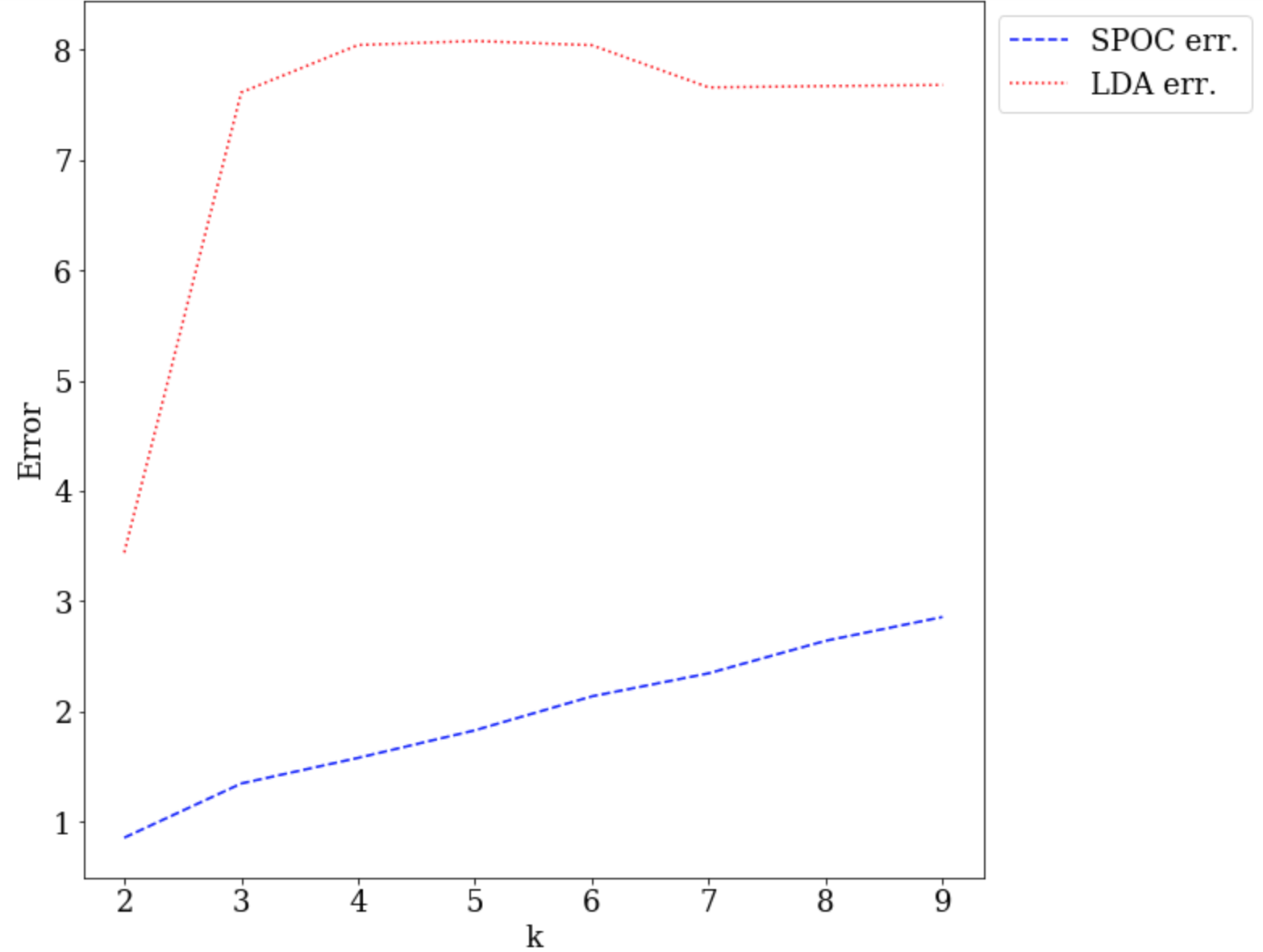}
    \caption{The $K$-dependency of {$\min_{\mathbf{P}\in \mathcal{P}}\|\bW - \hat{\bW}\mathbf{P}\|_F$} using SPOC and LDA algorithms.
    Total number of words $p=5000$, number of sampled words in each document $N=5000$.
    }
  \label{fig:fig_K_1}
  \end{figure}

  A numerical study of the SPOC estimator $\hat {\bA}$ of matrix $\bA$  is deferred to Section~\ref{sec:topic_word_experiments} of the Appendix. It shows that $\hat {\bA}$ behaves similarly to the corresponding LDA estimator while being more stable.

\subsection{Corpus of NIPS abstracts}
  We now illustrate the performance of our algorithm applying it to the data set  
  of full texts of NIPS papers\footnote{The link to the dataset: https://archive.ics.uci.edu/ml/datasets/NIPS+Conference+Papers+1987-2015}~\cite{Perrone2016Nips}.
  This data set contains the distribution of words in the full text of the NIPS conference papers published from 1987 to 2015. The data set has the form of a $11463 \times 5811$ matrix of word counts containing $11463$ words and $5811$ NIPS conference papers. Each column contains the number of times each word appears in the corresponding document.

  We start by pre-processing the data. We first remove all the documents with less than 150 words. Then we remove from the resulting dictionary the stop words and the words that appear in less that 150 documents. This results in a database of $5801$ documents with a dictionary of $6380$ words.
  Note that we do not actually have access to the true topics of each document. We also do not have access to the true number of topics. In order to compare our method to LDA,  we proceeded as follows. For each value of $K = 3, \dots, 10$, we first computed the LDA estimator $\tilde{\bW}$ of the document-topic matrix  and the LDA estimator $\tilde{\bA}$ of the topic-word matrix. Next, with the underlying matrix $\tilde{\bPi}=\tilde{\bW}\tilde{\bA}$, for each value of $K$ we simulated 10 matrices $\tilde{\bX}$ with $N=200$ sampled words {according to pLSI model}. For each matrix $\tilde{\bX}$, we estimated $\tilde{\bW}$ using both LDA and  SPOC algorithms. Finally, for each $K$ we computed the mean error over 10 simulations. The resulting comparison as function of $K$ is presented in Figure~\ref{fig:fig_K_2}. We can observe that SPOC systematically outperforms the LDA algorithm, except for $K = 2$.
  
  \begin{figure}[H]
    \centering
    \includegraphics[scale=0.3]{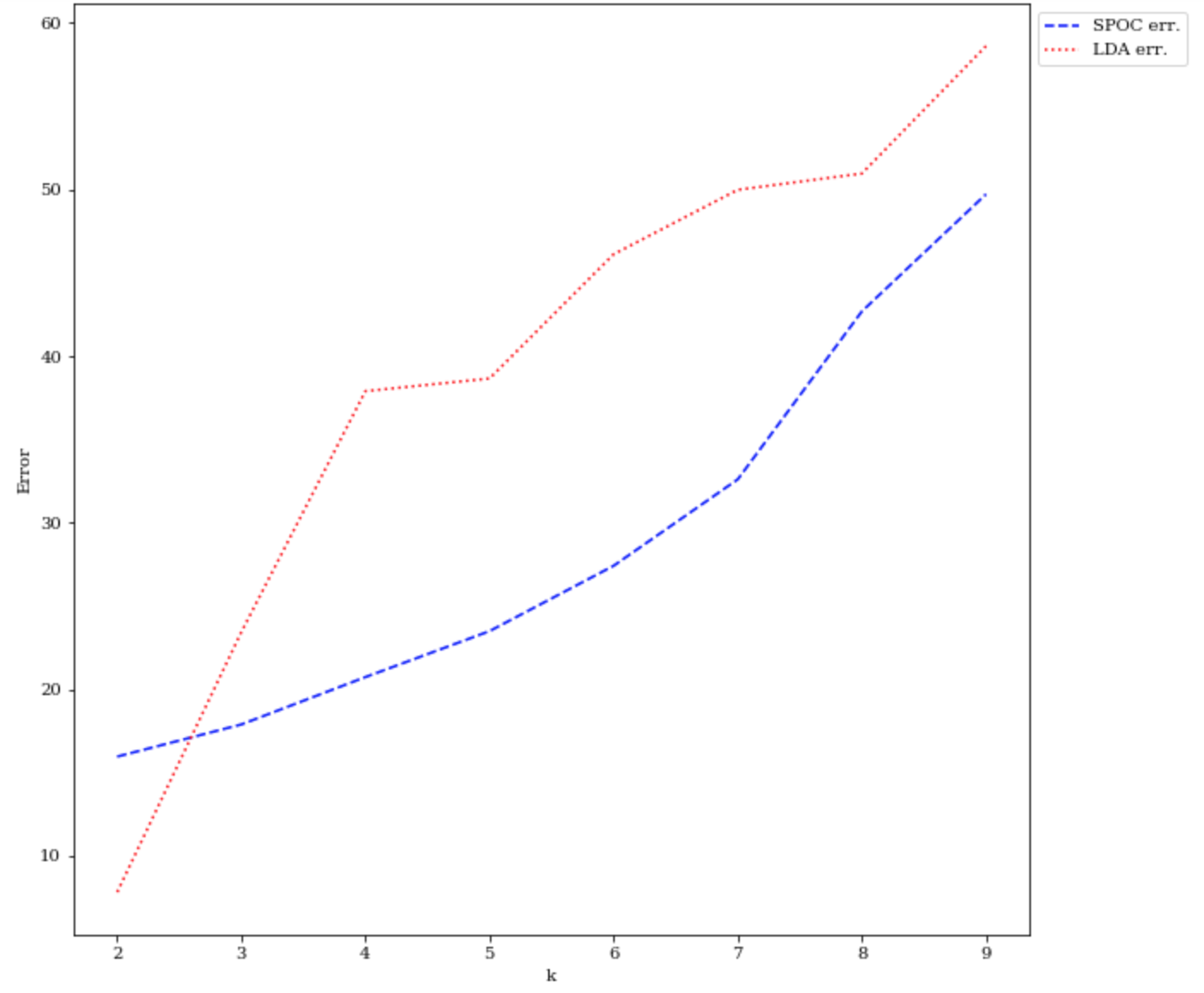}
    \caption{The $K$-dependency of  {$\min_{\mathbf{P}\in \mathcal{P}}\|\bW - \hat{\bW}\mathbf{P}\|_F$}  using SPOC and LDA algorithms on semi-synthetic data. Matrix $\tilde{\bW}$ is the LDA estimator on the NIPS data set ($n=5081$ documents, $p=6380$ words), and $\hat{\bW}$ is the LDA or SPOC estimator on data simulated from
    $\tilde{\bPi}$. 
    }
  \label{fig:fig_K_2}
  \end{figure}

  Next, we investigate whether our estimator of matrix $\bW$ helps to well classify the documents in the NIPS corpus. We apply the simplest possible classifier based on the obtained SPOC estimator~$\hat{\bW}$, namely, we classify article $i$ to the topic that has the maximum value of $\hat{W}_{ik}$ for $k = 1, \dots, K$. We consider the number of topics $K = 3$. While apparently for NIPS papers some words (such as ``learning'' or ``model'') are more frequent than others in the full corpus, other words can be more frequent for particular topics. Therefore, for each topic we present the words that have the highest difference {between their frequency for this topic and their maximum frequency in other topics}. We clearly see that the obtained topics are semantically well separated, cf. Table~\ref{tab:topics}.

  \begin{table}[H]
    \centering

    \caption{Top 10 words, which have the highest difference in frequency for each topic compared to other topics. The three topics were identified by SPOC method.}
    \begin{tabular}{|c|c|c|c|}
      \hline
      & ``Neural networks'' & ``{Statistical} 
      learning'' & ``Algorithms and theory''
      \\
      \hline
      1 & network & model & algorithm
      \\
      \hline
      2 & input & data & learning
      \\
      \hline
      3 & neural & image & function
      \\
      \hline
      4 & neurons & distribution & problem
      \\
      \hline
      5 & units & inference & set
      \\
      \hline
      6 & output & likelihood & theorem
      \\
      \hline
      7 & layer & latent & bound
      \\
      \hline
      8 & neuron & prior & matrix
      \\
      \hline
      9 & system & Gaussian & loss
      \\
      \hline
      10 & synaptic & parameters & error
      \\
      \hline
    \end{tabular}
  \label{tab:topics}
  \end{table}

\section{Conclusion}
\label{sec:discussion}
  In the present paper, we proposed the SPOC algorithm, which is a computationally efficient procedure to estimate the document-topic matrix in topic models with known or unknown number of topics $K$. It is based on the Successive Projection Algorithm used to recover the vertices of a $K$-dimensional simplex in the context of separable matrix factorization. We developed the statistical analysis of SPOC algorithm under the \textit{anchor document assumption} requiring that, for each topic, there is a document devoted solely to this topic. We proved that the proposed method is near minimax optimal for estimation of the document-topic matrix under the Frobenius norm and the $\ell_1$-norm. 
  As an element of our analysis, we derived  a bound on  concentration of  matrices with independent multinomial columns that may be of independent interest. The theoretical results are supported by empirical evidence demonstrating a good performance of the SPOC algorithm  and its advantages compared to LDA.
  
  \medskip
  
  {\bf Acknowledgement.} The research of Suzanne Sigalla and Alexandre B. Tsybakov is supported by the grant of French National Research Agency (ANR) "Investissements d'Avenir" LabEx Ecodec/ANR-11-LABX-0047.


\appendix

\section{Tools}
\label{sec:tools}
\subsection{Matrix Perturbation Bounds}
\label{subsec:perturbation}
  In this section, we provide some facts about matrix perturbation that will be used in the proofs. We start with the following lemma, which is a variant of Davis-Kahan theorem.
  \begin{proposition}[Lemma 5.1 in~\cite{Lei2015}]
  \label{prop:Davis-Kahan}
   Let \({\bf M} \in \RR^{\nsize \t \nsize}\) be a rank \(K\) symmetric matrix with smallest nonzero eigenvalue \(\lambda_{K}({\bf M})\), and let \(\hat{\bf M}\in \RR^{\nsize \t \nsize}\) be any symmetric matrix. Let $\empiricalLeftEigenvectors(\hat{\bf M})\in \RR^{\nsize \t K}$ and $\probLeftEigenvectors ({\bf M})\in \RR^{\nsize \t K}$ be the matrices of \(K\) leading  eigenvectors of \(\hat{\bf M}\) and \({\bf M}\), respectively. Then there exists a \(K \t K\) orthogonal matrix \(\orthMatrix\) such that
   \begin{EQA}[c]
     \|\empiricalLeftEigenvectors(\hat{\bf M}) - \probLeftEigenvectors({\bf M}) \orthMatrix\|_{F} \le \frac{2 \sqrt{2 K} \|\hat{\bf M} - {\bf M}\|}{\lambda_{K}({\bf M})}.
   \end{EQA}
  \end{proposition}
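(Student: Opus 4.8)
The plan is to obtain this as a special case of the Davis--Kahan $\sin\Theta$ theorem, combined with the elementary Procrustes estimate that converts a bound on canonical angles into a bound on the best orthogonal alignment of two orthonormal frames. (Since the statement is quoted from Lei--Rinaldo, a one-line proof is ``invoke the reference''; what follows is the argument behind it.)

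First I would fix the spectral picture. As \({\bf M}\) is symmetric of rank \(K\), it has exactly \(K\) nonzero eigenvalues, each of magnitude at least \(\lambda_K({\bf M})>0\), while the remaining \(\nsize-K\) eigenvalues equal \(0\). The columns of \(\probLeftEigenvectors({\bf M})\) form an orthonormal basis of the range of \({\bf M}\), and the spectral gap separating the retained eigenvalues from the discarded (zero) ones is exactly \(\lambda_K({\bf M})-0=\lambda_K({\bf M})\). This is the quantity that must appear in the denominator.

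Next I would control the canonical angles. Let \(\Theta\) denote the diagonal matrix of the (at most \(K\)) principal angles between \(\mathrm{col}(\empiricalLeftEigenvectors(\hat{\bf M}))\) and \(\mathrm{col}(\probLeftEigenvectors({\bf M}))\). Because the statement imposes no eigenvalue-separation hypothesis on \(\hat{\bf M}\), whose spectrum may interlace that of \({\bf M}\) arbitrarily, I would use the Frobenius form of Davis--Kahan (of the Yu--Wang--Samworth type) whose gap condition refers only to the spectrum of \({\bf M}\), giving \(\|\sin\Theta\|_F \le 2\sqrt{K}\,\|\hat{\bf M}-{\bf M}\|/\lambda_K({\bf M})\); the leading factor \(2\) is exactly the price of not assuming control on the spectrum of \(\hat{\bf M}\).

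Finally I would turn this into the Frobenius bound on the aligned frames. Choosing \(\orthMatrix\) as the solution of the orthogonal Procrustes problem \(\min_{\orthMatrix}\|\empiricalLeftEigenvectors(\hat{\bf M})-\probLeftEigenvectors({\bf M})\orthMatrix\|_F\), read off from the SVD \(\probLeftEigenvectors({\bf M})^{\T}\empiricalLeftEigenvectors(\hat{\bf M})=\bar{\orthMatrix}(\cos\Theta)\tilde{\orthMatrix}^{\T}\), one computes \(\|\empiricalLeftEigenvectors(\hat{\bf M})-\probLeftEigenvectors({\bf M})\orthMatrix\|_F^2=2\sum_i(1-\cos\theta_i)\), and since each principal angle lies in \([0,\pi/2]\) the bound \(1-\cos\theta_i\le 1-\cos^2\theta_i=\sin^2\theta_i\) gives \(\|\empiricalLeftEigenvectors(\hat{\bf M})-\probLeftEigenvectors({\bf M})\orthMatrix\|_F\le \sqrt{2}\,\|\sin\Theta\|_F\). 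Combining the two bounds yields the constant \(\sqrt{2}\cdot 2\sqrt{K}=2\sqrt{2K}\), as claimed. The only genuinely delicate point is the choice of Davis--Kahan variant: the textbook two-sided version needs a separation assumption on \(\hat{\bf M}\) that is unavailable here, so the work is to use the one-sided formulation and to observe that when \(\|\hat{\bf M}-{\bf M}\|\ge \lambda_K({\bf M})\) the inequality is in any case trivial, because the left-hand side never exceeds \(\sqrt{2K}\) while the right-hand side is then at least \(2\sqrt{2K}\).
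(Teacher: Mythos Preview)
The paper does not supply a proof of this proposition; it is imported verbatim as Lemma~5.1 of Lei--Rinaldo and used as a black box. Your sketch is correct and is precisely the argument that underlies that reference: invoke the one-sided (Yu--Wang--Samworth) Davis--Kahan bound, whose gap depends only on the spectrum of ${\bf M}$ and here equals $\lambda_K({\bf M})$, to control $\|\sin\Theta\|_F$, and then pass to the best orthogonal alignment via the Procrustes identity $\|\empiricalLeftEigenvectors-\probLeftEigenvectors\orthMatrix\|_F^2=2\sum_i(1-\cos\theta_i)\le 2\|\sin\Theta\|_F^2$.

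One small technical remark: the Yu--Wang--Samworth statement is phrased for a contiguous block of eigenvalues, whereas a general symmetric rank-$K$ matrix ${\bf M}$ may have nonzero eigenvalues of both signs, so the ``top $K$ in magnitude'' block need not be contiguous in the usual ordering. This is harmless, since one may apply the argument to ${\bf M}^2$ and $\hat{\bf M}^2$ (same eigenvectors, nonnegative eigenvalues, gap $\lambda_K({\bf M})^2$, perturbation bounded by $(\|\hat{\bf M}\|+\|{\bf M}\|)\|\hat{\bf M}-{\bf M}\|$) or simply note that in every application in the paper the proposition is invoked with ${\bf M}=\bPi\bPi^\T$ or ${\bf M}=\bPi^\T\bPi$, which are positive semidefinite, so the issue does not arise. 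Your closing observation that the inequality is vacuous when $\|\hat{\bf M}-{\bf M}\|\ge\lambda_K({\bf M})$ is exactly the device used in the Yu--Wang--Samworth proof to dispense with any spectral hypothesis on $\hat{\bf M}$.
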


  \begin{corollary}\label{lemma:eigenvectorConsistency}
    Let $\bPi$ and $\bX$ be matrices with singular value decompositions given by~\eqref{svc:Pi} and~\eqref{svc:X}. Then there exist  \(\ntopics \t \ntopics\) orthogonal matrices \(\orthMatrix\) and \(\widetilde{\orthMatrix}\) such that
    \begin{EQA}[c]\label{DKSingularVectors}
      \|\empiricalLeftEigenvectors - \probLeftEigenvectors \orthMatrix\|_{F} 
      \le \frac{2\sqrt{2K}(\|\bX\| + \|\bPi\|)\|\bX - \bPi\|}{\lambda_{\ntopics}^2( \bPi)}
    \end{EQA}
    and
    \begin{EQA}[c]\label{DKSingularVectorsRight}
      \|\empiricalRightEigenvectors - \probRightEigenvectors \widetilde{\orthMatrix}\|_{F}
      \le 
      \frac{2\sqrt{2K}(\|\bX\| + \|\bPi\|)\|\bX - \bPi\|}{\lambda_{\ntopics}^2( \bPi)}.
    \end{EQA}
    Furthermore, if \(\|\bX - \bPi\| \le \frac{1}{2} \lambda_{\ntopics}(\bPi)\) then 
    \begin{EQA}[c]\label{DKSingularVectors-bis}
    \max \big(\|\empiricalLeftEigenvectors - \probLeftEigenvectors \orthMatrix\|_{F}, \|\empiricalRightEigenvectors - \probRightEigenvectors \widetilde{\orthMatrix}\|_{F}\big)
      \le 
      \frac{5\sqrt{2K}\kappa(\bPi)\|\bX - \bPi\|}{\lambda_{\ntopics}( \bPi)}.
    \end{EQA}
  \end{corollary}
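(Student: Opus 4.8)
The plan is to reduce the singular-vector statement to the symmetric eigenvector bound of Proposition~\ref{prop:Davis-Kahan} by passing to Gram matrices. The columns of $\probLeftEigenvectors$ are exactly the $\ntopics$ leading eigenvectors of the symmetric rank-$\ntopics$ matrix $\bPi\bPi^\T = \probLeftEigenvectors\probEigenvalues^2\probLeftEigenvectors^\T$, whose smallest nonzero eigenvalue equals $\lambda_{\ntopics}^2(\bPi)$; likewise the columns of $\empiricalLeftEigenvectors$ are the $\ntopics$ leading eigenvectors of $\bX\bX^\T$. Applying Proposition~\ref{prop:Davis-Kahan} with ${\bf M}=\bPi\bPi^\T$ and $\hat{\bf M}=\bX\bX^\T$ therefore produces an orthogonal matrix $\orthMatrix$ with
\[
\|\empiricalLeftEigenvectors - \probLeftEigenvectors\orthMatrix\|_F \le \frac{2\sqrt{2\ntopics}\,\|\bX\bX^\T - \bPi\bPi^\T\|}{\lambda_{\ntopics}^2(\bPi)}.
\]

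First I would control the Gram-matrix perturbation. Writing the telescoping decomposition
\[
\bX\bX^\T - \bPi\bPi^\T = \bX(\bX-\bPi)^\T + (\bX-\bPi)\bPi^\T
\]
and using submultiplicativity of the spectral norm gives $\|\bX\bX^\T - \bPi\bPi^\T\| \le (\|\bX\|+\|\bPi\|)\|\bX-\bPi\|$, which substituted into the previous display is precisely \eqref{DKSingularVectors}. The right-vector bound \eqref{DKSingularVectorsRight} follows identically, now with ${\bf M}=\bPi^\T\bPi$, $\hat{\bf M}=\bX^\T\bX$, and the decomposition $\bX^\T\bX - \bPi^\T\bPi = \bX^\T(\bX-\bPi) + (\bX-\bPi)^\T\bPi$.

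Finally, for the refined bound \eqref{DKSingularVectors-bis} I would invoke the assumption $\|\bX-\bPi\|\le \tfrac12\lambda_{\ntopics}(\bPi)$ together with Weyl's inequality. Since $\|\bPi\|=\lambda_1(\bPi)$ and $\lambda_{\ntopics}(\bPi)\le\lambda_1(\bPi)$, Weyl gives $\|\bX\|\le\|\bPi\|+\|\bX-\bPi\|\le\tfrac32\lambda_1(\bPi)$, so that $\|\bX\|+\|\bPi\|\le\tfrac52\lambda_1(\bPi)$. Inserting this into \eqref{DKSingularVectors} and \eqref{DKSingularVectorsRight} and recalling $\kappa(\bPi)=\lambda_1(\bPi)/\lambda_{\ntopics}(\bPi)$ turns the factor $\tfrac52\lambda_1(\bPi)/\lambda_{\ntopics}^2(\bPi)$ into $\tfrac52\kappa(\bPi)/\lambda_{\ntopics}(\bPi)$, yielding the common bound $\tfrac{5\sqrt{2\ntopics}\kappa(\bPi)\|\bX-\bPi\|}{\lambda_{\ntopics}(\bPi)}$ for both distances.

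The one genuinely substantive point is the spectral reduction itself: Proposition~\ref{prop:Davis-Kahan} is stated for eigenvectors of symmetric matrices, so the step to verify carefully is that the left (resp. right) singular subspaces of $\bPi$ and $\bX$ coincide with the leading eigenspaces of the Gram matrices and that the governing spectral gap is $\lambda_{\ntopics}^2(\bPi)$, not $\lambda_{\ntopics}(\bPi)$. Once this is in place, everything else is spectral-norm bookkeeping via the two telescoping identities and a single application of Weyl's inequality.
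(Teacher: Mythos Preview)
Your proposal is correct and follows essentially the same route as the paper: apply Proposition~\ref{prop:Davis-Kahan} to the Gram matrices $\bPi\bPi^\T$, $\bX\bX^\T$ (and their transposes), bound $\|\bX\bX^\T-\bPi\bPi^\T\|$ by $(\|\bX\|+\|\bPi\|)\|\bX-\bPi\|$, and then use $\|\bX\|\le \|\bPi\|+\tfrac12\lambda_{\ntopics}(\bPi)\le\tfrac32\|\bPi\|$ for the refined bound. The paper phrases that last step as the triangle inequality rather than Weyl, but the content is identical.
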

  \begin{proof}
    Applying Proposition~\ref{prop:Davis-Kahan} to matrices $\bPi\bPi^\T$ and $\bX\bX^\T$ we get 
    \begin{EQA}[c]
      \|\empiricalLeftEigenvectors - \probLeftEigenvectors\orthMatrix\|_F\leq \dfrac{2\sqrt{2K}\|\bPi\bPi^\T-\bX\bX^\T\|}{\lambda_{\ntopics}(\bPi\bPi^\T)}
      \leq 
      \frac{2\sqrt{2K}(\|\bX\| + \|\bPi\|)\|\bX - \bPi\|}{\lambda_{\ntopics}^2( \bPi)}.
    \end{EQA}
    Similarly, inequality~\eqref{DKSingularVectorsRight} is obtained by applying Proposition~\ref{prop:Davis-Kahan}
    to matrices $\bPi^\T\bPi$ and $\bX^\T\bX$.
    Next, if  \(\|\bX - \bPi\| \le \frac{1}{2} \lambda_{\ntopics}(\bPi)\) then due to the triangle inequality
   we have $\|\bX\| \le \|\bPi\| + \frac{1}{2} \lambda_{\ntopics}(\bPi) \le \frac{3}{2}\|\bPi\|$. Combining this fact with~\eqref{DKSingularVectors} and~\eqref{DKSingularVectorsRight} we obtain~\eqref{DKSingularVectors-bis}. 
  \end{proof}
  We will also need the following bounds for matrices of singular values $\adjacencyEigenvalues$ and $\probEigenvalues$.
  \begin{lemma}
    \label{corollary:eigenvalues}
    Let the assumptions of Corollary~\ref{lemma:eigenvectorConsistency} hold. Let $\adjacencyEigenvalues$ and $\probEigenvalues$ be diagonal \(\ntopics \times \ntopics\)-matrices of \(\ntopics\) largest singular values of \(\bX\) and \(\bPi\), respectively, cf.~\eqref{svc:Pi} and~\eqref{svc:X}. If \(\|\bX - \bPi\| \le \frac{1}{2} \lambda_{\ntopics}(\bPi)\) then 
    \begin{EQA}[c]
      \|\adjacencyEigenvalues - \orthMatrix^{\T} \probEigenvalues \widetilde{\orthMatrix}\|
      \le
      C\kappa^2(\bPi)\sqrt{\ntopics} ~ \|\bX - \bPi\|
    \end{EQA}
    and
    \begin{EQA}[c]\label{DKSingularValues}
      \|\adjacencyEigenvalues^{-1} - \widetilde{\orthMatrix}^{\T} \probEigenvalues^{-1} \orthMatrix\|
      \le
      C\kappa^2(\bPi)\sqrt{\ntopics}\frac{\|\bX - \bPi\|}{ \lambda^2_{\ntopics}(\bPi)},
    \end{EQA}
    where the orthogonal matrices \(\orthMatrix\), \(\widetilde{\orthMatrix}\) are the same as in Corollary~\ref{lemma:eigenvectorConsistency}.
  \end{lemma}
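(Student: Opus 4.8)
The plan is to recover the diagonal singular-value matrices from the singular vectors and then compare them through a telescoping decomposition. Because the columns of $\empiricalLeftEigenvectors$ and $\empiricalRightEigenvectors$ are orthonormal and orthogonal to the trailing blocks $\empiricalLeftEigenvectors_1,\empiricalRightEigenvectors_1$ appearing in \eqref{svc:X}, multiplying that identity on the left by $\empiricalLeftEigenvectors^{\T}$ and on the right by $\empiricalRightEigenvectors$ annihilates the trailing term and gives $\adjacencyEigenvalues = \empiricalLeftEigenvectors^{\T}\bX\empiricalRightEigenvectors$; likewise \eqref{svc:Pi} yields $\probEigenvalues = \probLeftEigenvectors^{\T}\bPi\probRightEigenvectors$. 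Writing $\bar{\probLeftEigenvectors} := \probLeftEigenvectors\orthMatrix$ and $\bar{\probRightEigenvectors} := \probRightEigenvectors\widetilde{\orthMatrix}$ and using that $\orthMatrix,\widetilde{\orthMatrix}$ are orthogonal, the matrix to be bounded becomes
\[
  \adjacencyEigenvalues - \orthMatrix^{\T}\probEigenvalues\widetilde{\orthMatrix} = \empiricalLeftEigenvectors^{\T}\bX\empiricalRightEigenvectors - \bar{\probLeftEigenvectors}^{\T}\bPi\bar{\probRightEigenvectors}.
\]

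Next I would insert the intermediate matrix $\bar{\probLeftEigenvectors}^{\T}\bX\bar{\probRightEigenvectors}$ and split the difference as $\bigl(\empiricalLeftEigenvectors^{\T}\bX\empiricalRightEigenvectors - \bar{\probLeftEigenvectors}^{\T}\bX\bar{\probRightEigenvectors}\bigr) + \bar{\probLeftEigenvectors}^{\T}(\bX-\bPi)\bar{\probRightEigenvectors}$, and then split the first bracket once more (adding and subtracting $\empiricalLeftEigenvectors^{\T}\bX\bar{\probRightEigenvectors}$) into $\empiricalLeftEigenvectors^{\T}\bX(\empiricalRightEigenvectors-\bar{\probRightEigenvectors}) + (\empiricalLeftEigenvectors-\bar{\probLeftEigenvectors})^{\T}\bX\bar{\probRightEigenvectors}$. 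Each factor is now controllable: $\|\bar{\probLeftEigenvectors}\| = \|\bar{\probRightEigenvectors}\| = \|\empiricalLeftEigenvectors\| = 1$ since all have orthonormal columns, the term $\bar{\probLeftEigenvectors}^{\T}(\bX-\bPi)\bar{\probRightEigenvectors}$ is at most $\|\bX-\bPi\|$, and the two cross terms are at most $\|\bX\|\bigl(\|\empiricalRightEigenvectors-\bar{\probRightEigenvectors}\| + \|\empiricalLeftEigenvectors-\bar{\probLeftEigenvectors}\|\bigr)$. Here I would use $\|\bX\|\le\tfrac32\|\bPi\| = \tfrac32\lambda_1(\bPi)$ (from $\|\bX-\bPi\|\le\tfrac12\lambda_{\ntopics}(\bPi)$ and the triangle inequality, exactly as in the proof of Corollary~\ref{lemma:eigenvectorConsistency}), and control the subspace perturbations by the spectral-norm version of \eqref{DKSingularVectors-bis}, namely $\max\bigl(\|\empiricalLeftEigenvectors-\bar{\probLeftEigenvectors}\|,\|\empiricalRightEigenvectors-\bar{\probRightEigenvectors}\|\bigr)\le 5\sqrt{2K}\,\kappa(\bPi)\|\bX-\bPi\|/\lambda_{\ntopics}(\bPi)$. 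Multiplying and using $\lambda_1(\bPi)/\lambda_{\ntopics}(\bPi) = \kappa(\bPi)$ collapses the product into a factor $\kappa^2(\bPi)$, which yields the first claimed inequality $\|\adjacencyEigenvalues - \orthMatrix^{\T}\probEigenvalues\widetilde{\orthMatrix}\| \le C\kappa^2(\bPi)\sqrt{\ntopics}\,\|\bX-\bPi\|$.

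For the bound on the inverses I would apply the resolvent identity. Setting $\bB := \orthMatrix^{\T}\probEigenvalues\widetilde{\orthMatrix}$, orthogonality of $\orthMatrix,\widetilde{\orthMatrix}$ gives $\bB^{-1} = \widetilde{\orthMatrix}^{\T}\probEigenvalues^{-1}\orthMatrix$, and $\adjacencyEigenvalues^{-1} - \bB^{-1} = \adjacencyEigenvalues^{-1}(\bB-\adjacencyEigenvalues)\bB^{-1}$, so that
\[
  \|\adjacencyEigenvalues^{-1} - \widetilde{\orthMatrix}^{\T}\probEigenvalues^{-1}\orthMatrix\| \le \|\adjacencyEigenvalues^{-1}\|\,\|\adjacencyEigenvalues-\bB\|\,\|\bB^{-1}\|.
\]
The middle factor is the bound just established, $\|\bB^{-1}\| = 1/\lambda_{\ntopics}(\bPi)$, and $\|\adjacencyEigenvalues^{-1}\| = 1/\widehat\lambda_{\ntopics} \le 2/\lambda_{\ntopics}(\bPi)$, where the last step uses Weyl's inequality together with $\|\bX-\bPi\|\le\tfrac12\lambda_{\ntopics}(\bPi)$ to get $\widehat\lambda_{\ntopics}\ge\tfrac12\lambda_{\ntopics}(\bPi)$. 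Multiplying the three factors produces the second claimed bound, of order $\kappa^2(\bPi)\sqrt{\ntopics}\,\|\bX-\bPi\|/\lambda_{\ntopics}^2(\bPi)$.

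The only mildly delicate points are bookkeeping rather than conceptual. First, one must be explicit that the trailing SVD block in \eqref{svc:X} is annihilated when forming $\empiricalLeftEigenvectors^{\T}\bX\empiricalRightEigenvectors$, which is where the mutual orthogonality of the two SVD blocks enters. Second, and more importantly, one must ensure that the a priori dangerous factor $\lambda_1(\bPi)=\|\bX\|$ produced by the cross terms is always paired with a $1/\lambda_{\ntopics}(\bPi)$ coming from the subspace-perturbation bound, so that it collapses to $\kappa^2(\bPi)$ and no uncontrolled $\lambda_1(\bPi)$ survives. No sharper estimate (such as bounding $\probRightEigenvectors^{\T}(\empiricalRightEigenvectors-\bar{\probRightEigenvectors})$ directly) is required, since the crude spectral bounds already deliver the stated rate.
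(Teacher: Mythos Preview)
Your proof is correct, and the inverse part (resolvent identity plus Weyl to bound $\widehat\lambda_{\ntopics}$ from below) is exactly what the paper does. For the first inequality, however, you take a slightly different path. The paper works at the level of the $n\times p$ truncated SVDs: it first uses Weyl's inequality to get $\|\empiricalLeftEigenvectors\adjacencyEigenvalues\empiricalRightEigenvectors^{\T} - \probLeftEigenvectors\probEigenvalues\probRightEigenvectors^{\T}\|\le 2\|\bX-\bPi\|$, then applies a reverse triangle inequality to the telescoping $\empiricalLeftEigenvectors\adjacencyEigenvalues\empiricalRightEigenvectors^{\T} - \probLeftEigenvectors\probEigenvalues\probRightEigenvectors^{\T} = \probLeftEigenvectors\orthMatrix(\adjacencyEigenvalues-\orthMatrix^{\T}\probEigenvalues\widetilde{\orthMatrix})\empiricalRightEigenvectors^{\T} + (\empiricalLeftEigenvectors-\probLeftEigenvectors\orthMatrix)\adjacencyEigenvalues\empiricalRightEigenvectors^{\T} + \probLeftEigenvectors\probEigenvalues\widetilde{\orthMatrix}(\empiricalRightEigenvectors-\probRightEigenvectors\widetilde{\orthMatrix})^{\T}$ to extract $\|\adjacencyEigenvalues-\orthMatrix^{\T}\probEigenvalues\widetilde{\orthMatrix}\|$. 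You instead compress immediately to $K\times K$ via the identities $\adjacencyEigenvalues=\empiricalLeftEigenvectors^{\T}\bX\empiricalRightEigenvectors$ and $\orthMatrix^{\T}\probEigenvalues\widetilde{\orthMatrix}=\bar{\probLeftEigenvectors}^{\T}\bPi\bar{\probRightEigenvectors}$, and then use a forward triangle inequality on a three-term telescope. Your route is a touch more direct (no Weyl step for the truncated SVD, no reverse triangle), while the paper's route avoids explicitly invoking the orthogonality $\empiricalLeftEigenvectors^{\T}\empiricalLeftEigenvectors_1=0$. Either way the same Davis--Kahan bounds from Corollary~\ref{lemma:eigenvectorConsistency} do the work, and the final estimate is identical.
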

  
  \begin{proof}
    
    Applying Weyl's inequality~\cite[Theorem C.6]{Giraud-book} we get 
    \begin{EQA}[c]
      \|\empiricalLeftEigenvectors \adjacencyEigenvalues \empiricalRightEigenvectors^{\T} - \probLeftEigenvectors \probEigenvalues \probRightEigenvectors^{\T}\| \le 2\|\bPi  - \bX\|
    \end{EQA}
    and further
    \begin{EQA}
      \|\empiricalLeftEigenvectors \adjacencyEigenvalues \empiricalRightEigenvectors^{\T} - \probLeftEigenvectors \probEigenvalues  \probRightEigenvectors^{\T}\|
      &\ge&
      \|\probLeftEigenvectors \orthMatrix (\adjacencyEigenvalues - \orthMatrix^{\T} \probEigenvalues \widetilde{\orthMatrix}) \empiricalRightEigenvectors^{\T}\|
      \\
      && -
      \|(\empiricalLeftEigenvectors - \probLeftEigenvectors \orthMatrix) \adjacencyEigenvalues \empiricalRightEigenvectors^{\T}\|
      -
      \|\probLeftEigenvectors \probEigenvalues \widetilde{\orthMatrix} (\empiricalRightEigenvectors - \probRightEigenvectors \widetilde{\orthMatrix})^{\T}\|.
    \end{EQA}
    Therefore
    \begin{EQA}
      \|\adjacencyEigenvalues - \orthMatrix^{\T} \probEigenvalues \widetilde{\orthMatrix}\|
      &\le&
      \|\bX - \bPi\|
      +
      \|(\empiricalLeftEigenvectors - \probLeftEigenvectors \orthMatrix) \adjacencyEigenvalues \empiricalRightEigenvectors^{\T}\|
      +
      \|\probLeftEigenvectors \probEigenvalues \widetilde{\orthMatrix} (\empiricalRightEigenvectors - \probRightEigenvectors \widetilde{\orthMatrix})^{\T}\|
      \\
      &\le&
      \|\bX - \bPi\| + \|\bX\|  \|\empiricalLeftEigenvectors - \probLeftEigenvectors \orthMatrix\|+ \|\bPi\|\|\empiricalRightEigenvectors - \probRightEigenvectors \widetilde{\orthMatrix}\|
      \\
      &\le&
      \left(\frac{2\sqrt{2K}(\|\bX\| + \|\bPi\|)^2}{\lambda_{\ntopics}^2( \bPi)} +1\right)\|\bX - \bPi\|,
    \end{EQA}
    where the last inequality is due to Corollary~\ref{lemma:eigenvectorConsistency}.
    Next,
    \begin{EQA}
      \|\adjacencyEigenvalues^{-1} - \widetilde{\orthMatrix}^{\T} \probEigenvalues^{-1} \orthMatrix\| 
      &=&
      \|\adjacencyEigenvalues^{-1}(\orthMatrix^{\T} \probEigenvalues \widetilde{\orthMatrix} - \adjacencyEigenvalues) \widetilde{\orthMatrix}^{\T} \probEigenvalues^{-1} \orthMatrix\|
      \\
      &\le&
      \|\adjacencyEigenvalues^{-1}\| \ \|\adjacencyEigenvalues - \orthMatrix^{\T} \probEigenvalues \widetilde{\orthMatrix}\| 
      \ \|\probEigenvalues^{-1}\|
      \\
      &\le&
  \left(\frac{2\sqrt{2K}(\|\bX\| + \|\bPi\|)^2}{\lambda_{\ntopics}^2( \bPi)} +1\right)\frac{\|\bX - \bPi\|}{\lambda_{\ntopics}(\bX)  \lambda_{\ntopics}(\bPi)},
    \end{EQA}
    where \(\lambda_{\ntopics}(\bX)\) is the \(\ntopics\)-th largest singular value of matrix \(\bX\). 
    Due to Weyl's inequality and the fact that \(\|\bX - \bPi\| \le \frac{1}{2} \lambda_{\ntopics}(\bPi)\) we have $\lambda_{\ntopics}(\bX)\ge \frac{1}{2}\lambda_{\ntopics}(\bPi)$ and $\|\bX\| \le \|\bPi\| + \frac{1}{2} \lambda_{\ntopics}(\bPi) \le \frac{3}{2}\|\bPi\|$. Plugging these inequalities in the last two displays we obtain the lemma. 
  \end{proof}

\subsection{Noisy Separable Matrix Factorization}
\label{sec:consistencySPA}
  In this section, we give a bound on the error of preconditioned SPA in Noisy Separable Matrix Factorization model. Assume that we observe 
  \begin{EQA}[c]
    \noisyTargetMatrix = \targetMatrix + \noiseMatrix = {\bW} {\bf Q} + \noiseMatrix,
  \end{EQA}
  where \(\noiseMatrix \in \RR^{\nsize \t \ntopics}\) is a perturbation (noise) matrix, and
  \begin{EQA}[c]
    \targetMatrix = {\bW} {\bf Q} , 
  \end{EQA}
  where \({\bW} \in \RR_{+}^{\nsize \t \ntopics}\) and ${\bf Q}  \in \RR^{\ntopics \t \ntopics}$.  If we assume that ${\bW}$ satisfies Assumption~\ref{ass:anchorDoc} then we obtain the setting usually referred to as Noisy Separable Matrix Factorization (NSMF).

  The following theorem holds for preconditioned SPA in the NSMF model, see~\cite{Gillis2015,Mizutani2016}. 

 { \begin{theorem}
  \label{theorem:spaBasic}
    Let \(\ntopics \geq 2\) and let  Assumption~\ref{ass:anchorDoc} hold. Assume that matrix ${\bf Q}$ is non-degenerate and the entries $W_{im}$ of matrix $\bW$ satisfy the condition $\sum_{m=1}^K W_{im}\le 1$ for $i=1,\dots,n$. Moreover, assume that for any $i=1,\dots,n$, the norms of the rows 
    of matrix \(\noiseMatrix\) satisfy $\|{\bf e}_{i}^{\rm T}\noiseMatrix\|_{2}\leq \epsilon$ with  
    \begin{EQA}[c]
    \epsilon   \leq C_* \frac{\lambda_{\min}({\bf Q})}{K \sqrt{K}}
    \end{EQA}
    for some constant $C_* > 0$ small enough. Let $J$ be the set of indices returned by the preconditioned SPA with input \((\noisyTargetMatrix, \ntopics)\). Then, there exist a constant \(C_0 > 0\) and a permutation \(\pi\) such that, for all $j \in J$,
    \begin{EQA}[c]
      \|\tilde{\gv}_{j} - {\bf q}_{\pi(j)}\|_2 \le C_0 \condNumber({\bf Q}) \epsilon,
    \end{EQA}
  where \(\tilde{\gv}_{k}\) and \({\bf q}_{k}\) are the k-th rows of matrices \(\noisyTargetMatrix\) and ${\bf Q}$, respectively.
  \end{theorem}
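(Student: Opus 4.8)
\noindent The plan is to follow the robustness analysis of preconditioned SPA from \cite{Gillis2015,Mizutani2016}, adapted to the present normalization in which the rows of $\bW$ are sub-probability vectors. First I would put the problem in canonical form. By Assumption~\ref{ass:anchorDoc}, for each $k$ there is an anchor row of $\bW$ equal to $\be_k^\T$; after a permutation we may assume $\bW=\bigl[\,\bI_K;\,\bW'\,\bigr]$ with the rows of $\bW'$ nonnegative and summing to at most $1$. Hence the first $K$ rows of $\targetMatrix=\bW\mathbf{Q}$ are exactly the rows $\mathbf{q}_1,\dots,\mathbf{q}_K$ of $\mathbf{Q}$, and every row of $\targetMatrix$ lies in the polytope $\mathcal{C}=\conv\{\mathbf{0},\mathbf{q}_1,\dots,\mathbf{q}_K\}$. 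Since $\mathbf{Q}$ is non-degenerate, the nonzero vertices of $\mathcal{C}$ are precisely $\mathbf{q}_1,\dots,\mathbf{q}_K$, so the task reduces to recovering these vertices; in the noiseless case the Euclidean norm over $\mathcal{C}$ is maximized at a vertex, which is exactly what one step of SPA selects.

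\noindent The crux is that preconditioning orthogonalizes the vertex system. In the noiseless case the constraints $\ba_i^\T\bL\ba_i\le 1$ are governed by the vertices (the remaining rows, being sub-convex combinations of the $\mathbf{q}_k$, satisfy them automatically because $\|\bW'_i\|_1\le 1$ forces $\|\bW'_i\|_2\le 1$), and the minimizer of the defining program is the origin-centered minimum-volume (John) ellipsoid of $\mathbf{q}_1,\dots,\mathbf{q}_K$, namely $\bL^\ast=(\mathbf{Q}^\T\mathbf{Q})^{-1}$, with every vertex constraint active since $\mathbf{Q}(\mathbf{Q}^\T\mathbf{Q})^{-1}\mathbf{Q}^\T=\bI_K$. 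Writing $\bB=(\bL^\ast)^{1/2}=(\mathbf{Q}^\T\mathbf{Q})^{-1/2}$, the preconditioned vertices $\bB\mathbf{q}_k$ have Gram matrix $\mathbf{Q}(\mathbf{Q}^\T\mathbf{Q})^{-1}\mathbf{Q}^\T=\bI_K$ and therefore form an orthonormal system. Thus after preconditioning the vertex matrix has condition number $1$, and this is the mechanism that upgrades the classical $\kappa^2$ dependence of plain SPA to the linear factor $\kappa(\mathbf{Q})$ claimed here.

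\noindent It then remains to propagate this through the noise, and this is where the main difficulty lies. The preconditioner $\bL^\ast$ is computed from the noisy rows $\tilde{\gv}_i$ rather than from $\mathbf{Q}$, so it is a perturbation of $(\mathbf{Q}^\T\mathbf{Q})^{-1}$; the key technical step is to show that the minimum-volume ellipsoid problem is stable under a row perturbation of size $\epsilon$, so that $\bB=(\bL^\ast)^{1/2}$ still maps the true vertices to a near-orthonormal system of condition number $O(1)$ while mapping each noise row to a row of norm at most $C\epsilon/\lambda_{\min}(\mathbf{Q})$. The hypothesis $\epsilon\le C_*\lambda_{\min}(\mathbf{Q})/(K\sqrt{K})$ guarantees that this preconditioned noise level stays below $C_*/K^{3/2}$, which is exactly the threshold under which the robustness analysis of plain SPA \cite{Gillis2014} applies to a simplex with $O(1)$-conditioned vertices in $\RR^K$ (the factor $K^{3/2}$ absorbing the ambient dimension and the worst-case shrinkage of the norm gaps between vertices along the successive projection steps).

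\noindent Finally, applying the plain SPA guarantee to the preconditioned columns $\bB\tilde{\gv}_i$ yields an index set $J$ and a permutation $\pi$ such that $\|\bB\tilde{\gv}_j-\bB\mathbf{q}_{\pi(j)}\|_2\le C\,\epsilon/\lambda_{\min}(\mathbf{Q})$ for every $j\in J$. Writing $\tilde{\gv}_j-\mathbf{q}_{\pi(j)}=\bB^{-1}\bigl(\bB\tilde{\gv}_j-\bB\mathbf{q}_{\pi(j)}\bigr)$ and invoking the stability bound $\|\bB^{-1}\|\le C\lambda_{\max}(\mathbf{Q})$ then gives $\|\tilde{\gv}_j-\mathbf{q}_{\pi(j)}\|_2\le C_0\,\kappa(\mathbf{Q})\,\epsilon$, as claimed. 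I expect the canonical-form reduction, the closed-form John ellipsoid, and the final norm bookkeeping to be routine, with essentially all the difficulty concentrated in the perturbation analysis of the minimum-volume ellipsoid and in tracking the constants so that the noiseless orthogonality degrades only by $O(1)$ factors under the stated bound on $\epsilon$.
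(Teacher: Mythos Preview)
The paper does not prove this theorem: it is stated as a known result, introduced only with the sentence ``The following theorem holds for preconditioned SPA in the NSMF model, see~\cite{Gillis2015,Mizutani2016}.'' There is therefore no in-paper argument to compare against; the paper simply imports the bound as a black box and immediately applies it (Corollary~\ref{corollary:consistencyBasis}) to the specific instance ${\bf Q}={\bf H}{\bf O}$, ${\bf N}=\empiricalLeftEigenvectors-{\bf U}{\bf O}$.

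Your proposal is a faithful high-level sketch of the mechanism behind the cited results: the John-ellipsoid preconditioner $(\mathbf{Q}^\T\mathbf{Q})^{-1/2}$ orthonormalizes the vertex system in the noiseless case, the stability of the minimum-volume ellipsoid under row perturbations of size $\epsilon$ keeps the preconditioned vertices $O(1)$-conditioned and the preconditioned noise at level $O(\epsilon/\lambda_{\min}(\mathbf{Q}))$, and then the standard SPA robustness bound together with the back-transformation by $\bB^{-1}$ yields the factor $\kappa(\mathbf{Q})$ instead of $\kappa(\mathbf{Q})^2$. This is exactly the route of \cite{Gillis2015,Mizutani2016}, so your outline is on target; just be aware that for the purposes of the present paper no such reconstruction is needed, and that the genuinely delicate part you flag --- the quantitative perturbation of the minimum-volume ellipsoid and the tracking of constants so that the $K\sqrt{K}$ threshold suffices --- is precisely what those references supply and is nontrivial to redo from scratch.
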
}

  Note that this error bound depends on the upper bound on the individual errors \(\|{\bf e}_{i}^{\rm T}\noiseMatrix\|_2\). From the statistical point of view, one might expect that there should be an algorithm, which improves upon this error bound if there are many nearly ``pure'' rows in matrix \(\targetMatrix\), so that the value of the error is diminished by averaging. However, to the best of our knowledge, no such algorithm complemented with a performance analysis can be found in the literature.

  We now consider a specific instance of NSMF model given by~\eqref{eq:umodel}. In this case, \(\noisyTargetMatrix = \empiricalLeftEigenvectors\) and ${\bf Q}={\bf H}{\bf O}$  for an orthogonal matrix ${\bf O}$. Specifically, ${\bf O}$ is the orthogonal matrix, for which~\eqref{DKSingularVectors} holds (it is the same matrix ${\bf O}$, for which the bound of Lemma~\ref{lemma:rowFactorBound} is valid).  Combining Theorem~\ref{theorem:spaBasic} with Lemma~\ref{lemma:rowFactorBound} 
  we get the following corollary. 
  \begin{corollary}
  \label{corollary:consistencyBasis}
    Let Assumptions~\ref{ass:anchorDoc}  and~\ref{cond:community memberships} be satisfied with constant $\bar C\le C_*$. Consider the matrices $\bPi$, $\bX$, ${\bf H}$, $\empiricalLeftEigenvectors$ as in~\eqref{svc:Pi} -- \eqref{svc:X} such that $\lambda_{\ntopics}(\bPi)>0$ and {\(\|\bX - \bPi\| \le \frac{1}{2} \lambda_{\ntopics}(\bPi)\)}. Let ${\bf O}$ be the orthogonal matrix, for which \eqref{DKSingularVectors} holds.  
     Let $J$ be the set of indices returned by the preconditioned SPA with input \((\empiricalLeftEigenvectors, \ntopics)\), and let \(\basisMatrixEstimate = \empiricalLeftEigenvectors_J\). 
     Then, there exist a constant \(C_0>0\) and a permutation \(\pi\) such that, for all \(j = 1, \dots, \ntopics\),
    \begin{equation}
    \label{eq:consistencyBasis}
      \bigl\|\hat{\hv}_{j} - {\bf q}_{\pi(j)}\bigr\|_2 \le C_0 \condNumber(\basisMatrix) \errorAdjacency,
    \end{equation}
      where \(\hat{\hv}_{k}\) and \({\bf q}_{k}\) are the k-th rows of matrices $\hat{\bf H}$ and ${\bf H}{\bf O}$, respectively.
    Furthermore,
    \begin{equation}
      \label{eq:factorBound}
      \bigl\|\basisMatrixEstimate - \tilde\bP \basisMatrix \mathbf{O}\bigr\|_{F} \le C_0 \ntopics^{1/2} \condNumber(\bW) \errorAdjacency,
    \end{equation}
    where \(\tilde\bP\) is a permutation matrix corresponding to the permutation \(\pi\). 
  \end{corollary}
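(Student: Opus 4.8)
The plan is to recognize that model~\eqref{eq:umodel}, written as $\empiricalLeftEigenvectors = \probLeftEigenvectors\orthMatrix + \noiseMatrix = \bW(\basisMatrix\orthMatrix)+\noiseMatrix$, is exactly an instance of the NSMF model of Theorem~\ref{theorem:spaBasic}, with weights matrix $\bW$, mixing matrix ${\bf Q}=\basisMatrix\orthMatrix$, and noise $\noiseMatrix = \empiricalLeftEigenvectors-\probLeftEigenvectors\orthMatrix$. Under this identification, the per-row bound~\eqref{eq:consistencyBasis} is precisely the conclusion of Theorem~\ref{theorem:spaBasic}, while~\eqref{eq:factorBound} will follow by aggregating the per-row bounds over the $\ntopics$ selected rows. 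The whole argument thus reduces to checking the hypotheses of Theorem~\ref{theorem:spaBasic} for this particular pair $({\bf Q},\noiseMatrix)$ and to rewriting the resulting constants in terms of $\condNumber(\basisMatrix)$ and $\condNumber(\bW)$.

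First I would bound the noise rows. Choosing $\orthMatrix$ to be the orthogonal matrix for which Lemma~\ref{lemma:rowFactorBound} holds (the same matrix appearing in~\eqref{DKSingularVectors}), the assumption $\|\bX-\bPi\|\le\frac12\lambda_{\ntopics}(\bPi)$ lets me invoke that lemma to obtain $\|\ev_i^\T\noiseMatrix\|_2\le C\beta_i(\bX,\bPi)\le C\,\errorAdjacency$ for every $i$. Hence the uniform noise level in Theorem~\ref{theorem:spaBasic} can be taken to be $\epsilon:=C\,\errorAdjacency$.

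The crucial step is to verify the small-noise condition $\epsilon\le C_*\lambda_{\min}({\bf Q})/(\ntopics\sqrt{\ntopics})$. Here I would exploit the representation $\probLeftEigenvectors=\bW\basisMatrix$ from~\eqref{UWH}: since $\probLeftEigenvectors$ has orthonormal columns, $\basisMatrix^\T(\bW^\T\bW)\basisMatrix=\bI_{\ntopics}$, whence $\bW^\T\bW=(\basisMatrix\basisMatrix^\T)^{-1}$. Reading off eigenvalues gives the clean identities $\lambda_{\min}(\basisMatrix)=1/\lambda_1(\bW)$ and $\condNumber(\basisMatrix)=\condNumber(\bW)$; moreover $\lambda_{\min}({\bf Q})=\lambda_{\min}(\basisMatrix\orthMatrix)=\lambda_{\min}(\basisMatrix)$ because $\orthMatrix$ is orthogonal. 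Substituting $\lambda_{\min}({\bf Q})=1/\lambda_1(\bW)$, the required inequality becomes $\errorAdjacency\le (C_*/C)\bigl(\lambda_1(\bW)\ntopics\sqrt{\ntopics}\bigr)^{-1}$, which is guaranteed by Assumption~\ref{cond:community memberships} (with $\bar C\le C_*$ small enough to absorb the constant of Lemma~\ref{lemma:rowFactorBound}, and using $\condNumber(\bW)\ge1$). The remaining hypotheses of Theorem~\ref{theorem:spaBasic} are immediate: $\ntopics\ge2$ and Assumption~\ref{ass:anchorDoc} are assumed, ${\bf Q}=\basisMatrix\orthMatrix$ is non-degenerate since $\basisMatrix$ is full rank and $\orthMatrix$ orthogonal, and $\sum_m W_{im}\le1$ holds by~\eqref{factoriz1}.

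Applying Theorem~\ref{theorem:spaBasic} then produces a permutation $\pi$ with $\|\tilde{\gv}_j-{\bf q}_{\pi(j)}\|_2\le C_0\,\condNumber({\bf Q})\,\epsilon$ for all $j\in J$; identifying the rows of $\basisMatrixEstimate=\empiricalLeftEigenvectors_J$ with the selected $\tilde{\gv}_j$, using $\condNumber({\bf Q})=\condNumber(\basisMatrix)$ and $\epsilon=C\errorAdjacency$, and renaming the constant yields~\eqref{eq:consistencyBasis}. For~\eqref{eq:factorBound} I would let $\tilde\bP$ be the permutation matrix of $\pi$ and sum the squared row bounds,
\[
\bigl\|\basisMatrixEstimate-\tilde\bP\basisMatrix\orthMatrix\bigr\|_F^2=\sum_{j=1}^{\ntopics}\bigl\|\hat{\hv}_j-{\bf q}_{\pi(j)}\bigr\|_2^2\le \ntopics\bigl(C_0\condNumber(\basisMatrix)\errorAdjacency\bigr)^2,
\]
and take square roots, finally replacing $\condNumber(\basisMatrix)$ by $\condNumber(\bW)$ via the identity above. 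The only genuine obstacle is the bookkeeping of these singular-value identities between $\basisMatrix$ and $\bW$, together with ensuring that a single orthogonal $\orthMatrix$ is used consistently across Lemma~\ref{lemma:rowFactorBound}, the NSMF factorization, and~\eqref{DKSingularVectors}; once this is arranged, the rest is routine.
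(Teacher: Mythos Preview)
Your proposal is correct and follows essentially the same route as the paper: identify $\empiricalLeftEigenvectors=\bW(\basisMatrix\orthMatrix)+\noiseMatrix$ as an NSMF instance, bound the row-wise noise via Lemma~\ref{lemma:rowFactorBound}, verify the small-noise hypothesis of Theorem~\ref{theorem:spaBasic} using Assumption~\ref{cond:community memberships} together with $\lambda_{\min}(\basisMatrix)=1/\lambda_1(\bW)$, and then aggregate the per-row conclusion to get the Frobenius bound. The only cosmetic difference is that you derive the identities $\lambda_{\min}(\basisMatrix)=1/\lambda_1(\bW)$ and $\condNumber(\basisMatrix)=\condNumber(\bW)$ directly from $\basisMatrix^\T(\bW^\T\bW)\basisMatrix=\bI_{\ntopics}$, whereas the paper quotes them from Lemma~\ref{lemma:matrix_eigen0} (equations~\eqref{eq:matrix_property6} and~\eqref{eq:matrix_property4}).
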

  \begin{proof}
    Taking into account equations~\eqref{svc:Pi} -- \eqref{eq:umodel} we apply Theorem~\ref{theorem:spaBasic} with ${\bf Q}={\bf H}{\bf O}$, ${\bf N}=\empiricalLeftEigenvectors- {\bf U}{\bf O}$. By Lemma~\ref{lemma:rowFactorBound},
    $$
    \|{\bf e}_{i}^{\rm T}\noiseMatrix\|_2 = \|{\bf e}_{i}^{\rm T}(\empiricalLeftEigenvectors- {\bf U}{\bf O})\|_2 \le \epsilon,\quad i=1,\dots,n,
    $$
    where $\epsilon =\errorAdjacency$. Therefore, using Assumption~\ref{assumption_eigenvalues}, \eqref{eq:matrix_property6}, and the fact that $\bar C\le C^*$ we have
    $$
    \epsilon\le  \frac{\bar C}{\lambda_{1}(\bW) K \sqrt{K}}
    = \frac{\bar C \lambda_{\min}({\bf H})}{ K \sqrt{K}}
    \le  \frac{C_*\lambda_{\min}({\bf H}{\bf O})}{K \sqrt{K}}.
    $$
    Thus, the assumptions of Theorem~\ref{theorem:spaBasic} are satisfied and we deduce from Theorem~\ref{theorem:spaBasic} that  
    $$
    \bigl\|\hat{\hv}_{j} - {\bf q}_{\pi(j)}\bigr\|_2 \le C_0 \condNumber({\bf HO})\epsilon = C_0\condNumber({\bf H}) \errorAdjacency,
    $$
    where \(\hat{\hv}_{k}\) and \({\bf q}_{k}\) are the $k$-th rows of matrices $\hat{\bf H}$ and ${\bf H}{\bf O}$, respectively. Thus, \eqref{eq:consistencyBasis} follows. Inequality~\eqref{eq:factorBound} is an immediate consequence of~\eqref{eq:consistencyBasis} and of the equality $\kappa({\bf H})=\kappa(\bW)$ (cf. \eqref{eq:matrix_property4}).
  \end{proof}


\subsection{Concentration Bounds for Multinomial Matrices}
   In this section, we provide a bound with high probability on the spectral norm of matrix $\bX-\bPi$. Recall that, by definition, \(\bX^\T=[X_1, \dots, X_{n}]\) is such that \(N X_i \in \mathbb{R}^{p}\) are independent random vectors distributed according to \(p\)-dimensional multinomial distribution with parameters \((N, \Pi_i)\). We will use matrix Bernstein inequality in the following form (cf. Theorem 6.1.1 in~\cite{tropp2015introduction}).
  \begin{proposition}[Matrix Bernstein inequality]
    \label{prop:MatrixBernstein}
      Let \(\bZ_1,\dots ,\bZ_N\) be independent zero-mean \(n\times p\) random matrices such  that  \(\|\bZ_m\| \le L\) for $m=1,\dots,N$. Then, for all \(t>0\) we have
      \begin{EQA}[c]
        \P \left(\left\Vert \dfrac{1}{N}\sum_{m=1}^{N} \bZ_m\right\Vert\ge t\right)\leq (n+p)\exp \left(-\dfrac{t^2N^2}{2(\sigma^2 + LtN/3)}\right),
      \end{EQA} 
      where 
      \begin{equation*}
      \sigma^2 = \max\left \{\left \Vert \sum_{m = 1}^{N} \E\left (\bZ_m\bZ^{\T}_m\right )\right \Vert, \left \Vert \sum_{m=1}^{N} \E\left (\bZ_m^{^\T}\bZ_m\right )\right \Vert\right \}.
    \end{equation*}
    \end{proposition}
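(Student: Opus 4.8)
The plan is to prove this Bernstein-type bound by the \emph{matrix Laplace transform method}, first reducing the rectangular setting to a Hermitian one via the self-adjoint dilation. For a rectangular matrix $\bB \in \RR^{n\times p}$, set
\[
\mathcal{H}(\bB) = \begin{pmatrix} \boldsymbol{0} & \bB \\ \bB^\T & \boldsymbol{0} \end{pmatrix} \in \RR^{(n+p)\times(n+p)},
\]
which is symmetric, linear in $\bB$, and satisfies $\lambda_{\max}(\mathcal H(\bB)) = \|\bB\|$. Writing $\bY_m := \mathcal H(\bZ_m)$, each $\bY_m$ is zero-mean with $\|\bY_m\| \le L$, and $\|\tfrac1N\sum_m \bZ_m\| = \tfrac1N \lambda_{\max}(\sum_m \bY_m)$. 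Moreover $\bY_m^2 = \diag(\bZ_m\bZ_m^\T,\, \bZ_m^\T\bZ_m)$, so that $\|\sum_m \E \bY_m^2\| = \max\{\|\sum_m \E\bZ_m\bZ_m^\T\|,\, \|\sum_m \E\bZ_m^\T\bZ_m\|\} = \sigma^2$; this is exactly where the stated form of $\sigma^2$, as the maximum of the two norms, emerges. It therefore suffices to establish a Bernstein bound for $\lambda_{\max}$ of the Hermitian sum $\bY := \sum_m \bY_m$ on $\RR^{(n+p)\times(n+p)}$.

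Second, I would invoke the Chernoff bound for the largest eigenvalue: for every $\theta>0$,
\[
\P\bigl(\lambda_{\max}(\bY) \ge s\bigr) \le e^{-\theta s}\, \E\,\trace\, e^{\theta \bY}.
\]
The crux is to control $\E\,\trace\exp(\theta\sum_m \bY_m)$ despite non-commutativity. Using Lieb's concavity theorem together with Jensen's inequality and the independence of the $\bY_m$, one obtains the subadditivity of matrix cumulant generating functions,
\[
\E\,\trace\exp\Bigl(\theta\sum_m \bY_m\Bigr) \le \trace\exp\Bigl(\sum_m \log \E\, e^{\theta \bY_m}\Bigr).
\]

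Third, I would bound each factor $\log\E\,e^{\theta\bY_m}$. Since the scalar map $x\mapsto (e^{\theta x}-\theta x - 1)/x^2$ is increasing and $\bY_m$ has eigenvalues in $[-L,L]$ with $\E\bY_m=0$, the transfer rule yields $\E\,e^{\theta\bY_m} \preceq \bI + g(\theta)\,\E\bY_m^2$ with $g(\theta)=(e^{\theta L}-\theta L - 1)/L^2$, and then $\log\E\,e^{\theta\bY_m}\preceq g(\theta)\E\bY_m^2$ by operator monotonicity of $\log(\bI+\cdot)$. Summing, using monotonicity of the trace exponential and $\trace\, e^{\bM}\le (n+p)\,e^{\lambda_{\max}(\bM)}$, gives $\E\,\trace\,e^{\theta\bY}\le (n+p)\exp(g(\theta)\sigma^2)$. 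Plugging this into the Chernoff bound with $s=tN$ and carrying out the standard Bernstein optimization of $\theta\mapsto g(\theta)\sigma^2 - \theta tN$ over $\theta>0$ produces the exponent $-\tfrac{t^2N^2}{2(\sigma^2 + LtN/3)}$, which is the claim.

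The main obstacle is the non-commutativity handled in the second step: one cannot factor $\E\,\trace\exp(\theta\sum_m\bY_m)$ into a product over $m$ as in the scalar case, and Golden--Thompson only accommodates two matrices. Lieb's concavity theorem is the deep analytic input that makes the subadditivity of cumulants go through; everything else is a faithful matrix analogue of the classical scalar Bernstein argument. Since this statement is precisely Theorem~6.1.1 of~\cite{tropp2015introduction}, in the paper we simply cite it and subsequently apply it with $s=tN$.
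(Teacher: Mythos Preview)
Your proposal is correct: the paper does not prove this proposition at all but simply cites Theorem~6.1.1 of~\cite{tropp2015introduction}, exactly as you note in your final sentence. The argument you sketch (Hermitian dilation, matrix Laplace transform, Lieb's concavity for subadditivity of cumulants, then the scalar Bernstein optimization) is precisely Tropp's proof, so there is nothing to compare.
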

  Applying Proposition~\ref{prop:MatrixBernstein} to our setting we obtain the following result. 
  \begin{proposition}
    \label{prop:MatrixBernstein-multinomial}
    Let \(\bX^\T=[X_1, \dots, X_{n}]\) be such that \(N X_i \in \mathbb{R}^{p}\) are independent random vectors distributed according to \(p\)-dimensional multinomial distribution with parameters \((N, \Pi_i)\). Then, for all \(t>0\) we have
    \begin{equation}
    \label{eq:prop:MatrixBernstein-multinomial}
      \P \left(\left\Vert \bX-\bPi\right\Vert\ge t\right)\leq (n+p)\exp \left(-\dfrac{t^2N}{2\sqrt{2}(n + t\sqrt{n}/3)}\right).
    \end{equation} 
  \end{proposition}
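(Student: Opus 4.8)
The plan is to write $\bX-\bPi$ as a normalized sum of $N$ independent zero-mean matrices and then invoke Proposition~\ref{prop:MatrixBernstein}. First I would decompose each multinomial count vector into single draws: since $N X_i\sim\text{Multinomial}_p(N,\Pi_i)$, I can write $N X_i=\sum_{m=1}^N Y_i^{(m)}$, where the $Y_i^{(m)}$ are i.i.d.\ $\text{Multinomial}_p(1,\Pi_i)$ vectors, i.e.\ each $Y_i^{(m)}$ equals one canonical basis vector $\be_j$ of $\RR^p$ (the word drawn) with $\E Y_i^{(m)}=\Pi_i$. By the independence of the documents and of the draws within each document, the whole family $\{Y_i^{(m)}\}$ is independent. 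Letting $\bZ_m\in\RR^{n\times p}$ be the matrix whose $i$-th row is $(Y_i^{(m)}-\Pi_i)^\T$, I obtain $\bX-\bPi=\tfrac1N\sum_{m=1}^N\bZ_m$ with $\bZ_1,\dots,\bZ_N$ independent and zero-mean, which is exactly the form required by Proposition~\ref{prop:MatrixBernstein}.

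Next I would produce the two ingredients $L$ and $\sigma^2$. For the uniform bound, every row of $\bZ_m$ has the form $\be_j-\Pi_i$, so its squared Euclidean norm is $(1-\Pi_{ij})^2+\sum_{l\ne j}\Pi_{il}^2\le 2$; summing over the $n$ rows gives $\|\bZ_m\|\le\|\bZ_m\|_F\le\sqrt{2n}$ deterministically, so I may take $L=\sqrt{2n}$. For the variance proxy I would compute the two second-moment matrices. In the $n\times n$ matrix $\E(\bZ_m\bZ_m^\T)$, independence of documents kills the off-diagonal entries, while the $i$-th diagonal entry is $\E\|Y_i^{(m)}-\Pi_i\|_2^2=1-\|\Pi_i\|_2^2\le1$, whence $\bigl\|\sum_m\E(\bZ_m\bZ_m^\T)\bigr\|\le N$. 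In the $p\times p$ matrix $\E(\bZ_m^\T\bZ_m)$, each summand is the covariance of the single draw $Y_i^{(m)}$, namely $\diag(\Pi_i)-\Pi_i\Pi_i^\T$, so $\E(\bZ_m^\T\bZ_m)=\bD-\bPi^\T\bPi$ with $\bD=\diag\!\bigl(\sum_i\Pi_{ij}\bigr)_{j=1}^p$.

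The crucial observation is that $\bD-\bPi^\T\bPi$, being a sum of covariance matrices, is positive semidefinite; since $\bPi^\T\bPi\succeq0$ as well, it is dominated by $\bD$, and therefore $\bigl\|\sum_m\E(\bZ_m^\T\bZ_m)\bigr\|\le N\|\bD\|=N\max_j\sum_i\Pi_{ij}\le Nn$, the last inequality using $\Pi_{ij}\le1$. Hence $\sigma^2\le Nn\le\sqrt2\,Nn$. Finally I would substitute $L=\sqrt2\,\sqrt n$ and $\sigma^2\le\sqrt2\,Nn$ into Proposition~\ref{prop:MatrixBernstein} applied with threshold $t$ to $\tfrac1N\sum_m\bZ_m=\bX-\bPi$: the denominator of the exponent becomes $2\bigl(\sqrt2\,Nn+\sqrt2\,\sqrt n\,tN/3\bigr)=2\sqrt2\,N\bigl(n+\sqrt n\,t/3\bigr)$, which yields exactly the exponent $-\dfrac{t^2N}{2\sqrt2\,(n+t\sqrt n/3)}$ of~\eqref{eq:prop:MatrixBernstein-multinomial}.

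I expect the control of $\bigl\|\sum_m\E(\bZ_m^\T\bZ_m)\bigr\|$ to be the only delicate point: the naive triangle-inequality bound $\|\bD\|+\|\bPi\|^2\le 2n$ is too lossy and would spoil the numerical constant, so the essential step is to exploit the positive-semidefiniteness of $\bD-\bPi^\T\bPi$ to discard the $-\bPi^\T\bPi$ term and reduce the variance proxy to the diagonal matrix $\bD$, whose spectral norm is simply the largest column sum of $\bPi$. Everything else—the single-draw decomposition, the deterministic bound on $L$, and the plug-in into matrix Bernstein—is routine.
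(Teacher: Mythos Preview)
Your proof is correct and follows the same overall approach as the paper: decompose each multinomial count into $N$ single draws, form the matrices $\bZ_m$, and apply Proposition~\ref{prop:MatrixBernstein} with $L=\sqrt{2n}$. The computations for $L$ and for the $n\times n$ variance matrix are identical to the paper's (modulo working with $\bX-\bPi$ rather than its transpose).

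The one genuine difference is your treatment of the $p\times p$ variance term $\sum_i\bigl(\diag(\Pi_i)-\Pi_i\Pi_i^\T\bigr)$. The paper bounds each summand individually via the Frobenius norm, $\|\diag(\Pi_i)-\Pi_i\Pi_i^\T\|\le\|\diag(\Pi_i)-\Pi_i\Pi_i^\T\|_F\le\sqrt{2}$, and then applies the triangle inequality to get $\sqrt{2}\,n$. Your argument via the PSD sandwich $0\preceq\bD-\bPi^\T\bPi\preceq\bD$ is cleaner and in fact sharper: it yields $\|\bD\|=\max_j\sum_i\Pi_{ij}\le n$ rather than $\sqrt{2}\,n$, and the intermediate quantity $\max_j\sum_i\Pi_{ij}$ could be much smaller than $n$ when no single word dominates across documents. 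You then give away this $\sqrt{2}$ factor to match the stated constant, so the final inequality is the same.
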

  \begin{proof}
    We prove~\eqref{eq:prop:MatrixBernstein-multinomial} for $\bX^\T-\bPi^\T$ rather than $\bX-\bPi$, which is equivalent.
    Matrix $\bZ^\T=\bX^\T-\bPi^\T$ has the form $\bZ^\T = [Z_1, \dots, Z_{n}]$ with independent column vectors  $Z_i = \frac{1}{N} \sum_{m = 1}^{N} (T_{im}-\E(T_{im}))$, where vectors \(T_{im}\) are distributed according to  \(p\)-dimensional multinomial distribution with parameters \((1, \Pi_i)\) and independent over $m$ for any fixed $i$. Here, we have used the fact that Multinomial$_p$\((N, \Pi_i)\) is a sum of $N$ independent Multinomial$_p$\((1, \Pi_i)\) vectors. 
    We also have $\bPi^\T=[\Pi_1, \dots, \Pi_{n}]$.
    Thus, we can write
    \begin{equation}\label{lemma_spectral1}
      \bZ^\T=\frac{1}{N}\sum_{m=1}^{N} \left(\bT_{m} - \E(\bT_{m})\right) = \frac{1}{N} \sum_{m = 1}^{N}\bZ_{m},
    \end{equation}
    where \(\bT_{m}=[T_{1m},\dots, T_{nm}]\) and \(\bZ_{m}=\bT_{m}-\E(\bT_{m})\) are independent zero-mean random matrices. 
    
    We apply Proposition~\ref{prop:MatrixBernstein} to the sum~\eqref{lemma_spectral1}. The first step is to evaluate \(\left \Vert \sum_{m=1}^{N} \E\left (\bZ_m\bZ^{\T}_m\right )\right \Vert\). Let $T_{im}(k)$ denote the $k$-th component of $T_{im}$, $k=1,\dots,p$. We have $\E(T_{im}(k)) = \Pi_{ik}$, ${\rm Var}(T_{im}(k)) = \Pi_{ik}(1-\Pi_{ik})$,  ${\rm Cov}(T_{im}(k), T_{im}(j)) = -\Pi_{ik}\Pi_{ij}$ for $i\ne j$. Therefore,
    \begin{align*}
      \E\left (\bZ_m\bZ^{\T}_m\right )&=\E\left (\bT_{m}\bT^{\T}_{m}\right )-\E\left (\bT_{m}\right )\E\left (\bT^{\T}_{m}\right)
      \\
      &=\E\sum_{i=1}^{n} T_{im}T_{im}^\T - \bPi\bPi^\T=\sum_{i=1}^{n} \bY_{i},
    \end{align*}
    where 
    \begin{equation*}
      \bY_{i}
      = {\rm diag}(\Pi_{i1},\dots,\Pi_{ip}) - \Pi_i\Pi_i^\T.
    \end{equation*}
    The spectral norm of  $\bY_{i}$ satisfies
    \begin{align*}
      \left \Vert \bY_{i}\right \Vert^{2}\leq \left \Vert \bY_{i}\right \Vert^{2}_{F}=\sum_{k=1}^{p} \Pi_{ik}^{2}+\Big(\sum_{k=1}^{p} \Pi_{ik}^{2} \Big)^{2}-2\sum_{k=1}^{p} \Pi_{ik}^{3}\leq 2,
    \end{align*}
    where we have used the fact that \(\sum_{k=1}^{p} \Pi_{ik}^{2}\le \sum_{k=1}^{p} \Pi_{ik}=1\).
    Thus,  \(\left \Vert\E\left (\bZ_m\bZ^{\T}_m\right )\right \Vert \leq \sqrt{2}n\) and 
    \begin{equation}\label{lemma_spectral2}
      \left \Vert \sum_{m=1}^{N} \E\left (\bZ_m\bZ^{\T}_m\right )\right \Vert\leq \sqrt{2} N n.
    \end{equation}
    Next, we derive an upper bound on 
    \(\left \Vert \sum_{m=1}^{N} \E\left (\bZ^\T_m\bZ_m\right )\right \Vert\). 
    Note that $\E (\bT^{\T}_{m}\bT_{m} )$ is a matrix with diagonal entries $\E (T_{im}^{\T} T_{im} )= \sum_{k=1}^{p} \Pi_{ik}=1$ while its off-diagonal entries are $\E (T^{\T}_{im} T_{jm} )=[\E (T_{im})]^{\T}\E (T_{jm})= \Pi_{i}^\T \Pi_j$ due to independence between $T_{im}$ and $T_{jm}$ for $i\ne j$.  Also, $\E(\bT^{\T}_{m})\E(\bT_{m})= \bPi^\T \bPi$ is a matrix with entries $ \Pi_{i}^\T \Pi_j$. Hence,
    \begin{align}\label{lemma_spectral3}
      \E\left (\bZ^{\T}_m\bZ_m\right )=\E(\bT^{\T}_{m}\bT_{m})-\E(\bT^{\T}_{m})\E (\bT_{m})
      = {\rm diag}\Big(1-\Vert \Pi_1 \Vert_2^2,\dots,1-\Vert \Pi_n \Vert_2^2\Big).
    \end{align}
    It follows that 
    \(\left \Vert \E\left (\bZ^{\T}_m\bZ_m\right )\right \Vert \leq 1\), and thus \(\left \Vert \sum_{m=1}^{N} \E\left (\bZ^\T_m\bZ_m\right )\right \Vert\le N\).  Combining this inequality with~\eqref{lemma_spectral2} we obtain that $\sigma^2$ defined in Proposition~\ref{prop:MatrixBernstein} satisfies \(\sigma^2\leq\sqrt{2}N n\). 

    Finally, we specify the constant $L$ that gives an upper bound on \(\left \Vert \bZ_{m} \right \Vert\). Let \(u \in S^{p-1}\) be an element of the unit sphere in \(\mathbb R^{p}\). Since for any \(i\) vector $T_{im}$ has only one component equal to 1 and all other components 0  we have $\Vert T_{im}-\E(T_{im})\Vert_2^2 =\Vert T_{im}-\Pi_{i}\Vert_2^2 \le 2$ and thus
    \begin{EQA}[c]
      \left \vert u^{\T}(T_{im}-\E(T_{im}))\right \vert \leq \sqrt{2}.
    \end{EQA}
    It follows that 
    \begin{align*}
      \left \Vert \bT_{m}-\E (\bT_{m}) \right \Vert^{2} &= \underset{u\in S^{p-1}}{\sup}\left \Vert u^{\T}(\bT_{m}-\E(\bT_{m}))\right \Vert^{2}_{2}
      \\
      &=
      \underset{u \in S^{p-1}}{\sup} \sum_{i=1}^{n}\left \vert u^{\T}(T_{im}-\E(T_{im}))\right \vert^{2}\leq 2n
    \end{align*}
    and we get \(\left \Vert \bZ_{m} \right \Vert\leq \sqrt{2n} = : L\) for any \(m = 1, \dots, n\).
    The desired result now follows by applying 
     Proposition~\ref{prop:MatrixBernstein} with $\sigma^2\leq\sqrt{2}N n$ and $L=\sqrt{2n}$.  
  \end{proof}

  The next lemma is a corollary of Proposition~\ref{prop:MatrixBernstein-multinomial}:
  %
  \begin{lemma}
  \label{lemma:spectralNormNoise}
    Let the assumptions of Proposition~\ref{prop:MatrixBernstein-multinomial} be satisfied.
    Assume that  $N\ge \log(n+p)$ and $\min(n,p)\ge 2$. 
    Then
    \begin{equation}\label{eq1:lemma:spectralNormNoise}
      \P \left(\left\Vert \bX-\bPi\right\Vert\ge 4\sqrt{\frac{n\log(n+p)}{N}}\right)\leq (n+p)^{-1}.
    \end{equation}
    Furthermore,
    \begin{equation}\label{eq2:lemma:spectralNormNoise}
      \P \left(\max_{1\le i\le n} \|\ev_{i}^{\T}(\bX - \bPi) \|_{2} 
      \ge 5\sqrt{\frac{\log(n+p)}{N}}\right)\leq (n+p)^{-1}.
    \end{equation}
  \end{lemma}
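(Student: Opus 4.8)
The plan is to obtain both inequalities by directly substituting the stated choices of $t$ into the tail bound of Proposition~\ref{prop:MatrixBernstein-multinomial} and checking that the resulting exponent is sufficiently negative, using the hypothesis $N\ge\log(n+p)$ to control the term that is linear in $t$ in the denominator.

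For \eqref{eq1:lemma:spectralNormNoise}, I would set $t=4\sqrt{n\log(n+p)/N}$, so that $t^2N=16\,n\log(n+p)$. The only point to check is the size of the correction $t\sqrt n/3$ in the denominator of Proposition~\ref{prop:MatrixBernstein-multinomial}. Since $t\sqrt n=4n\sqrt{\log(n+p)/N}\le 4n$ by the assumption $N\ge\log(n+p)$, we get $n+t\sqrt n/3\le 7n/3$. Substituting gives
$$\frac{t^2N}{2\sqrt2\,(n+t\sqrt n/3)}\ \ge\ \frac{16\,n\log(n+p)}{2\sqrt2\cdot 7n/3}=\frac{24}{7\sqrt2}\log(n+p)\ \ge\ 2\log(n+p),$$
so the right-hand side of Proposition~\ref{prop:MatrixBernstein-multinomial} is at most $(n+p)\exp(-2\log(n+p))=(n+p)^{-1}$, which is \eqref{eq1:lemma:spectralNormNoise}.

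For \eqref{eq2:lemma:spectralNormNoise}, the key observation is that a single row $\ev_i^\T(\bX-\bPi)$ is exactly the ``signal + noise'' deviation for a corpus consisting of the single document $i$: since the spectral norm of a $1\times p$ matrix equals the Euclidean norm of its row, Proposition~\ref{prop:MatrixBernstein-multinomial} applied with $n$ replaced by $1$ yields, for each fixed $i$,
$$\P\big(\|\ev_i^\T(\bX-\bPi)\|_2\ge t\big)\le (1+p)\exp\!\Big(-\frac{t^2N}{2\sqrt2\,(1+t/3)}\Big).$$
I would then take $t=5\sqrt{\log(n+p)/N}$, so that $t^2N=25\log(n+p)$ and, because $N\ge\log(n+p)$ forces $t\le 5$, the denominator obeys $1+t/3\le 8/3$. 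Hence the exponent is at most $-\frac{75}{16\sqrt2}\log(n+p)\le -3\log(n+p)$, and the per-document probability is bounded by $(1+p)(n+p)^{-3}$. A union bound over $i=1,\dots,n$ together with the crude estimate $n(1+p)\le(n+p)^2$ (valid since $n\ge1$, $p\ge0$) gives $\P\big(\max_i\|\ev_i^\T(\bX-\bPi)\|_2\ge t\big)\le (n+p)^{2}(n+p)^{-3}=(n+p)^{-1}$, which is \eqref{eq2:lemma:spectralNormNoise}.

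There is no genuine obstacle beyond bookkeeping: the whole content is to verify that the chosen numerical constants ($4$ and $5$) make the exponents exceed $2\log(n+p)$ and $3\log(n+p)$ respectively, after accounting for the prefactors $(n+p)$ and $(1+p)$ and, in the second case, the union bound. The one step requiring a moment's care is the reinterpretation of a single row as an $n=1$ instance of Proposition~\ref{prop:MatrixBernstein-multinomial}; alternatively one could re-run the matrix Bernstein argument of that proposition for a $1\times p$ deviation directly, obtaining the same variance proxy $\sigma^2\le\sqrt2$ and the same uniform bound $L\le\sqrt2$ on the summands, and hence the same tail.
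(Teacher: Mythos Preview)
Your proof is correct and follows exactly the same approach as the paper: for \eqref{eq1:lemma:spectralNormNoise} you substitute $t=4\sqrt{n\log(n+p)/N}$ into Proposition~\ref{prop:MatrixBernstein-multinomial}, and for \eqref{eq2:lemma:spectralNormNoise} you apply the proposition row-by-row with $n=1$ and conclude via a union bound. Your write-up is in fact slightly more explicit than the paper's in verifying the final numerical inequalities.
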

  \begin{proof}
    Inequality~\eqref{eq1:lemma:spectralNormNoise} follows easily from Proposition~\ref{prop:MatrixBernstein-multinomial} by setting $t=4\sqrt{\frac{n\log(n+p)}{N}}$ and using the assumptions $N\ge \log(n+p)$.
    In order to prove~\eqref{eq2:lemma:spectralNormNoise}, we bound each probability $\P \left( \|\ev_{i}^{\T}(\bX - \bPi) \|_{2} 
    \ge 5\sqrt{\frac{\log(n+p)}{N}}\right)$
    via Proposition~\ref{prop:MatrixBernstein-multinomial} with $n=1$ (that is, we apply Proposition~\ref{prop:MatrixBernstein-multinomial} to $1 \times p$ matrices $\ev_{i}^{\T}\bX$, $\ev_{i}^{\T}\bPi$) and then use the union bound. This yields
    \begin{align*}
      \P \left(\max_{1\le i\le n} \|\ev_{i}^{\T}(\bX - \bPi) \|_{2} 
      \ge 5\sqrt{\frac{\log(n+p)}{N}}\right)\leq n(p+1)\exp \left(-\dfrac{75\log(n+p)}{16\sqrt{2}}\right).
    \end{align*}
    The right hand side of this inequality does not exceed $(n+p)^{-1}$.
  \end{proof}


\section{Proofs of the Main Results}
\label{sec:proofs}
\subsection{Proof of Lemma~\ref{lemma:rowFactorBound}}
   Using the fact that $\widehat{\probRightEigenvectors}_1^{\T}\widehat{\probRightEigenvectors}^{\T}=0$ we obtain
  \begin{EQA}
    && \|\ev_{i}^{\T}(\empiricalLeftEigenvectors - \probLeftEigenvectors \orthMatrix)\|_2
    =
    \|\ev_{i}^{\T}(\bX \empiricalRightEigenvectors \adjacencyEigenvalues^{-1} - \bPi \probRightEigenvectors  \probEigenvalues^{-1} \orthMatrix)\|_2
    \\
    &=&
    \|\ev_{i}^{\T} \bX \empiricalRightEigenvectors (\adjacencyEigenvalues^{-1} - \widetilde{\orthMatrix}^{\T} \probEigenvalues^{-1} \orthMatrix) + \ev_{i}^{\T} \bX (\empiricalRightEigenvectors - \probRightEigenvectors \widetilde{\orthMatrix}) \widetilde{\orthMatrix}^{\T} \probEigenvalues^{-1} \orthMatrix + \ev_{i}^{\T}(\bX - \bPi) \probRightEigenvectors \probEigenvalues^{-1} \orthMatrix\|_2
    \\
    &\le&
    \|\ev_{i}^{\T} \bX \empiricalRightEigenvectors (\adjacencyEigenvalues^{-1} - \widetilde{\orthMatrix}^{\T} \probEigenvalues^{-1} \orthMatrix)\|_2 + \|\ev_{i}^{\T} \bX (\empiricalRightEigenvectors - \probRightEigenvectors \widetilde{\orthMatrix}) \widetilde{\orthMatrix}^{\T} \probEigenvalues^{-1} \orthMatrix\|_2 + \|\ev_{i}^{\T}(\bX - \bPi) \probRightEigenvectors \probEigenvalues^{-1} \orthMatrix\|_2
    \\
    &=&
    G_1 + G_2 + G_3.
  \end{EQA}
  We now bound the values $ G_1, G_2$ and $G_3$ separately. We have
  \begin{EQA}
    G_1
    &=&
    \|\ev_{i}^{\T} \bX \empiricalRightEigenvectors (\adjacencyEigenvalues^{-1} - \widetilde{\orthMatrix}^{\T} \probEigenvalues^{-1} \orthMatrix)\|_2
    \le
    \|\ev_{i}^{\T} \bX\|_2 \, \|\empiricalRightEigenvectors\| \, \|\adjacencyEigenvalues^{-1} - \widetilde{\orthMatrix}^{\T} \probEigenvalues^{-1} \orthMatrix\|
   \\
   &\le&
    C \ntopics^{1/2} \condNumber^2(\bPi) ~ \frac{\|\ev_{i}^{\T} \bX\|_2\, \|\bX - \bPi\|}{\lambda_{\ntopics}^{2}(\bPi)},
  \end{EQA}
  where the last inequality is due to Lemma~\ref{corollary:eigenvalues}. 
  The values $G_2$ and $G_3$ can be controlled using the bounds for the norm of matrix product and Corollary~\ref{lemma:eigenvectorConsistency}:
  \begin{EQA}
     G_2
    &=&
    \|\ev_{i}^{\T} \bX (\empiricalRightEigenvectors - \probRightEigenvectors \widetilde{\orthMatrix}) \widetilde{\orthMatrix}^{\T} \probEigenvalues^{-1} \orthMatrix\|_2
    \le
    \|\ev_{i}^{\T} \bX\|_2 \, \|\empiricalRightEigenvectors - \probRightEigenvectors \widetilde{\orthMatrix}\| \, \|\probEigenvalues^{-1}\|
     \\
   & =&
    \frac{\|\ev_{i}^{\T} \bX\|_2 \, \|\empiricalRightEigenvectors - \probRightEigenvectors \widetilde{\orthMatrix}\|}{\lambda_{\ntopics}(\bPi)}
    \le
    5\sqrt{2}\kappa(\bPi) \frac{\|\ev_{i}^{\T} \bX\|_2 \, \|\bX - \bPi\|}{\lambda_{\ntopics}^2(\bPi)}
  \end{EQA}
  \begin{EQA}
    G_3
    &=&
    \|\ev_{i}^{\T}(\bX - \bPi) \probRightEigenvectors \probEigenvalues^{-1} \orthMatrix\|_2
    \le
    \|\ev_{i}^{\T}(\bX - \bPi) \|_2 \, \|\probRightEigenvectors\| \, \|\probEigenvalues^{-1}\|
    =
    \frac{\|\ev_{i}^{\T}(\bX - \bPi)\|_2}{\lambda_{\ntopics}(\bPi)}.
  \end{EQA}
  Combining these bounds proves the lemma.

\subsection{Proof of Lemma~\ref{lemma:documenTopicMatrixBound}}
  We first prove that matrix $\basisMatrixEstimate$ is non-degenerate.
  In this proof, we denote by ${\bf O}$ the orthogonal matrix, for which \eqref{DKSingularVectors} holds, and by $\tilde\bP$ the permutation matrix, for which the bound of Corollary~\ref{corollary:consistencyBasis} holds. Using Weyl's inequality~\cite[Theorem C.6]{Giraud-book} and Corollary~\ref{corollary:consistencyBasis} we obtain
  \begin{align*}
    \lambda_{\min}(\basisMatrixEstimate) &\geq \lambda_{\min}(\tilde\bP\basisMatrix \orthMatrix) - \|\basisMatrixEstimate - \tilde\bP\basisMatrix \orthMatrix \| \\
    & \geq \lambda_{\min}(\basisMatrix) - C_0 K^{1/2}\kappa(\bW)\beta(\bX, \bPi).
  \end{align*}
  Using this inequality, Assumption~\ref{cond:community memberships} with $\bar C\le C_0^{-1}$, and equations~\eqref{eq:matrix_property6}, \eqref{eq:matrix_property4} we find
  \begin{align}\label{eq:lh}
    \lambda_{\min}(\basisMatrixEstimate) &\geq \lambda_{\min}(\basisMatrix) - \frac{1}{2\lambda_{1}(\bW)}= \frac{1}{2\lambda_{1}(\bW)},
  \end{align}
  which proves that $\basisMatrixEstimate$ is invertible. Then, for the estimator
  $
    \documentTopicMatrixEstimate = \empiricalLeftEigenvectors \basisMatrixEstimate^{-1}
  $ and the permutation matrix $\bP=\tilde\bP^{-1}$ we have
  \begin{align*}
    \|\documentTopicMatrixEstimate - \documentTopicMatrix \bP\|_F &= \|\empiricalLeftEigenvectors\basisMatrixEstimate^{-1} - \probLeftEigenvectors\basisMatrix^{-1}\bP\|_F\\
    &\leq \|\empiricalLeftEigenvectors(\basisMatrixEstimate^{-1} - \orthMatrix^{\T} \basisMatrix^{-1}\bP)\|_F + \|(\empiricalLeftEigenvectors - \probLeftEigenvectors\orthMatrix)[\bP^{-1} \basisMatrix\orthMatrix]^{-1}\|_F \\
    &= 
    I_1 + I_2.
  \end{align*}
  We now bound separately $I_1$ and $I_2$. Due to~\eqref{eq:lh} and~\eqref{eq:matrix_property6} we have
  \begin{align}\label{eq:lh1}
        \|\basisMatrixEstimate^{-1}\|&= \frac{1}{\lambda_{\min}(\basisMatrixEstimate)}\le 2\lambda_{1}(\bW), \quad   \|\basisMatrix^{-1}\|=  \lambda_{1}(\bW).   
  \end{align}
   Using the fact that $\|{\bf A}^{-1}-{\bf B}^{-1} \|_F\le \|{\bf A}^{-1} \| \,\|{\bf B}^{-1} \|\,\|{\bf A}-{\bf B} \|_F$ with ${\bf A}=\basisMatrixEstimate$, ${\bf B}=\bP^{-1} \basisMatrix \orthMatrix=\tilde\bP \basisMatrix \orthMatrix$, inequality~\eqref{eq:lh1} and Corollary~\ref{corollary:consistencyBasis} we find
  \begin{align*}
    I_1
    =
    \|\empiricalLeftEigenvectors(\basisMatrixEstimate^{-1} - \orthMatrix^{\T}\basisMatrix^{-1}\bP)\|_F &\leq \|\basisMatrixEstimate^{-1} - \orthMatrix^{\T}\basisMatrix^{-1}\bP\|_F
    \\
    &\le \|\basisMatrixEstimate^{-1}\| \, \|\orthMatrix^{\T}\basisMatrix^{-1}\bP\| \, \|\basisMatrixEstimate - \tilde\bP \basisMatrix \orthMatrix\|_{F} \\
    &\leq 2 C_0 \ntopics^{1/2} \lambda_{1}^{2}(\documentTopicMatrix) \kappa( \bf \bW ) \errorAdjacency.
  \end{align*}
  On the other hand,
  \begin{align*}
    I_2 &= \|(\empiricalLeftEigenvectors - \probLeftEigenvectors\orthMatrix)[\bP^{-1} \basisMatrix\orthMatrix]^{-1}\|_F
    \leq \|\empiricalLeftEigenvectors - \probLeftEigenvectors\orthMatrix\|_F \, \|\basisMatrix^{-1}\|\\
    & \le 
    \frac{5\sqrt{2K}\kappa(\bPi)\|\bX - \bPi\|}{\lambda_{\ntopics}( \bPi)} \|\basisMatrix^{-1}\|, 
  \end{align*}
  where the last inequality follows from Corollary~\ref{lemma:eigenvectorConsistency}.
  Combining the above bounds we get 
  \begin{EQA}[c]
    \|\documentTopicMatrixEstimate - \documentTopicMatrix \bP\|_F \leq C \ntopics^{1/2}\lambda_{1}(\documentTopicMatrix) \left \{ \lambda_{1}(\documentTopicMatrix) \kappa( \bf \bW) \errorAdjacency +  
  \frac{\kappa(\bPi)\|\bX - \bPi\|}{\lambda_{\it K}( \bPi)}
    \right \}.
  \end{EQA}
  %

\subsection{Proof of Theorem~\ref{theorem:mainBound}}
\label{Appendix:proof-of-thm1}
  We apply Lemma~\ref{lemma:documenTopicMatrixBound} combined with the concentration inequalities of Lemma~\ref{lemma:spectralNormNoise}. 
  First, we check that Assumption~\ref{cond:community memberships} holds with probability at least $1 - 2 (n + p)^{-1}$. For $N\geq \log(n+p)$ we get from Lemma~\ref{lemma:spectralNormNoise} that  
  \begin{EQA}[c]
  \|\bX - \bPi\| \le 4 \sqrt{\dfrac{n\log(n+p)}{N}}
  \end{EQA}
  with probability at least $1 - 1 / (n + p)$, and   
  \begin{EQA}[c]
    \max_{1\le i\le n}\|\ev_{i}^{\T}(\bX - \bPi) \|_{2}  \le  5\sqrt{\frac{\log(\nsize + p)}{N}}
  \end{EQA}
  with probability at least $1-1/(n+p)$. 
   Notice also that $\max_{i} \|\ev_{i}^{\T} \bX\|_{2} = \max_{i} \sqrt{\sum_{j=1}^p X_{ij}^2} \le 
   \max_{i} \sqrt{\sum_{j=1}^p X_{ij}} = 1$. 
 Putting together the above remarks we deduce that, with probability at least \(1 - 2(\nsize+p)^{-1}\),
  \begin{EQA}\label{upper_bound_beta}
    \errorAdjacency
    &\le&
    5 \left\{\condNumber^2(\bPi) \sqrt{\ntopics n} + {\lambda_{\ntopics}(\bPi)}\right \}
    \dfrac{\sqrt{\log(n +p)}}{\lambda_{\ntopics}^2(\bPi)\sqrt{N}}
    \\
      &\leq&
      10 \condNumber^2(\bPi) \sqrt{\ntopics n} 
      \dfrac{\sqrt{\log(n +p)}}{\lambda_{\ntopics}^2(\bPi)\sqrt{N}}
  \end{EQA}
   where we have used the inequality  $\lambda_K(\bPi) \leq \sqrt{n/K}$ 
   proved in Lemma~\ref{lemma:matrix_eigen}. Since $\lambda_K(\bPi)$ is chosen to satisfy~\eqref{cond:lambda}
   we get that, with probability at least \(1 - 2(\nsize+p)^{-1}\),
    \begin{align*}
      \errorAdjacency &\leq  \frac{\bar C}{\lambda_{1}(\bW)\kappa(\bW)K\sqrt{K}}.
    \end{align*}
    Thus, on an event that has probability at least \(1 - 2(\nsize+p)^{-1}\), Assumption~\ref{cond:community memberships} is satisfied and we can apply Lemma~\ref{lemma:documenTopicMatrixBound}. This yields that, with probability at least \(1 - 2(\nsize+p)^{-1}\),
  \begin{align*}
    \min_{\bP\in \mathcal{P}}\bigl\|\documentTopicMatrixEstimate - \documentTopicMatrix \bP\bigr\|_{F}
    & \le
    C \ntopics^{1/2} \lambda_{1}(\documentTopicMatrix)\left \{ \lambda_{1}(\documentTopicMatrix) \kappa(\bW) \errorAdjacency + \frac{\kappa(\bPi)\|\bX - \bPi\|}{\lambda_{\ntopics}(\bPi)}\right \}
    \\
    &\le
    C \dfrac{\lambda_{1}(\documentTopicMatrix)\sqrt{n\ntopics \log(n+p)}}{\sqrt{N}\lambda_{\ntopics}(\bPi)} \left \{ \lambda_{1}(\documentTopicMatrix) \kappa(\bW) \dfrac{\ntopics^{1/2} \condNumber^2(\bPi)}{\lambda_{\ntopics}(\bPi)} 
    + \kappa(\bPi)\right \}
    \\
    &\le
    C \ntopics \sqrt{\dfrac{n \log(n + p)}{N}}\left(\dfrac{\lambda_{1}(\documentTopicMatrix)}{\lambda_{\ntopics}(\bPi)}\right)^2 \kappa(\bW) \kappa^2(\bPi),
  \end{align*}
  where we have used the inequalities   $\lambda_K(\bPi) \leq \sqrt{n / K}$ and $\lambda_{1}(\bPi) \leq \sqrt{K} \lambda_{1}(\bW)$ (see Lemma~\ref{lemma:matrix_eigen}).

\subsection{Proof of Theorem~\ref{theorem:mainBound-adapt} and Corollary~\ref{corollary:mainBound-adapt}}
 We will use the following lemma.
 \begin{lemma}\label{lem:thresholding}
 Let $\bPi \in \RR^{\nsize \t p}$ be a rank \(\ntopics\)
    matrix with smallest non-zero singular value \(\lambda_{\ntopics}(\bPi)\), and
    $\bX \in \RR^{\nsize \t p}$ be a  matrix such that
    \(\|\bX - \bPi\| \le \tau\) for some $\tau>0$. Let
    $
 \hat K = \max\{j: \ \lambda_j(\bX)>\tau \}.
 $
 If \(\lambda_{\ntopics}(\bPi)> 2\tau\) then  $\hat K =K$.
 \end{lemma}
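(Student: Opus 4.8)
The plan is to deduce everything from Weyl's inequality for singular values, which the paper already invokes as Theorem~C.6 in~\cite{Giraud-book}. Recall that this inequality gives $|\lambda_j(\bX) - \lambda_j(\bPi)| \le \|\bX - \bPi\|$ for every index $j$, where $\lambda_j$ denotes the $j$th largest singular value. Since the hypothesis guarantees $\|\bX - \bPi\| \le \tau$, I obtain the two-sided bound $|\lambda_j(\bX) - \lambda_j(\bPi)| \le \tau$ simultaneously for all $j$, and the whole argument reduces to tracking which perturbed singular values stay above the threshold~$\tau$.

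First I would handle the indices $j \le K$. Because the singular values are arranged in decreasing order and $\bPi$ has rank exactly $K$, we have $\lambda_j(\bPi) \ge \lambda_K(\bPi)$ for every such $j$. Combining this with Weyl's inequality and the assumption $\lambda_K(\bPi) > 2\tau$ yields
\begin{EQA}[c]
\lambda_j(\bX) \ge \lambda_j(\bPi) - \tau \ge \lambda_K(\bPi) - \tau > 2\tau - \tau = \tau,
\end{EQA}
so each of the first $K$ singular values of $\bX$ strictly exceeds $\tau$. This shows that $\hat K \ge K$.

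Next I would treat the indices $j > K$. Since $\bPi$ is a rank $K$ matrix, $\lambda_j(\bPi) = 0$ for all $j > K$, and Weyl's inequality then gives $\lambda_j(\bX) \le \lambda_j(\bPi) + \tau = \tau$. Hence none of these singular values is strictly larger than~$\tau$, which forces $\hat K \le K$. Putting the two bounds together yields $\hat K = K$, as claimed.

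There is no genuine obstacle here; the statement is a clean consequence of the stability of singular values under spectral-norm perturbations, and the factor~$2$ in the separation condition $\lambda_K(\bPi) > 2\tau$ is exactly what creates a gap straddling the threshold $\tau$ (the top $K$ values are pushed above it, the vanishing tail stays at or below it). The only point requiring a little care is to use the monotonicity $\lambda_j(\bPi) \ge \lambda_K(\bPi)$ for $j \le K$ rather than working index-by-index, so that the single separation assumption on $\lambda_K(\bPi)$ suffices to control all relevant indices at once.
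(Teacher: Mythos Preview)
Your proof is correct and follows essentially the same approach as the paper: both invoke Weyl's inequality to obtain $|\lambda_j(\bX)-\lambda_j(\bPi)|\le\tau$, then use $\lambda_j(\bPi)=0$ for $j>K$ to get $\hat K\le K$ and the separation $\lambda_K(\bPi)>2\tau$ to get $\hat K\ge K$. The only cosmetic difference is that the paper checks the single index $j=K$ (which suffices by monotonicity of singular values) while you treat all $j\le K$ explicitly.
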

 \begin{proof}
   By Weyl's inequality, we have $|\lambda_j(\bX)-\lambda_j(\bPi)|\le \tau$ for all $j$. Since $\lambda_j(\bPi)=0$ for $j\ge K+1$ we deduce that $\hat K \le K$.
   On the other hand, $\hat K \ge K$. Indeed, condition \(\lambda_{\ntopics}(\bPi)> 2\tau\) implies that $\lambda_K(\bX)\ge \lambda_K(\bPi)- |\lambda_j(\bX)-\lambda_j(\bPi)|>\tau$.
 \end{proof}
 
 Theorem~\ref{theorem:mainBound-adapt} is obtained by combining Theorem~\ref{theorem:mainBound} with Lemma \ref{lem:thresholding}.
 Indeed, notice that the bound of Theorem~\ref{theorem:mainBound} is proved on the event $\mathcal{A}:=\Big\{\left\Vert \bX-\bPi\right\Vert\le 4\sqrt{\frac{n\log(n+p)}{N}}\Big\}$.  
Set $\tau=  4\sqrt{\frac{n\log(n+p)}{N}}$. It follows from Lemma \ref{lem:thresholding} that if 
\begin{equation}\label{condition:thresh}
  \lambda_K(\bPi)> 8\sqrt{\frac{n\log(n+p)}{N}}
\end{equation}
then on the event $\mathcal{A}$ we have $\hat K =K$. But condition \eqref{condition:thresh} is implied by \eqref{cond:lambda} and \eqref{cond:lambda-adapt}. Therefore, the proof of Theorem~\ref{theorem:mainBound} goes through verbatim if we replace $K$ by $\hat K$. This yields Theorem~\ref{theorem:mainBound-adapt}.
 Corollary~\ref{corollary:mainBound-adapt} is deduced from Theorem~\ref{theorem:mainBound-adapt} in the same way as Corollary~\ref{corollary:mainBound} was deduced from Theorem~\ref{theorem:mainBound}. 
 

\subsection{Proof of Theorem~\ref{theorem:lower_bound}}
\label{sec:proof-of-lower-bound}
  
We use the techniques of proving minimax lower bounds based on a reduction to the problem of testing multiple hypotheses \cite[Chapter 2]{tsybakov2008introduction}. The hypotheses correspond to probability measures $\mathbb{P}_{\bPi^{(j)}}$, where $\bPi^{(j)}=\bW^{(j)}\bA$ with carefully chosen matrix $\bA$ and matrices $\bW^{(j)}$, $j=0,1,\dots,T$. The construction of these matrices borrows some elements from the proofs of the lower bounds in \cite{Ke2017,bing2018fast}. An additional subtlety is related to the fact that we need to grant Assumption \ref{assumption_eigenvalues} on the singular values. 
Without loss of generality we assume that $n$ is a multiple of $K$ and that $K$ is even. 

\smallskip

  1. {\it Construction of the set of matrices $\bW^{(j)}$.}\\
  We first introduce the basic matrix $\bW^{(0)}$ and then define matrices $\bW^{(j)}$, $j=1,\dots,T$ as slightly perturbed versions of $\bW^{(0)}$.
  
  Let $\bD_1$ be a $n \times K$ matrix composed of $n/K$ blocks, each of which is the identity matrix $\bI_K$ of size $K$:
  \begin{align*}
    \bD_1^\T 
    &= \left[ 
    \begin{array}{c|c|c|c}
         \bI_K & \bI_K & \rule{1cm}{0.1pt} & \bI_K
    \end{array}
    \right].
  \end{align*}
  We have that $\bD_1^\T \bD_1 = (n/K)\bI_K$ and $\sigma(\bD_1) = \{\sqrt{n/K},0\}$, where $\sigma(\bD_1)$ denotes the set of singular values of $\bD_1$.
  Set $$\gamma_1 = \frac{1}{4\,K}$$
  and define the  $n \times K$ matrix $\bD_2$ by the relation
  \begin{align*}
    \bD_2^\T = \gamma_1 \left[
    \begin{array}{c|c}
         \mathbf{0}_{K,K} & \mathbf{1}_{K,(n-K)}
    \end{array}
    \right]
  \end{align*}
  where we denote by $\mathbf{1}_{n,p}$ (respectively, $\mathbf{0}_{n,p}$) the $n \times p$ matrix with all entries $1$ (respectively, $0$). Then, $\bD_2^\T \bD_2 = (n-K)\gamma_1^2 \mathbf{1}_{K,K}$ and $\sigma(\bD_2) = \{\gamma_1 \sqrt{K(n-K)},0\}$. 
  We will further consider the matrix $\bD_3 = \bD_1 + \bD_2$ given by the relation
  \begin{align*}
    \bD_3^\T &= \left[ 
    \begin{array}{l|l|l|l}
        1~ \cdots ~0 & (1+\gamma_1) \phantom{1+1}\gamma_1 ~\phantom{1+1}\dots ~
        \phantom{1++}\gamma_1\phantom{1+} & \dots \dots & (1+\gamma_1) \phantom{1+1}\gamma_1 ~\phantom{1+1}\dots ~
        \phantom{1++}\gamma_1\phantom{1+} \\
        \vdots \phantom{\cdots \cdots} \vdots & 
        \phantom{1+}\gamma_1 ~\phantom{1+1}(1+\gamma_1) ~\phantom{+}\dots\phantom{1++} ~\gamma_1\phantom{1} & \dots \dots & \phantom{1+}\gamma_1 ~\phantom{1+1}(1+\gamma_1) ~\phantom{+}\dots\phantom{1++} ~\gamma_1\phantom{1} \\
        \vdots \phantom{\cdots \cdots} \vdots & \phantom{1+}\vdots ~\phantom{1+1}\phantom{(1+\gamma_1)} ~\phantom{+\dots\phantom{1++}} ~~\vdots\phantom{1} & \dots \dots & \phantom{1+}\vdots ~\phantom{1+1}\phantom{(1+\gamma_1)} ~\phantom{+\dots\phantom{1++}} ~~\vdots\phantom{1} \\
        0~ \cdots ~1 & \phantom{1+}\gamma_1 \phantom{1+1+1}\gamma_1\phantom{1+1} \dots \phantom{1+}(1+\gamma_1) & \dots \dots & \phantom{1+}\gamma_1 \phantom{1+1+1}\gamma_1\phantom{1+1} \dots \phantom{1+}(1+\gamma_1)
    \end{array}
    \right] \\
    &= \left[ 
    \begin{array}{l|l|l|l}
         \bI_K & \bI_K + \gamma_1 \mathbf{1}_{K,K} & \dots & \bI_K + \gamma_1 \mathbf{1}_{K,K}.
    \end{array}
    \right].
  \end{align*}
  Applying Weyl's inequality~\cite[Theorem C.6]{Giraud-book}, we get
  \begin{align*}
    \left\{\begin{array}{l}
         \lambda_1(\bD_3) \leq \sqrt{n/K} + \frac{1}{4}\sqrt{(n-K)/K} \leq \frac{5}{4}\sqrt{n/K},   \\
         \lambda_K(\bD_3) \geq \sqrt{n/K} - \frac{1}{4}\sqrt{(n-K)/K} \geq \frac{3}{4}\sqrt{n/K}.
    \end{array}
    \right.
  \end{align*}
  Finally, the basic matrix $\bW^{(0)}$ is defined by the relation
  \begin{align*}
    ({\bW^{(0)}})^\T &= \bD_3^\T - \left[\begin{array}{l|l|l|l}
         \mathbf{0}_{K,K} &{K}\gamma_1 \bI_K & \dots & {K}\gamma_1 \bI_K
    \end{array}\right] \\
    &= \left[ 
    \begin{array}{l|l|l|l}
         \bI_K & (1-{K}\gamma_1)\bI_K + \gamma_1 \mathbf{1}_{K,K} & \dots & (1-{K}\gamma_1)\bI_K + \gamma_1 \mathbf{1}_{K,K}
    \end{array}
    \right].
  \end{align*}
  Clearly, $\bW^{(0)}$ satisfies Assumption \ref{ass:anchorDoc}, all entries of $\bW^{(0)}$ are non-negative and its rows sum up to $1$.
  Applying Weyl's inequality to matrix $\bW^{(0)}$ yields
  \begin{align}\label{eq:eqqq}
    \left\{\begin{array}{l}
         \lambda_1(\bW^{(0)}) \leq \frac{5}{4}\sqrt{n/K} +{ K\gamma_1(\sqrt{n/K-1})} \leq 
    \frac{3}{2}
    \sqrt{n/K},   \\
         \lambda_K(\bW^{(0)}) \geq \frac{3}{4}\sqrt{n/K} -{ K\gamma_1(\sqrt{n/K-1})} \geq 
         \frac{1}{2}
         \sqrt{n/K},
    \end{array}
    \right.
  \end{align}
  implying that $\kappa(\bW^{(0)})\leq 3$. 
  
  Our next step is to define the matrices $\bW^{(j)}, \ j=1,\dots, T$. Consider the set of binary sequences
  \begin{align*}
    M = \{0,1 \}^{K(n-K)/2}.
  \end{align*}
  Applying the Varshamov-Gilbert bound~\cite[Lemma 2.9]{tsybakov2008introduction} we get that there exist $w^{(j)} \in M$, $j = 1, \dots, T$, such that:
  \begin{align}
  \label{eq:eq1}
    \|w^{(i)}-w^{(j)}\|_1 = \|w^{(i)}-w^{(j)}\|_2^2 \geq \frac{K(n-K)}{16}, \text{ for any } 0 \leq i \neq j \leq T,
  \end{align}
  with $w^{(0)}=0$ and
  \begin{align}
  \label{eq:eqlogT}
    \log T \geq \frac{\log 2}{16} K(n-K).
  \end{align}
  We divide each $w^{(j)}$ into $(n-K)$ chunks as $w^{(j)} = (w_1^{(j)}, w_2^{(j)},\dots,w_{n-K}^{(j)})$ with $w_i^{(j)} \in \{0,1\}^{K/2}$. Next, for each $w_i^{(j)}$, we introduce its augmented counterpart defined as $\tilde{w}_i^{(j)} = (w_i^{(j)},-w_i^{(j)}) \in \{-1,0,1\}^{K}$. 
  In what follows, we
   set
  \begin{align*}
    \gamma = c_* \sqrt{\frac{N}{K(N-K)^2}},
  \end{align*}
  where $c_*>0$ is a small enough absolute constant.
 For $1\leq j\leq T$, define the $(n - K) \times K$ matrix $\Omega^{(j)}$ and the $n \times K$ matrix $\Delta^{(j)}$ as follows: 
  \begin{align*}
      \Omega^{(j)}= \gamma \begin{bmatrix}\tilde{w}_1^{(j)} \\ \tilde{w}_2^{(j)} \\ \vdots \\ \tilde{w}_{n-K}^{(j)}\\ \end{bmatrix} \text{ and } \Delta^{(j)} = \left[\begin{array}{l}
          \mathbf{0}_{K,K} \\
          \hline
          \Omega^{(j)}
      \end{array}\right].
  \end{align*}
  Note that all the entries of $(\Delta^{(j)})^\T \Delta^{(j)}$ are bounded in absolute value by $\gamma^2(n-K)$, which yields
  {
  \begin{align*}
     \| \Delta^{(j)} \|&=
     \sqrt{\|(\Delta^{(j)})^\T \Delta^{(j)}\|}\leq \sqrt{\|(\Delta^{(j)})^\T \Delta^{(j)}\|_F}
    \leq \gamma \sqrt{(n-K)K}.
  \end{align*}
  }
  Thus, choosing $c^*$ small enough and using the assumption that $N\geq 2K$ we obtain
  \begin{align*}
    \| \Delta^{(j)} \|  \leq \frac{1}{4}\sqrt{n/K}.
  \end{align*}
  Now, for $1\leq j\leq T$, we define $\bW^{(j)}$ as
  \begin{align}
  \label{eq:eqdefW}
    \bW^{(j)} = \bW^{(0)} + \Delta^{(j)}.
  \end{align}
  It is easy to check that, for each $1\leq j \leq T$, the rows of $\bW^{(j)}$ are probability vectors if $c^*$ is chosen small enough,
  and $\bW^{(j)}$ satisfies Assumption~\ref{ass:anchorDoc}.
  Moreover, using~\eqref{eq:eqqq} and applying Weyl's inequality once again, we obtain
  \begin{align}
  \label{eq:eq10}
    \left\{\begin{array}{l}
         \lambda_1(\bW^{(j)}) \leq 
    \frac{7}{4}\sqrt{n/K},   \\
         \lambda_K(\bW^{(j)}) \geq \frac{1}{4}\sqrt{n/K},
    \end{array}
    \right.
  \end{align}
   so that $\kappa(\bW^{(j)})\leq 7$.
  
  \smallskip
    
  2. {\it Constructing matrix $\bA$ and checking the fact that $\bPi^{(j)}\in \mathcal{M}$, $j=0,1,\dots, T$.} \\  
  Assume that $p$ is a multiple of $K$ (if it is not the case the definition of $\bA$ should be modified by adding a block of zeros of the size of the residual).
  Define the following block matrix:
  \begin{align*}
    \bA^{0}=\{\underbrace{\be_{1}, 0_K, \ldots, 0_K}_{ p/K }, \underbrace{\be_{2},0_K, \ldots, 0_K}_{p/K }, \ldots, \underbrace{\be_{K},0_K, \ldots, 0_K}_{ p/K }\} \in \mathbb{R}^{K\times p},
  \end{align*}
  where $(\be_1,\dots,\be_K)$ is the canonical basis of $\mathbb{R}^K$ and $0_K\in \mathbb{R}^K$ is the vector with all entries 0. 
  Define
  \begin{align*}
    \bA: = \frac{N-K}{N}\bA^0 + \frac{K}{pN}\mathbf{1}_{K,p}.
  \end{align*}
  All entries of $\bA$ are non-negative and the rows of $\bA$ sum up to $1$.  We have that $\sigma\left(\frac{N-K}{N} \bA^0\right) =\left\{\frac{N-K}{N},0\right\}$ and $\sigma\left(\frac{K}{pN} \mathbf{1}_{K,p}\right)=\left\{\frac{K^{3/2}}{\sqrt{p}N},0\right\}$.  
   Using the assumption that $K\leq p/4$ and
  Weyl's inequality we get
  \begin{align}
  \label{eq:eq11}
    \left\{\begin{array}{l}
      \lambda_1(\bA) \leq \frac{N-K}{N} + \frac{K^{3/2}}{\sqrt{p}N} \leq 1,  \\
         \lambda_K(\bA) \geq \frac{N-K}{N} - \frac{K^{3/2}}{\sqrt{p}N} \geq 1/4,
    \end{array}
    \right.
  \end{align}
  which implies that $\kappa(\bA)\leq 4$. 
  
  For $0\leq j \leq T$, define $\bPi^{(j)} = \bW^{(j)}\bA$.
  Using Lemma
  \ref{lem:eigenvalue_minoration}, \eqref{eq:eqqq}, \eqref{eq:eq10} and \eqref{eq:eq11} we obtain
  \begin{align}\label{lower_bound_minSV}
   \lambda_K(\bPi^{(j)}) = \lambda_K(\bW^{(j)}\bA) & \geq \lambda_K(\bW^{(j)})\lambda_K(\bA)\geq \frac1{16}\sqrt{n/K}.
  \end{align}
   It follows from \eqref{eq:eqqq}, \eqref{eq:eq10} and \eqref{lower_bound_minSV} that the first inequality in Assumption \ref{assumption_eigenvalues} is satisfied for $\bW =\bW^{(j)}$ and  $\bPi= \bPi^{(j)} = \bW^{(j)}\bA$, $j=0,1,\dots, T$. Next, using the first inequality in \eqref{lower_bound_minSV} and the fact that
   $\lambda_1(\bW^{(j)}\bA) \leq \lambda_1(\bW^{(j)}) \lambda_1(\bA)$ yields
  \begin{align}
  \label{eq:ineqkappa}
    \kappa(\bW^{(j)} \bA) \leq \kappa(\bW^{(j)})\kappa(\bA)\leq C.
  \end{align}
  Thus,  Assumption \ref{assumption_eigenvalues} is satisfied for $\bW =\bW^{(j)}$ and  $\bPi= \bPi^{(j)} = \bW^{(j)}\bA$, $j=0,1,\dots, T$. In conclusion, we have proved that $\bPi^{(j)}\in \mathcal{M}$, $j=0,1,\dots, T$.

\smallskip

  To prove Theorem \ref{theorem:lower_bound}, we now use Theorem 2.5 in~\cite{tsybakov2008introduction}, according to which the lower bounds \eqref{inf_bnd_l2} and \eqref{inf_bnd_l1} hold if the following conditions are satisfied:
  \begin{itemize}
    \item[(a)] $\text{KL}(\mathbb{P}_{\bPi^{(j)}},\mathbb{P}_{\bPi^{(0)}}) \leq \frac{\log T}{16}$, for each $j = 1, \dots, T$, where $\text{KL}(\mathbb{P},\mathbb{Q})$ denotes the Kullback-Leibler divergence between the probability measures $\mathbb{P}$ and $\mathbb{Q}$.
      
    \item[(b)] For $0\leq j < \ell \leq T$ we have $\min_{\mathbf{P}\in \mathcal{P}} \| \bW^{(\ell)}- \bW^{(j)}\mathbf{P}\|_F \geq c \sqrt{\frac{n}{N}}$ and
     $\min_{\mathbf{P}\in \mathcal{P}}\| \bW^{(j)} - \bW^{(\ell)}\mathbf{P}\|_1 \geq  c\, n \sqrt{\frac{K}{N}}.$
    where  $\mathcal{P}$ is the set of all permutation matrices and $c$ is a positive constant.
    
    \item[(c)]  The maps $(\bM_1,\bM_2) \mapsto \min_{\mathbf{P} \in \mathcal{P}}\| \bM_1 - \bM_2 \mathbf{P}\|_{F}$ and $(\bM_1,\bM_2) \mapsto \min_{\mathbf{P} \in \mathcal{P}}\| \bM_1 - \bM_2 \mathbf{P}\|_{1}$ are semi-distances.
  \end{itemize}
  The rest of the proof is devoted to checking that these conditions (a) -- (c) are indeed satisfied.
  
  \medskip
   
   3. {\it Proof of (a).}\\
  Our aim now is to derive an upper bound on the Kullback-Leibler divergence between $\mathbb{P}_{\bPi^{(j)}}$ and $\mathbb{P}_{\bPi^{(0)}}$, where 
  $$ \bPi^{(j)} = \bW^{(j)}\bA\quad \text{and}\quad  \bPi^{(0)} = \bW^{(0)}\bA.$$
  To shorten the notation,  we set 
  $$\alpha:= \frac{N-K}{N} + \frac{K}{pN}, \qquad \beta := \frac{K}{pN}.$$
  For any $1\leq i \leq n$, $1\leq \ell \leq p$, we have
  $\Pi_{i\ell}^{(0)} = \sum_{k=1}^K W_{ik}^{(0)}A_{k\ell}$.
  If $i\ge K+1$, for the entries in the $i$th row of matrix  $\bPi^{(0)}$ the following holds.
  \begin{itemize}
      \item For the columns $\ell$ such that $(\ell-1)$ is a multiple of $p/K$:
        \begin{itemize}
            \item $\Pi_{i\ell}^{(0)}$ takes once the value $\alpha + (K-1)\gamma_1(\beta-\alpha)$,
            \item $\Pi_{i\ell}^{(0)}$ takes $K-1$ times the value $\beta + \gamma_1(\alpha-\beta)$.
        \end{itemize}
    \item For all other columns: $\Pi_{i\ell}^{(0)}\in \{ \alpha, \beta \}$.  
  \end{itemize}
  \newpar
  
On the other hand,  for any $1\leq j \leq T$, by the definition of $\bW^{(j)}$ in~\eqref{eq:eqdefW} we have 
  \begin{align*}
    \bPi^{(j)}&= \bPi^{(0)} + \Delta^{(j)} \\
    &= \bPi^{(0)}
    +
    \left[
      \begin{array}{c}
        \mathbf{0}_{K, p}  \\
        \hline
        \Omega^{(j)}\bA 
      \end{array}
    \right].
  \end{align*}
  Therefore, for any $1 \leq \ell \leq p$, if $i\leq K$, $\Pi_{i\ell}^{(j)}=\Pi_{i\ell}^{(0)}$, and if $i \ge K+1$, $\Pi_{i\ell}^{(j)}=\Pi_{i\ell}^{(0)} + \Delta_{i \ell}^{(j)}$, where 
   \begin{align*}
    \Delta_{i \ell}^{(j)} = \gamma\left[\sum_{k=1}^{K/2 } w_{i-K}^{(j)}(k) A_{k\ell}-\sum_{k=K/2 +1}^{K} w_{i-K}^{(j)}(k-K/2 ) A_{k\ell}\right]~.
  \end{align*}
  If $i \ge K+1$, for the entries in the $i$th row of matrix $\Delta^{(j)}$ the following holds.
  \begin{itemize}
      \item For the columns $\ell$ such that $(\ell-1)$ is a multiple of $p/K$:
  \begin{itemize}
      \item $\Delta_{i \ell}^{(j)}$ is $K/2$ times equal to $\gamma(\alpha-\beta)$,
      \item $\Delta_{i \ell}^{(j)}$ is $K/2$ times equal to $-\gamma(\alpha-\beta)$.
  \end{itemize}
  \item For all other $\ell$:  $\Delta_{i \ell}^{(j)}=0$.
  \end{itemize}
  
We are now ready to bound the Kullback-Leibler divergence between $\mathbb{P}_{\bPi^{(j)}}$ and $\mathbb{P}_{\bPi^{(0)}}$. Denote by $M_p(N,q)$ the multinomial distribution with parameters $(N,q)$ where $q$ is a probability vector in $\mathbb{R}^p$.  We recall that the Kullback-Leibler divergence between two multinomial distributions  $M_p(N,q_1)$ and $M_p(N,q_2)$ is equal to $N \sum_{\ell=1}^p q_{1\ell} \log \left( q_{1\ell}/q_{2\ell} \right)$. Hence, we have
 \begin{align*}
     \text{KL}\left(\mathbb{P}_{\bPi^{(j)}},\mathbb{P}_{\bPi^{(0)}} \right) &= N \sum_{i=1}^n\sum_{\ell=1}^p \Pi_{i \ell}^{(j)} \log \left(\frac{ \Pi_{i \ell}^{(j)} }{ \Pi_{i \ell}^{(0)} }\right) \\
     &= N \sum_{i=K+1}^n\sum_{\ell=1}^p \Pi_{i \ell}^{(j)} \log \left(\frac{ \Pi_{i \ell}^{(j)} }{ \Pi_{i \ell}^{(0)} }\right) \\
     & = N \sum_{i=K+1}^n\sum_{\ell=1}^p\left(\Pi_{i \ell}^{(0)} + \Delta_{i \ell}^{(j)}  \right) \log \left(1+\frac{ \Delta_{i \ell}^{(j)} }{\Pi_{i \ell}^{(0)}}\right) \\
     & \leq N \sum_{i=K+1}^n \sum_{\ell=1}^p \left( \Delta_{i \ell}^{(j)}  + \frac{(\Delta_{i \ell}^{(j)})^2}{\Pi_{i \ell}^{(0)}} \right).
 \end{align*}
 Note that, by construction,  $\sum_{\ell=1}^p \Delta_{i \ell}^{(j)} = 0$. Therefore,
 \begin{align*}
     \text{KL}\left(\mathbb{P}_{\bPi^{(j)}},\mathbb{P}_{\bPi^{(0)}} \right) & \leq N\sum_{i=K+1}^n \sum_{\ell=1}^p \frac{(\Delta_{i \ell}^{(j)})^2}{\Pi_{i \ell}^{(0)}} \\
     & = N\sum_{i=K+1}^n \  \sum_{(\ell-1) \text{ multiple of } p/K} \ \frac{(\Delta_{i \ell}^{(j)})^2}{\Pi_{i \ell}^{(0)}} \\
     & \leq N \sum_{i=K+1}^n \  \sum_{(\ell-1) \text{ multiple of } p/K} \ \frac{\gamma^2 (\alpha-\beta)^2}{\Pi_{i \ell}^{(0)}} \\
     & \leq \frac{4c^*}{K} \sum_{i=K+1}^n \  \sum_{(\ell-1) \text{ multiple of } p/K}  \frac{(N-K)^2}{N^2\,\Pi_{i \ell}^{(0)}} \\
     & \leq \frac{4c^*}{K} \sum_{i=K+1}^n \left[\frac{3N}{N-K} + \frac{2(K-1)KN}{N-K} \right] \\
     & \leq  \frac{c}{K} (n-K)\frac{K^2N}{N-K} \\
     & \leq c K(n-K) \\
     & \leq \frac{\log T}{16},
 \end{align*}
where we have used \eqref{eq:eqlogT} and we have chosen   $c^*$ small enough, such that the constant $c$ in the penultimate line does not exceed $(\log 2)/256$.

\smallskip

4. {\it Proof of (b).}  \\
Note that for any $0\leq j < \ell \leq T$ we have $\min_{\mathbf{P}\in \mathcal{P}}\|\bW^{(\ell)}-\bW^{(j)}\mathbf{P}\|_F = \|\bW^{(\ell)}-\bW^{(j)}\|_F$ since the first $K$ rows are the same for matrices $W^{(\ell)}$ and $W^{(j)}$. Then,
  \begin{align*}
    \| \bW^{(j)} - \bW^{(\ell)}\|_F^2 &= \| \Omega^{(j)} - \Omega^{(\ell)}\|_F^2
    = \sum_{i=1}^{n-K} \|\Omega_{i \cdot}^{(j)} - \Omega_{i \cdot}^{(\ell)} \|_2^2 \\
    & = 2\gamma^2 \sum_{i = 1}^{n-K} \| w_i^{(j)} - w_i^{(\ell)}\|_2^2
    = 2 \gamma^2 \| w^{(j)}-w^{(\ell)}\|_2^2 \\
    & \geq \frac{\gamma^2}{8} K (n - K) \quad \text{(using~\eqref{eq:eq1})}\\ 
    &= \frac{c_*^2}{8} \frac{N (n - K)}{(N - K)^2}\geq c\frac{n}{N} \quad \text{(since $K\le n/2$)}, \numberthis \label{eq:eq6}
  \end{align*}
  which proves (b) for the Frobenius norm. Quite analogously, for the $\ell_1$-norm we get 
  \begin{align*}
    \| \bW^{(j)} - \bW^{(\ell)}\|_1 &= \| \Omega^{(j)} - \Omega^{(\ell)}\|_1
    = \sum_{i=1}^{n-K} \|\Omega_{i \cdot}^{(j)} - \Omega_{i \cdot}^{(\ell)} \|_1 \\
   & \geq  2\gamma \|w^{(j)}-w^{(\ell)}\|_1 \\
    & \geq  c\, n \sqrt{\frac{K}{N}}.
  \end{align*}

\smallskip

5. {\it Proof of (c).}  \\
  We now prove that the map $(\bM_1,\bM_2)\mapsto \min_{\mathbf{P}\in \mathcal{P}}\| \bM_1 - \bM_2 \mathbf{P}\|_F$ satisfies the triangle inequality. For any matrices $\bM_1,\bM_2,\bM_3$, we have
  \begin{align*}
      \min_{\mathbf{P}\in \mathcal{P}}\| \bM_1 - \bM_2\mathbf{P} \|_F &= \min_{\mathbf{P,P'}\in \mathcal{P}}\| \bM_1\mathbf{P'} - \bM_2 \mathbf{P}\|_F \\
      &\leq \min_{\mathbf{P,P'}\in \mathcal{P}}\left( \|\bM_1\mathbf{P'} -\bM_3 \|_F+\|\bM_3-\bM_2 \mathbf{P} \|_F\right) \\
      &= \min_{\mathbf{P'}\in \mathcal{P}} \| \bM_1\mathbf{P'} - \bM_3 \|_F + \min_{\mathbf{P}\in \mathcal{P}} \|\bM_3 - \bM_2\mathbf{P} \|_F \\
      &= \min_{\mathbf{P'}\in \mathcal{P}} \| \bM_1 - \bM_3\mathbf{P'} \|_F + \min_{\mathbf{P}\in \mathcal{P}} \|\bM_3 - \bM_2\mathbf{P} \|_F.
  \end{align*}
The same calculation holds with the $\ell_1$-norm in place of the Frobenius norm. This completes the proof of Theorem~\ref{theorem:lower_bound}.
  

\section{Auxiliary lemmas}
\begin{lemma}
\label{lemma:matrix_eigen0}
  Let Assumption~\ref{ass:anchorDoc} be satisfied. Then,
  \begin{align}
  \label{eq:matrix_property3a}
       \lambda_{K}(\bW) &\ge 1. 
  \end{align}
  If, in addition, $\lambda_K(\bPi)>0$ then the matrix $\bf U$ of left singular vectors of $\bPi$ can be represented in the form~\eqref{UWH}, where {\bf H} is a rank $K$ matrix with singular values 
  \begin{align}\label{eq:matrix_property6}
    \lambda_{1}({\bf H}) = \frac{1}{\lambda_{K}(\bW)}, \qquad \lambda_{\min}({\bf H}) = \lambda_{K}({\bf H})= \frac{1}{\lambda_{1}(\bW)},
  \end{align} 
  and the condition number satisfying
  \begin{align}
    \label{eq:matrix_property4}
    \kappa({\bf H}) &= \kappa(\bW).
  \end{align}
\end{lemma}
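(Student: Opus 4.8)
The plan is to exploit the combinatorial structure forced by Assumption~\ref{ass:anchorDoc}: collecting one anchor document per topic yields an index set $J$ of cardinality $K$ for which $\bW_J$ is a permutation matrix, since each such row equals a canonical basis vector $\be_k$. From this single observation both the singular-value bound and the factorization will follow.

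First I would prove~\eqref{eq:matrix_property3a}. Writing $\bW^\T \bW = \bW_J^\T \bW_J + \sum_{i\notin J} W_i W_i^\T$, the second sum is positive semidefinite and $\bW_J^\T \bW_J = \bI_K$ because $\bW_J$ is a permutation matrix, hence orthogonal. Consequently $\bW^\T\bW \succeq \bI_K$, so that $\lambda_K^2(\bW) = \lambda_{\min}(\bW^\T\bW)\ge 1$, giving $\lambda_K(\bW)\ge 1$. In particular $\bW$ has full column rank $K$.

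Next, for the factorization~\eqref{UWH} I would argue at the level of column spaces. Since $\lambda_K(\bPi)>0$, the matrix $\bPi$ has rank exactly $K$, and a standard property of the SVD~\eqref{svc:Pi} gives $\mathrm{col}(\probLeftEigenvectors)=\mathrm{col}(\bPi)$. The identity $\bPi=\bW\bA$ shows $\mathrm{col}(\bPi)\subseteq\mathrm{col}(\bW)$, and since both subspaces are $K$-dimensional (the latter by the full-rank conclusion above), they coincide. Therefore each column of $\probLeftEigenvectors$ lies in $\mathrm{col}(\bW)$, which furnishes the matrix ${\bf H}=(\bW^\T\bW)^{-1}\bW^\T\probLeftEigenvectors$ satisfying $\probLeftEigenvectors=\bW{\bf H}$; because both $\probLeftEigenvectors$ and $\bW$ have rank $K$, the matrix ${\bf H}$ is invertible.

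Finally, to pin down the singular values of ${\bf H}$ I would use the orthonormality $\probLeftEigenvectors^\T\probLeftEigenvectors=\bI_K$. Substituting $\probLeftEigenvectors=\bW{\bf H}$ yields ${\bf H}^\T(\bW^\T\bW){\bf H}=\bI_K$, whence $\bW^\T\bW=({\bf H}{\bf H}^\T)^{-1}$. Comparing spectra, the eigenvalues $\{\lambda_i^2(\bW)\}$ of the left-hand side equal the eigenvalues $\{1/\lambda_i^2({\bf H})\}$ of the right-hand side, but with the ordering reversed. Matching the largest of each gives $\lambda_1^2(\bW)=1/\lambda_K^2({\bf H})$, and matching the smallest gives $\lambda_K^2(\bW)=1/\lambda_1^2({\bf H})$, which is exactly~\eqref{eq:matrix_property6}; dividing these two relations produces $\kappa({\bf H})=\lambda_1(\bW)/\lambda_K(\bW)=\kappa(\bW)$, establishing~\eqref{eq:matrix_property4}. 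The argument is essentially routine linear algebra once the permutation-submatrix observation is in place; the only point demanding slight care is the order reversal when matching the spectra of $\bW^\T\bW$ and $({\bf H}{\bf H}^\T)^{-1}$, which is precisely what sends $\lambda_1(\bW)$ to $\lambda_{\min}({\bf H})$ and vice versa.
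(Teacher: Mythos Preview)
Your proof is correct and follows essentially the same route as the paper: the anchor submatrix gives $\bW_J^\T\bW_J=\bI_K$ for~\eqref{eq:matrix_property3a}, and the singular-value identities come from the orthonormality $\probLeftEigenvectors^\T\probLeftEigenvectors=\bI_K$. The only cosmetic difference is that the paper constructs ${\bf H}$ explicitly as $\bA\probRightEigenvectors\probEigenvalues^{-1}$ from the SVD of $\bPi$ (and then, like you, rewrites it as $(\bW^\T\bW)^{-1}\bW^\T\probLeftEigenvectors$), whereas you obtain ${\bf H}$ directly from the equality of column spaces; both yield the same matrix since $\bW$ has full column rank.
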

\begin{proof}
  Let $J^*
  \subseteq \{1,\dots,n\}$ be the set of $K$ row indices of $\bW$ corresponding to anchor documents. By Assumption~\ref{ass:anchorDoc} we have $\bW_{J^*}=\bI_K$. Hence, 
  \begin{align*}
    \lambda_{K}(\bW) &=\min_{\|a\|_2=1}  \|\bW a\|_2 \ge \min_{\|a\|_2=1} \|\bW_{J^*} a\|_2 =1,
  \end{align*}
  which proves~\eqref{eq:matrix_property3a}. Next, if $\lambda_K(\bPi)>0$ then matrix ${\bf L}$ is positive definite and we define ${\bf H}:= \bA {\bf V}{\bf L}^{-1}$. In view of~\eqref{svc:Pi} we have $\bW {\bf H} = \bPi {\bf V}{\bf L}^{-1} = {\bf U}$, which yields~\eqref{UWH}. We now prove that ${\bf H}$ is non-degenerate. Indeed, \eqref{eq:matrix_property3a} implies that matrix $\bW^\T \bW\in \mathbb{R}^{K\times K}$ is positive definite, so that ${\bf H} =(\bW^\T \bW)^{-1}\bW^\T {\bf U}$. Then for the minimal singular value $\lambda_{\min}({\bf H})$ of matrix ${\bf H}$ we have
  \begin{align*}
    \lambda_{\min}({\bf H}) &=\min_{\|a\|_2=1}  \|(\bW^\T \bW)^{-1}\bW^\T {\bf U} a\|_2 
    \\
    &\ge  \min_{x\in \mathbb{R}^n: \|x\|_2=1} \|(\bW^\T \bW)^{-1}\bW^\T x\|_2 = \frac{1}{\lambda_{1}(\bW)}>0.
  \end{align*}
  Thus, ${\bf H}$ is non-degenerate and we can write $\bW = {\bf U}{\bf H}^{-1}$ implying~\eqref{eq:matrix_property6}. 
  Equality~\eqref{eq:matrix_property4} is an immediate consequence of~\eqref{eq:matrix_property6}.
\end{proof}
\begin{lemma}
\label{lemma:matrix_eigen}
  Let $\bW, \bA$ and $\bPi=\bW\bA$ be matrices with non-negative entries satisfying~\eqref{factoriz1}. 
  Then the singular values of matrices \(\documentTopicMatrix\) and \(\bPi\) satisfy the  inequalities 
    \begin{align}
      \lambda_K(\bPi) &\leq \sqrt{n/K}, \label{eq:matrix_property1}\\
      \sqrt{n/K} \le &\ \lambda_{1}(\bW) \leq \sqrt{n}, \label{eq:matrix_property2}\\
           \lambda_{1}(\bPi) &\leq \sqrt{K}\lambda_{1}(\bW). \label{eq:matrix_property3}
    \end{align}
  \end{lemma}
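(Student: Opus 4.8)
The plan is to derive all three inequalities from two elementary observations: that a nonnegative matrix whose rows are probability vectors has a controlled Frobenius norm, and that $\bPi = \bW\bA$ has rank at most $K$, combined with submultiplicativity of the spectral norm.

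First I would record the Frobenius norm bounds. Since every entry $M_{ij}$ of a nonnegative matrix whose rows sum to one lies in $[0,1]$, we have $M_{ij}^2 \le M_{ij}$, and summing over all entries gives $\|\bM\|_F^2 \le \sum_{i,j} M_{ij}$, which equals the number of rows. Applied to the three matrices and using~\eqref{factoriz1}, this yields $\|\bW\|_F^2 \le n$, $\|\bA\|_F^2 \le K$, and $\|\bPi\|_F^2 \le n$. The upper bound $\lambda_{1}(\bW) \le \sqrt n$ in~\eqref{eq:matrix_property2} is then immediate from $\lambda_{1}(\bW) \le \|\bW\|_F \le \sqrt n$.

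For the matching lower bound $\lambda_{1}(\bW) \ge \sqrt{n/K}$, I would use the test vector $x = \mathbf 1_K/\sqrt K$, where $\mathbf 1_K$ denotes the all-ones vector in $\RR^K$: because each row of $\bW$ sums to one we have $\bW \mathbf 1_K = \mathbf 1_n$, hence $\|\bW x\|_2 = \|\mathbf 1_n\|_2/\sqrt K = \sqrt{n/K}$, and since $\|x\|_2 = 1$ it follows that $\lambda_{1}(\bW) = \|\bW\| \ge \|\bW x\|_2 = \sqrt{n/K}$, completing~\eqref{eq:matrix_property2}. For~\eqref{eq:matrix_property1}, I would exploit the rank constraint: $\bPi$ has at most $K$ nonzero singular values, so $K\,\lambda_K^2(\bPi) \le \sum_{j=1}^K \lambda_j^2(\bPi) = \|\bPi\|_F^2 \le n$, giving $\lambda_K(\bPi) \le \sqrt{n/K}$. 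Finally, for~\eqref{eq:matrix_property3}, submultiplicativity gives $\lambda_{1}(\bPi) = \|\bW\bA\| \le \|\bW\|\,\|\bA\| = \lambda_{1}(\bW)\lambda_{1}(\bA)$, and combining this with $\lambda_{1}(\bA) \le \|\bA\|_F \le \sqrt K$ yields $\lambda_{1}(\bPi) \le \sqrt K\,\lambda_{1}(\bW)$.

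None of the steps presents a genuine obstacle; the only point requiring care is the lower bound in~\eqref{eq:matrix_property2}, where one must select the correct test vector $\mathbf 1_K/\sqrt K$ and use the row-sum normalization $\bW\mathbf 1_K = \mathbf 1_n$, rather than a crude comparison of $\lambda_1(\bW)$ with an averaged Frobenius norm, which would lose a factor of $\sqrt K$.
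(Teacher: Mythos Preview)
Your proof is correct and follows essentially the same approach as the paper: the Frobenius-norm bound via $M_{ij}^2\le M_{ij}$, the test vector $\mathbf 1_K/\sqrt K$ for the lower bound on $\lambda_1(\bW)$, the rank-$K$ argument for $\lambda_K(\bPi)$, and submultiplicativity combined with $\|\bA\|_F\le\sqrt K$ are exactly the steps the paper uses.
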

  \begin{proof}
    Inequality~\eqref{eq:matrix_property1} follows from the fact that
    \begin{align*}
      K\lambda_K^2(\bPi) \leq \lambda_1^2(\bPi)+\dots+\lambda_K^2(\bPi) = \|\bPi\|_F^2 \leq n.
    \end{align*}
    Next, using~\eqref{factoriz1} we obtain 
    \begin{EQA}[c]\label{eq:matrix_property5}
      \lambda_{1}(\bW) \leq \|\bW\|_F
      =
      \sqrt{\sum_{i = 1}^{\nsize} \sum_{k = 1}^{\ntopics} W_{ik}^2}
      \leq
      \sqrt{\sum_{i = 1}^{\nsize} \sum_{k = 1}^{\ntopics} W_{ik}}
      =
      \sqrt{\nsize}.
    \end{EQA}
    On the other hand, for $a=(1/\sqrt{K},\dots,1/\sqrt{K})^{\rm T}\in \mathbb{R}^K$ we have
    \begin{align*}
      \lambda_{1}(\bW) \ge   \|\bW a\|_2 = \sqrt{\sum_{i = 1}^{\nsize} \frac1K \left(\sum_{k = 1}^{\ntopics} W_{ik}\right)^2}=\sqrt{\frac{n}{K}}.
    \end{align*}
   Quite similarly to~\eqref{eq:matrix_property5}, using~\eqref{factoriz1} we get $\|\bA\|=\lambda_{1}(\bA)\leq \|\bA\|_F \le \sqrt{K}$, which implies~\eqref{eq:matrix_property3}:
    \begin{EQA}
      \|\bPi\|&=&\|\bW\bA\|\leq \|\bW\|\|\bA\|\leq \sqrt{K}\lambda_{1}(\bW).
    \end{EQA}
%
  \end{proof}
  
  {
  \begin{lemma}
  \label{lem:eigenvalue_minoration}
  Let $K\le \min(n,p)$. For any two matrices $\bW\in \mathbb{R}^{n\times K}$ and $\bA\in \mathbb{R}^{K\times p}$ we have %
  \begin{align}\label{lower_bound_minSVD}
    \lambda_K(\bW\bA) & \geq \lambda_K(\bW)\lambda_K(\bA).
  \end{align}
  \end{lemma}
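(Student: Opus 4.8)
The plan is to obtain the bound directly from the Courant--Fischer max-min characterization of singular values, exploiting the fact that the inner dimension of the product $\bW\bA$ is exactly $K$, so that $\lambda_K(\bW)$ is the \emph{smallest} singular value of $\bW$ and $\lambda_K(\bA)$ is the \emph{smallest} singular value of $\bA$.

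First I would recall the variational formula: for any matrix $\bM\in\mathbb{R}^{n\times p}$ and any $k\le\min(n,p)$,
\begin{align*}
  \lambda_k(\bM)=\max_{\substack{S\subseteq\mathbb{R}^p\\ \dim S=k}}\ \min_{\substack{x\in S\\ \|x\|_2=1}}\|\bM x\|_2.
\end{align*}
Applying this with $k=K$ to the two matrices $\bW\bA\in\mathbb{R}^{n\times p}$ and $\bA\in\mathbb{R}^{K\times p}$ expresses both $\lambda_K(\bW\bA)$ and $\lambda_K(\bA)$ as a maximum over the \emph{same} collection of $K$-dimensional subspaces $S\subseteq\mathbb{R}^p$, which is exactly what makes the two quantities comparable term by term.

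Second, since $\bW$ has only $K$ columns, $\lambda_K(\bW)$ is its least singular value, whence $\|\bW z\|_2\ge\lambda_K(\bW)\,\|z\|_2$ for every $z\in\mathbb{R}^K$. Taking $z=\bA x$ gives the pointwise inequality $\|\bW\bA x\|_2\ge\lambda_K(\bW)\,\|\bA x\|_2$ for all $x\in\mathbb{R}^p$. Consequently, for every $K$-dimensional subspace $S$ one has $\min_{x\in S,\,\|x\|_2=1}\|\bW\bA x\|_2\ge\lambda_K(\bW)\min_{x\in S,\,\|x\|_2=1}\|\bA x\|_2$, and passing to the maximum over $S$ on both sides yields $\lambda_K(\bW\bA)\ge\lambda_K(\bW)\lambda_K(\bA)$, which is~\eqref{lower_bound_minSVD}.

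The argument is short, so there is no serious obstacle; the only point requiring care is matching dimensions so that $\lambda_K$ genuinely denotes the smallest singular value in each case. Specifically, one must use that $\bW\in\mathbb{R}^{n\times K}$ has exactly $K$ singular values, so the bound $\|\bW z\|_2\ge\lambda_K(\bW)\|z\|_2$ holds for \emph{all} $z\in\mathbb{R}^K$ (not merely on a subspace), and that the $K$th singular value of $\bA\in\mathbb{R}^{K\times p}$ is realized by the same max-min over $K$-dimensional subspaces of $\mathbb{R}^p$ that appears for $\bW\bA$. As a sanity check one could instead verify the bound through the identity $\lambda_K(\bA)=\min_{u\in\mathbb{R}^K,\,\|u\|_2=1}\|\bA^\T u\|_2$, but the Courant--Fischer route above is the most transparent.
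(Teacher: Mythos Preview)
Your proof is correct and follows essentially the same route as the paper: both use the Courant--Fischer max--min characterization of $\lambda_K$ together with the pointwise bound $\|\bW z\|_2\ge\lambda_K(\bW)\|z\|_2$ for all $z\in\mathbb{R}^K$, then take the maximum over $K$-dimensional subspaces of $\mathbb{R}^p$. Your formulation is arguably slightly cleaner, since by bounding $\|\bW\bA x\|_2\ge\lambda_K(\bW)\|\bA x\|_2$ directly you avoid the paper's division by $\|\bA y\|_2$ and the accompanying case distinction on whether $\lambda_K(\bA)>0$.
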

  \begin{proof} We consider only the case $\lambda_K(\bA)>0$ since otherwise \eqref{lower_bound_minSVD} is trivial.
  By Courant-Fischer min-max formula (see, e.g.,  \cite[Theorem C.3]{Giraud-book}) we have
  \begin{align*}
    \lambda_K(\bW\bA) &= \max_{S: {\rm dim}(S)=K}\,\min_{y \in S\setminus \{0\} } \frac{\|\bW\bA y\|_2}{\|y \|_2}, 
  \end{align*}
  where the maximum is taken over all linear spans $S$ of $K$ vectors in $\mathbb{R}^p$. Since $\lambda_K(\bA)>0$ and $\bA y \in \mathbb{R}^K$ we can write
  \begin{align*}
    \lambda_K(\bW\bA) &= \max_{S: {\rm dim}(S)=K}\,\min_{y \in S\setminus \{0\} } \frac{\|\bW\bA y\|_2}{\|\bA y \|_2} \frac{\|\bA y\|_2}{\|y \|_2}\\
    & \ge  \min_{x \in \mathbb{R}^K\setminus \{0\} } \frac{\|\bW x\|_2}{\|x \|_2} \max_{S: {\rm dim}(S)=K}\,\min_{y \in S\setminus \{0\} }  \frac{\|\bA y\|_2}{\|y \|_2} \\
    & = \lambda_K(\bW)\lambda_K(\bA).
  \end{align*}
  \end{proof}

 \begin{lemma}
\label{lem:anchor_word}
Let
$\bA^{0}$ be a matrix with the following block structure:
  \begin{align*}
    \bA^{0}=\big[\alpha_1\be_{1},  \ldots, \alpha_K \be_{K},\underbrace{0_{K},\ldots,0_{K}}_{p-K}\big] \in \mathbb{R}^{K\times p},
  \end{align*}
  where $(\be_1,\dots,\be_K)$ is the canonical basis of $\mathbb{R}^K$, $\alpha_i\in (0,1)$ and $0_K\in \mathbb{R}^K$ is the vector with all entries 0. Let $$\bA=\bP_1(\bA^{0}+\bA^1)\bP_2\quad \text{and}\quad \bPi=\bW\bA$$ where $\bP_1, \bP_2$ are permutation matrices, $\Vert \bA^1\Vert \leq \beta$, and $\bW\in \mathbb{R}^{n\times K}$. If $\underset{1\le i\le K}{\min}\;\alpha_i- \beta\geq C$ then $\lambda_{\ntopics}(\bPi) \ge  C\lambda_{\ntopics}(\bW)$.
\end{lemma}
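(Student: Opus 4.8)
The plan is to reduce the claim to the submultiplicativity bound of Lemma~\ref{lem:eigenvalue_minoration} by first controlling $\lambda_{\ntopics}(\bA)$ from below. The starting observation is that $\bA^0$ is, apart from its last $p-K$ zero columns, the diagonal matrix $\diag(\alpha_1,\dots,\alpha_K)$: indeed $\bA^0 (\bA^0)^\T = \sum_{i=1}^{K}\alpha_i^2\, \be_i \be_i^\T = \diag(\alpha_1^2,\dots,\alpha_K^2)$, so the nonzero singular values of $\bA^0$ are exactly $\alpha_1,\dots,\alpha_K$ and consequently $\lambda_{\ntopics}(\bA^0) = \min_{1\le i\le K}\alpha_i$.

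First I would invoke Weyl's inequality \cite[Theorem C.6]{Giraud-book} to pass from $\bA^0$ to the perturbed matrix $\bA^0 + \bA^1$. Since $\|\bA^1\|\le \beta$, this gives
\[
  \lambda_{\ntopics}(\bA^0 + \bA^1) \ge \lambda_{\ntopics}(\bA^0) - \|\bA^1\| \ge \min_{1\le i\le K}\alpha_i - \beta \ge C .
\]
Next, because $\bP_1\in\mathbb{R}^{K\times K}$ and $\bP_2\in\mathbb{R}^{p\times p}$ are permutation matrices, hence orthogonal, left- and right-multiplication by them leaves every singular value unchanged; therefore
\[
  \lambda_{\ntopics}(\bA) = \lambda_{\ntopics}\bigl(\bP_1(\bA^0+\bA^1)\bP_2\bigr) = \lambda_{\ntopics}(\bA^0+\bA^1) \ge C .
\]

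Finally I would apply Lemma~\ref{lem:eigenvalue_minoration} to the product $\bPi = \bW\bA$, yielding
\[
  \lambda_{\ntopics}(\bPi) = \lambda_{\ntopics}(\bW\bA) \ge \lambda_{\ntopics}(\bW)\,\lambda_{\ntopics}(\bA) \ge C\,\lambda_{\ntopics}(\bW),
\]
which is the desired conclusion. There is essentially no hard step here: the argument is a short chain of standard facts — the explicit singular values of a diagonal block, Weyl's perturbation bound, orthogonal invariance of singular values, and the submultiplicativity lemma already established. The only point requiring a little care is the bookkeeping of dimensions, namely checking that $\bP_1$ and $\bP_2$ act on the correct sides and are genuinely orthogonal of sizes $K$ and $p$ respectively, so that the invariance of the singular values under these multiplications is legitimate.
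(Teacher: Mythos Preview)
Your proposal is correct and follows exactly the same approach as the paper's proof: identify the singular values of $\bA^{0}$, apply Weyl's inequality to bound $\lambda_{\ntopics}(\bA)$ from below by $\min_i\alpha_i-\beta\ge C$, and then invoke Lemma~\ref{lem:eigenvalue_minoration}. The only difference is that you spell out explicitly the orthogonal invariance under the permutation matrices, which the paper uses silently when writing $\lambda_K(\bA)=\lambda_K(\bA^{0}+\bA^{1})$.
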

\begin{proof}
Matrix $\bA^0$ has $K$ top non-zero singular values $\alpha_1,\ldots,\alpha_K$. 
Using Weyl's inequality (see, e.g.,  \cite[Theorem C.6]{Giraud-book}) we get
\[\lambda_K(\bA)=\lambda_K(\bA^{0}+\bA^1)\geq \underset{1\le i\le K}{\min}\;\alpha_i-\beta\geq C.\]
Combining this inequality with \eqref{lower_bound_minSVD} yields the result.
\end{proof}}

\bibliography{topicModels}
\bibliographystyle{plain}


\begin{appendices}

\section{Additional Experiments: Estimation of  topic-word matrix}


\label{sec:topic_word_experiments}
  In this section, we investigate the SPOC estimator of topic-word matrix $\bA$ using the sequence of experiments on synthetic data {similar to those} of Section~\ref{sec:simulations}. Figures~\ref{fig:A_fig_n_1}-\ref{fig:A_fig_K_1} below present the results of simulations with different values of parameters $n,p,N$ and the number of topics $K$. {The generation of matrices $\bW$ and $\bA$ was performed in the same way as in Section~\ref{sec:simulations}.} For each value on the $x$-axes of the figures, we present the averaged result over 10 simulations.
  
  Our objective was to assess the effect of each of parameters $n,p,N,K$ on the Frobenius error between $\bA$ and the estimator $\hat{\bA}$ derived from SPOC algorithm via~\eqref{A_estimate}. For comparison, we provide the same simulation study for the LDA estimator of $\bA$. The experiments show that the SPOC estimator is very competitive with LDA while being more stable.
  One may also notice that the Frobenius error of both SPOC and LDA estimators has a behavior similar to the optimal rates for  estimation of matrix $\bA$ under $\ell_1$-error derived in~\cite{Ke2017,bing2018fast}. Indeed, the estimation error is decreasing as function of $n$ and $N$ and it is increasing as function of $p$ and $K$.

  \begin{figure}[t]
    \centering
    \includegraphics[scale=0.3]{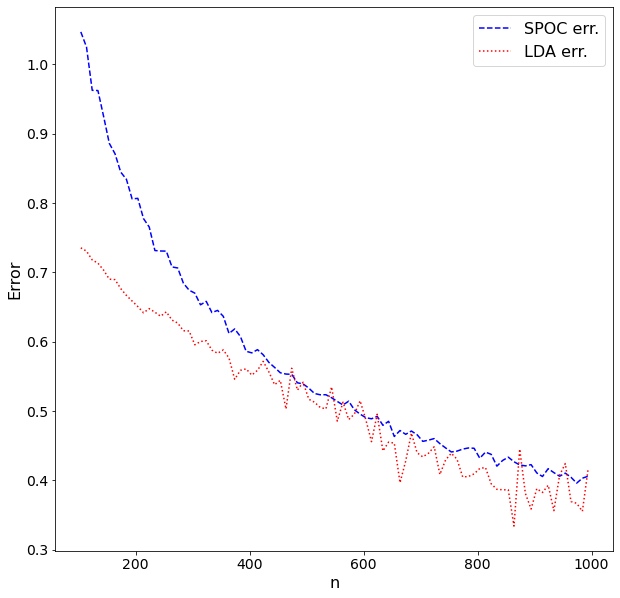}
    \caption{The $n$-dependency of the Frobenius error of $\hat {\bA}$ 
    using SPOC and LDA algorithms. Total number of words $p=5000$, number of sampled words in each document $N=200$. 
    }
  \label{fig:A_fig_n_1}
  \end{figure}
  
  \begin{figure}[H]
    \centering
    \includegraphics[scale=0.3]{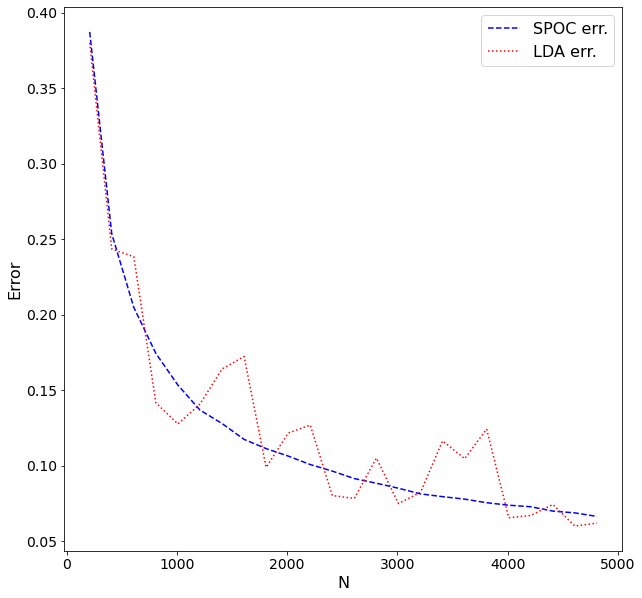}
    \caption{The $N$-dependency of the Frobenius error of $\hat {\bA}$ 
    using SPOC and LDA algorithms. Total number of words $p=5000$, number of documents $n=1000$.
    }
  \label{fig:A_fig_N_1}
  \end{figure}

  \begin{figure}[H]
    \centering
    \includegraphics[scale=0.3]{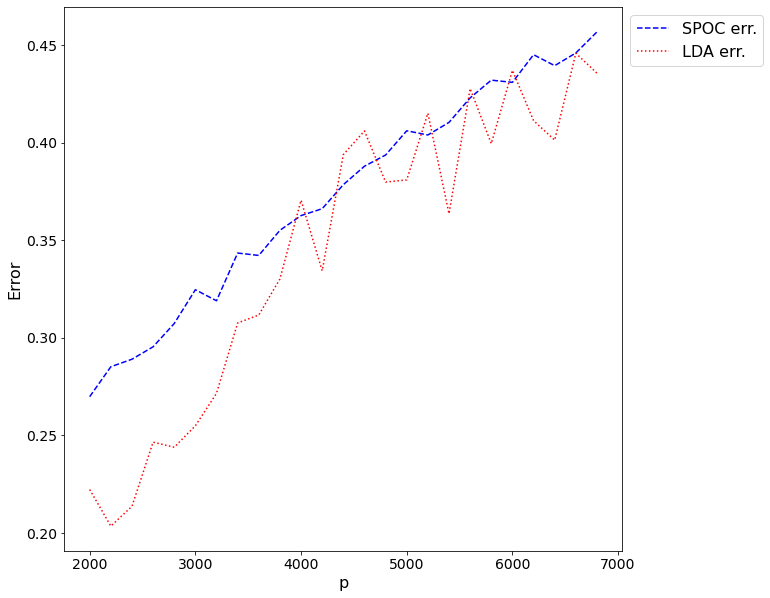}
    \caption{The $p$-dependency of the Frobenius error of $\hat {\bA}$ 
    using SPOC and LDA algorithms. Total number of words $n=1000$, number of sampled words in each document $N=200$.
    }
  \label{fig:A_fig_p_1}
  \end{figure}

  \begin{figure}[H]
    \centering
    \includegraphics[scale=0.3]{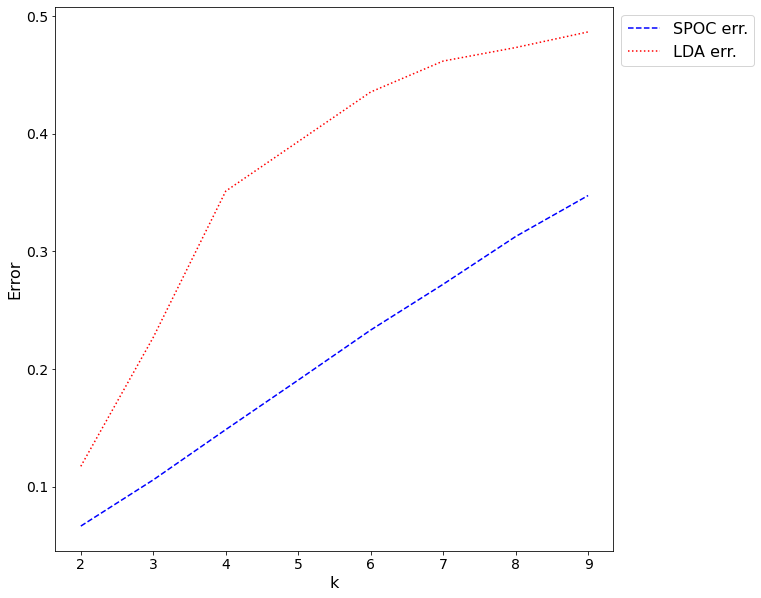}
    \caption{The $K$-dependency of the Frobenius error of $\hat {\bA}$ using SPOC and LDA algorithms. Total number of words $p=5000$, number of sampled words in each document $N=5000$, number of documents $n=1000$.
    }
  \label{fig:A_fig_K_1}
  \end{figure}

\section{Additional Experiments: Empirical study of singular values of word-document and topic-document matrices }\label{simulations_sv}
\label{sec:assumption_eigenvalues_check}
  Most conditions and assumptions used throughout the paper are satisfied for fairly general choices of parameters. However, Assumption~\ref{assumption_eigenvalues} enforces certain bounds on matrices $\bW$ and $\bPi$ which might seem restrictive. The goal of this section is to experimentally show that singular values and quotients appearing in Assumption~\ref{assumption_eigenvalues} admit reasonably small upper bounds.

  We consider  matrices \(\bW, \bPi\) and \(\bA\) generated in the following way. In most experiments we take $K=3$ and the matrix $\bW$ has the following structure: $K$ rows of $\bW$ are canonical basis vectors, each of the remaining $n-K$ rows is generated independently using the Dirichlet distribution with parameter $\alpha = (0.1,0.15,0.2)$. In the experiments where $K$ must vary, we define $\bW$ in a different way. Namely, for the $n-K$ rows that are not canonical basis vectors, each element \(W_{kj}\) is generated from the uniform distribution on $[0, 1]$ and then each row of the matrix is normalized to have \(\sum_{k = 1}^K W_{ik} = 1\). For the matrix $\bA$, we take $K$ columns proportional to canonical basis vectors with coefficients equal to random variables $U_k, k = 1, \dots, K$ uniformly distributed on $[0, 1]$. The elements \(A_{kj}\) of matrix \(\bA\) in the remaining $p-K$ columns are  obtained by generating numbers from the uniform distribution on $[0, 1]$ and then normalizing each row of the matrix to have \(\sum_{j = {K + 1}}^p A_{kj} = 1 - U_k, k=1,\dots,K\). The resulting matrix \(\bA\) has  normalized rows such that \(\sum_{j = 1}^p A_{kj} = 1\).

  We essentially use  the same parameters as in the experiments reported in Section~\ref{sec:simulations}. The dependencies of the condition numbers $\kappa(\bPi)$ and $\kappa(\bW)$ on parameters $n$, $p$ and $K$ are presented on Figures~\ref{fig:Pi_cond} and~\ref{fig:W_cond}. All the condition numbers have small to moderate values for a quite wide range of parameters $n$ and $p$, while the dependence on $K$ is stronger. Additionally, we study the ratio $\lambda_{1}(\bW) / \lambda_{\ntopics}(\bPi)$ also appearing in Assumption~\ref{assumption_eigenvalues}. As presented on Figure~\ref{fig:Pi_W_quotient} it shows the tendencies similar to the condition numbers.

  \begin{figure}[t]
    \centering
    \includegraphics[scale=0.5]{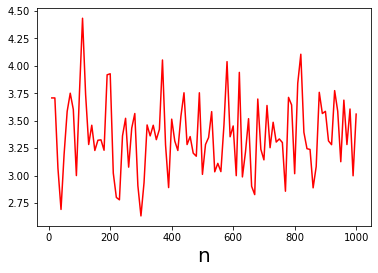}
    \includegraphics[scale=0.5]{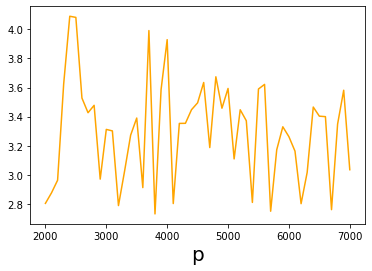}
    \includegraphics[scale=0.3]{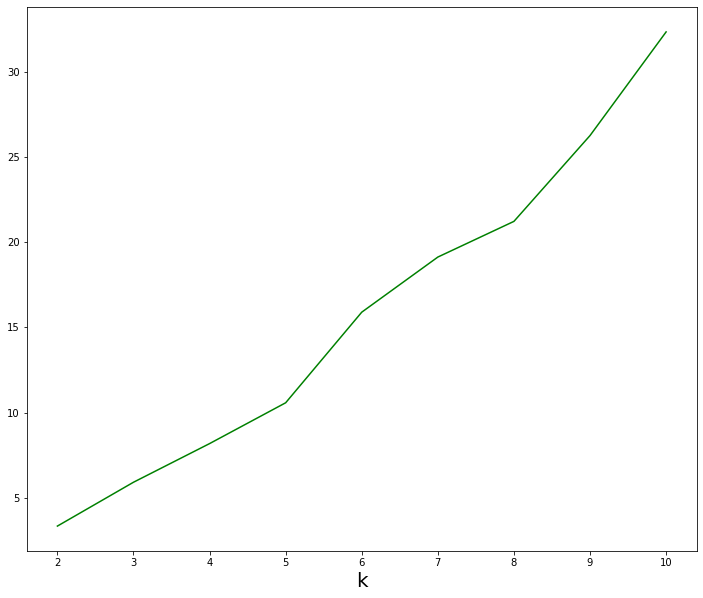}
    \caption{The dependency of $\kappa(\bPi)$ on parameters $n$, $p$ and $K$.}
  \label{fig:Pi_cond}
  \end{figure}

  \begin{figure}[t]
    \centering
    \includegraphics[scale=0.5]{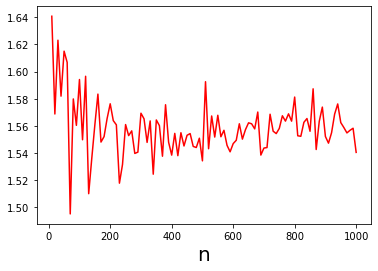}
    \hspace{30pt}
    \includegraphics[scale=0.3]{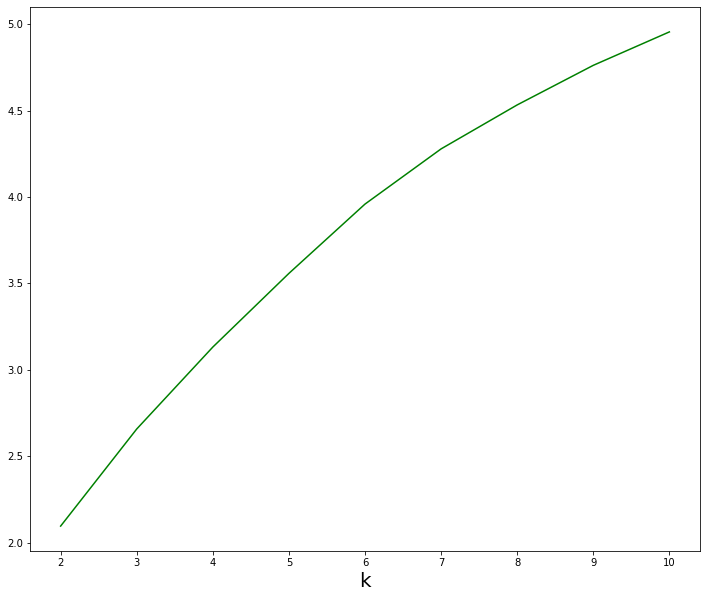}
    \caption{The dependency of $\kappa(\bW)$ on parameters $n$ and $K$.}
  \label{fig:W_cond}
  \end{figure}

  \begin{figure}[t]
    \centering
    \includegraphics[scale=0.30]{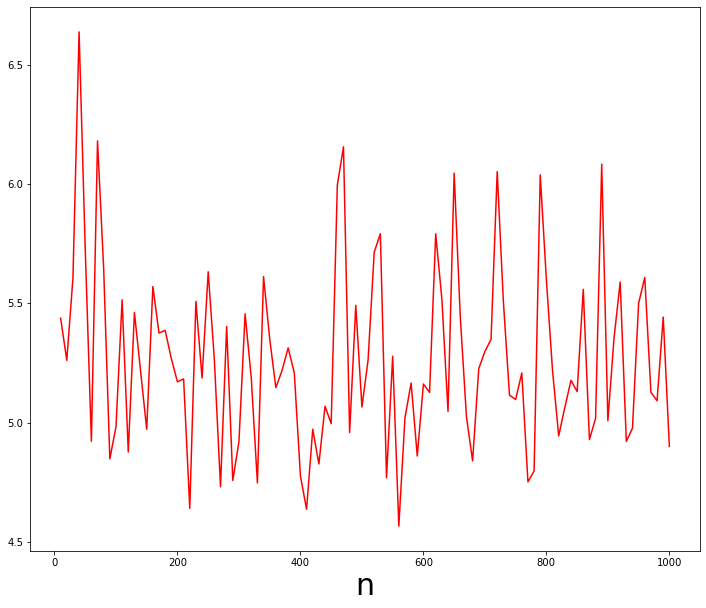}
    \includegraphics[scale=0.30]{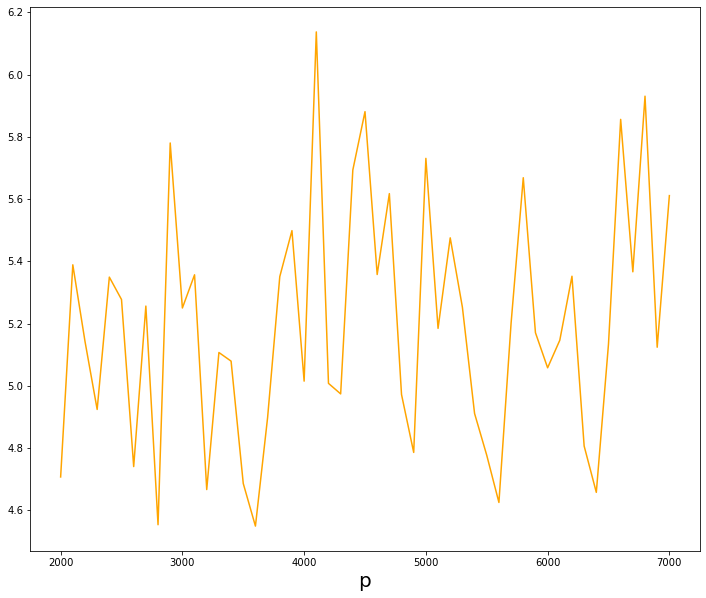}
    \includegraphics[scale=0.30]{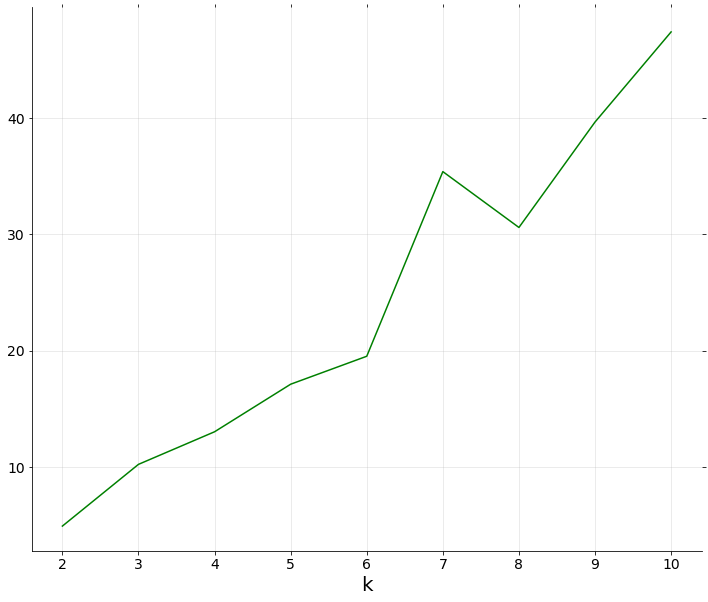}
    \caption{The dependency of $\lambda_{1}(\bW) / \lambda_{\ntopics}(\bPi)$ on parameters $n$, $p$ and $K$.}
  \label{fig:Pi_W_quotient}
  \end{figure}
\end{appendices}
\end{document}